\newcommand{\mathbold}{\mathbf}
\newtheorem{thm}{Theorem}[section]
\newtheorem{lem}[thm]{Lemma}
\newtheorem{prop}[thm]{Proposition}
\newtheorem{THM}{Theorem}
\theoremstyle{definition}
\newtheorem{rmk}[thm]{Remark}
\newtheorem{dfn}{Definition}
\newtheorem{qst}{Question}
\newtheorem{step}{Step}
\newcommand{\OO}{\mathscr{O}}
\newcommand{\EExt}{\mathscr{E}xt}
\newcommand{\HHom}{\mathscr{H}om}
\newcommand{\TTor}{\mathscr{T}or}
\newcommand{\sE}{\mathscr{E}}
\newcommand{\EE}{\sE}
\newcommand{\sH}{\mathscr{H}}
\newcommand{\sF}{\mathscr{F}}
\newcommand{\sK}{\mathscr{K}}
\newcommand{\sL}{\mathscr{L}}
\newcommand{\sU}{\mathscr{U}}
\newcommand{\sR}{\mathscr{R}}
\newcommand{\sC}{\mathscr{C}}
\newcommand{\sQ}{\mathscr{Q}}
\newcommand{\cV}{\mathscr{V}}
\newcommand{\sI}{\mathscr{I}}
\newcommand{\cN}{\mathscr{N}}
\newcommand{\sT}{\mathscr{T}}
\newcommand{\spi}{\mathscr{S}}
\DeclareMathOperator{\SL}{{\rm SL}}
\DeclareMathOperator{\id}{{\rm id}}
\DeclareMathOperator{\GL}{{\rm GL}}
\DeclareMathOperator{\Spin}{{\rm Spin}}
\DeclareMathOperator{\Sp}{{\rm Sp}}
\DeclareMathOperator{\rO}{{\rm O}}
\DeclareMathOperator{\s}{{\rm S}}
\DeclareMathOperator{\rk}{rk}
\DeclareMathOperator{\Ext}{Ext}
\DeclareMathOperator{\Hom}{Hom}
\DeclareMathOperator{\im}{Im}
\DeclareMathOperator{\coker}{coker}
\DeclareMathOperator{\HH}{H}
\DeclareMathOperator{\hh}{h}
\DeclareMathOperator{\Mo}{\mathrm{M}}
\DeclareMathOperator{\MI}{\mathrm{MI}}
\DeclareMathOperator{\DD}{\mathscr{D}}
\DeclareMathOperator{\Db}{D^b}
\DeclareMathOperator{\Pic}{Pic}
\DeclareMathOperator{\rP}{P}
\newcommand{\Z}{\mathbb Z}
\newcommand{\C}{\mathbb C}
\newcommand{\p}{\mathbb P}
\newcommand{\pd}{\check{\mathbb P}}
\newcommand{\G}{\mathbb{G}}
\newcommand{\bG}{\mathbf{G}}
\newcommand{\rG}{\mathrm{G}}
\newcommand{\LL}{\mathrm{L}}
\newcommand{\rL}{\mathrm{L}}
\newcommand{\RR}{\mathbold{R}}
\DeclareMathOperator{\ts}{\otimes}
\newcommand{\mono}{\hookrightarrow}
\newcommand{\xr}{\xrightarrow}
\newcommand{\ph}{\mathbf{\Phi}}
\newcommand{\phs}{\mathbf{\Phi^{*}}}
\newcommand{\phx}{\mathbf{\Phi^{!}}}
\newcommand{\eX}{{r_X}} \newcommand{\rX}{q_X}
\newcommand{\tra}{{\mathrm{t}}}
\newcommand{\tth}{{\mathrm{th}}}
\begin{document}


\title[Even and odd instantons on Fano threefolds]{Even and odd instanton bundles on \\ Fano threefolds of Picard number one}

\author{Daniele Faenzi}
\email{{\tt daniele.faenzi@univ-pau.fr}}
\address{Universit\'e de Pau et des Pays de l'Adour \\
 Avenue de l'Universit\'e - BP 576 - 64012 PAU Cedex - France}
\urladdr{{\url{http://univ-pau.fr/~faenzi/}}}

\thanks{The author was partially supported by ANR-09-JCJC-0097-0
  INTERLOW and ANR GEOLMI}
 \keywords{Instanton bundles, monads, Fano threefolds, semiorthogonal decomposition, nets of quadrics}
 \subjclass[2000]{Primary 14J60, 14F05.}


\sloppy

\begin{abstract}
  We consider an analogue of the notion of instanton bundle on
  the projective 3-space, consisting of a class of rank-$2$ vector
  bundles defined on smooth Fano threefolds $X$ of Picard number one,
  having even or odd determinant according to the parity of $K_X$.

  We first construct a well-behaved
  irreducible component of their moduli spaces.
  Then, when the intermediate Jacobian of $X$ is trivial, we look at
  the associated monads, hyperdeterminants and nets of quadrics.
  We also study one case where the intermediate
  Jacobian of $X$ is non-trivial, namely when $X$ is the intersection of two quadrics in $\p^5$,
  relating instanton bundles on $X$ to vector bundles of higher rank
  on a the curve of genus $2$ naturally associated with $X$.
\end{abstract}

\maketitle

\tableofcontents

\section*{Introduction}

Let $X$ be a smooth complex projective
threefold of Picard number $1$.
Let $H_X$ be the ample generator of the Picard group, and write:
\[
K_X = -i_X H_X,
\]
for some integer $i_X$.
Let us assume that $i_X$ is positive, so that $X$ is a {\it Fano} threefold.
Under these assumptions, each of the cohomology groups $\HH^{p,p}(X)$
is $1$-dimensional, more specifically $\HH^{2,2}(X)$ is generated by
the class $L_X$ 
of a line contained in $X$ (we will thus write the Chern classes of a
coherent sheaf on $X$ as integers).
We have $i_X \in \{1,2,3,4\}$, and $i_X=4$ implies $X=\p^3$, while
$i_X=3$ implies that $X$ is a smooth quadric hypersurface in $\p^4$.
In case $i_X=2$, the variety $X$ is called a {\it Del Pezzo} threefold,
while for $i_X=1$ the variety $X$ is called a
{\it prime} Fano threefold, and in this case its {\it genus $g_X$}
is defined as the genus of a
general double hyperplane section curve.

\medskip

Let now $F$ be a rank-$2$ algebraic vector bundle on $X$, and suppose that
$F$ is stable (with respect to $H_X$). Let us assume that the following conditions hold:
\[
  F \simeq F^* \otimes \omega_X, \qquad \HH^1(X,F)=0.
\]
Set $E$ for the twist $F(t)$ of $F$ having $c_1(E)=0$ or $-1$, and let
$c_2(E)=k$.
Clearly, when $i_X=4$, i.e. when $X=\p^3$, the sheaf $E=F(2)$ is an
instanton bundle on $\p^3$, as first described in
\cite{adhm}. 
By this reason we say that $E$ is a $k$-instanton on $X$.
We say that $E$ is an {\it even} or {\it odd instanton} depending on
the parity of $c_1(E)$, and we 
denote the (coarse) moduli space of these bundles by $\MI_X(k)$.
On $\p^3$, the moduli space of instanton bundles has long been
conjectured to be 
smooth and irreducible, and smoothness has indeed been announced recently
(see \cite{jardim-verbitsky}) together with irreducibility at least
for odd $c_2$ (see \cite{tikhomirov:odd}).
Also, irreducible instanton moduli have been recently proved to be
rational, see \cite{markushevich-tikhomirov:rationality}.

The purpose of this paper is to carry out a preliminary study of the
moduli space of 
$k$-instantons $E$ on Fano threefolds $X$ besides $\p^3$.
Our first result, based on previous work contained in
\cite{brafa1:arxiv}, is to extend the non-emptiness statement
for $\MI_X(k)$ to almost all Fano threefolds of Picard number $1$.
Namely, it holds for any $X$ if $i_X>1$ and, when $i_X=1$, under the
assumption that $-K_X$ is very ample (so $X$ is 
{\it non-hyperelliptic}) and that $X$ contains a line $L$ with normal
bundle $\OO_L \oplus \OO_L(-1)$ (so $X$ is {\it ordinary}).
\begin{THM} \label{esiste}
  The moduli space $\MI_X(k)$ has a generically smooth
  irreducible component whose dimension is the number $\delta$ below:
  \[
  \begin{tabular}[h]{c||c|c|c|c}
    $i_X$ & 4 & 3 & 2 & 1 \\
    \hline
    \hline
    $\delta$ & $8k-3$ & $6k-6$ & $4k-3$ & $2k-g_X-2$
  \end{tabular}
  \]
  and $\MI_X(k)$ is empty when $i_X=2,k=1$ and when $i_X=1, 2k<g_X+2$.
\end{THM}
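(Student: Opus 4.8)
The plan is to prove the three assertions separately: existence of a generically smooth irreducible component of dimension $\delta$ by an inductive construction together with deformation theory, and the two emptiness statements by restriction to a general hyperplane section. I would start with the deformation theory, which is the same for all $X$. For an arbitrary instanton $E$, since $X$ is Fano of Picard number one we have $\HH^{i}(X,\OO_{X})=0$ for $i>0$, hence $\Ext^{i}(E,E)\cong\HH^{i}(X,\EEnd E)\cong\HH^{i}(X,\EEnd_{0}E)$ for $i\ge1$; stability gives $\Ext^{0}(E,E)=\C$; and since $\EEnd_{0}E$ is $\mu$-semistable of slope $0$ while $i_{X}>0$, Serre duality gives $\Ext^{3}(E,E)\cong\HH^{0}(X,\EEnd_{0}E(-i_{X}))^{*}=0$. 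Thus every $[E]\in\MI_{X}(k)$ satisfies $\dim_{[E]}\MI_{X}(k)\ge\ext^{1}(E,E)-\ext^{2}(E,E)=1-\chi(E,E)$, with equality and smoothness precisely when $\Ext^{2}(E,E)=0$. A Hirzebruch--Riemann--Roch computation, using $\chi(\OO_{X})=1$ (equivalently $\int_{X}c_{1}(X)c_{2}(X)=24$) together with the fact that $\EEnd E$ has vanishing odd Chern character and $\mathrm{ch}_{2}(\EEnd E)=c_{1}(E)^{2}-4c_{2}(E)$, gives $\chi(E,E)=4-\tfrac12\int_{X}c_{1}(X)\cdot\bigl(4c_{2}(E)-c_{1}(E)^{2}\bigr)$, which in the four cases $i_{X}=1,2,3,4$ equals $1-\delta$. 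So it suffices to exhibit, for each $k$ in the stated range, one stable instanton $E$ with $c_{2}(E)=k$ and $\Ext^{2}(E,E)=0$: such an $E$ is a smooth point of $\MI_{X}(k)$, lying on a component of dimension $\ext^{1}(E,E)=\delta$.

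For this I would induct on $k$, the base case — a minimal instanton for each deformation family of $X$ — being provided by \cite{brafa1:arxiv}; this is where the hypotheses that $X$ be non-hyperelliptic and ordinary are used, the latter ensuring a line $L\subset X$ with $N_{L/X}\cong\OO_{L}\oplus\OO_{L}(-1)$ and a well-behaved Hilbert scheme of lines. For the step from $k$ to $k+1$, given a stable instanton $E$ of charge $k$ I would take a general such line $L$ and a general surjection $E\epi\OO_{L}$, form the torsion-free sheaf $\sF=\Ker(E\to\OO_{L})$, which satisfies $c_{1}(\sF)=c_{1}(E)$ and $c_{2}(\sF)=k+1$, and then check that $\sF$ is stable, satisfies the instanton cohomology vanishing, has $\Ext^{2}(\sF,\sF)=0$ (combining the Riemann--Roch identity above with a direct count of $\ext^{1}(\sF,\sF)$), and deforms to a locally free sheaf $E'$. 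Since being locally free, stable, instanton with $\Ext^{2}=0$ are open conditions, $E'$ is then a stable instanton of charge $k+1$ with $\Ext^{2}(E',E')=0$; as these bundles form an irreducible family (the charge-$k$ component being irreducible by induction, and the lines and modifications varying irreducibly), their closure is a generically smooth irreducible component of $\MI_{X}(k+1)$ of dimension $\delta$. I expect the real obstacle to be this construction: the existence of the minimal instanton in each family, and, in the inductive step, the smoothability of $\sF$ and the persistence of stability and $\HH^{1}$-vanishing after cutting out a line. On $\p^{3}$ this reduces to the classical theory of \relax't~Hooft bundles; over a general Fano threefold it requires the control of the Hilbert scheme of lines that ``ordinary'' provides.

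For the emptiness statements I would restrict a hypothetical instanton $E$ to a general surface $S\in|H_{X}|$. When $i_{X}=1$, $S$ is a K3 surface, $E|_{S}$ is $\mu$-semistable by the restriction theorem, and its Mukai vector $v=(2,-H_{S},g_{X}+1-k)$ is primitive because $H_{X}$ generates $\Pic(X)$; hence $E|_{S}$ is $\mu$-stable, so simple, whence $\ext^{1}(E|_{S},E|_{S})=2+\langle v,v\rangle\ge0$. Since $\langle v,v\rangle=4k-2g_{X}-6$, this forces $2k\ge g_{X}+2$, so $\MI_{X}(k)=\varnothing$ when $2k<g_{X}+2$. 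When $i_{X}=2$ and $k=1$, $S$ is a Del Pezzo surface; Riemann--Roch gives $\chi(E|_{S})=1$ while Serre duality and $\mu$-semistability give $\HH^{2}(S,E|_{S})=0$, so $\HH^{0}(S,E|_{S})\ne0$. But the instanton vanishing $\HH^{1}(X,E(-1))=0$ makes $\HH^{0}(X,E)\to\HH^{0}(S,E|_{S})$ surjective, hence $\HH^{0}(X,E)\ne0$, which contradicts the stability of $E$ (with $c_{1}(E)=0$). Therefore $\MI_{X}(1)=\varnothing$ for $i_{X}=2$.
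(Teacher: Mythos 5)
Your overall frame is sound and, for the existence part, it is essentially the paper's own strategy: the computation $1-\chi(E,E)=\delta$ via Riemann--Roch (your formula checks out in all four cases), the reduction to exhibiting a single instanton with $\Ext^2_X(E,E)=0$, and the induction on $k$ by elementary modification $0\to G\to E\to\OO_L\to 0$ along a line followed by deformation to a locally free sheaf is exactly the method of \cite{brafa1:arxiv} that the paper reruns for $i_X=2$ (Theorem \ref{exists}). Your emptiness arguments take a somewhat different route: for $i_X=1$ the paper simply quotes \cite{brafa1:arxiv}, while your K3/Mukai-vector computation $\langle v,v\rangle=4k-2g_X-6\ge -2$ does give exactly the bound $2k\ge g_X+2$; for $i_X=2$, $k=1$ the paper rules out all of $\Mo_X(2,0,1)$ by a Harder--Narasimhan analysis on the Del Pezzo surface, whereas you use the instanton vanishing to lift the section of $E|_S$ to $X$ --- weaker but sufficient for the statement.

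The genuine gap is the base of the induction. You defer ``a minimal instanton for each deformation family'' to \cite{brafa1:arxiv}, but that reference only treats prime Fano threefolds ($i_X=1$); the hypotheses ``ordinary, non-hyperelliptic'' are relevant only there. For $i_X=2$ the base case $k=2$ is a substantive construction that the paper carries out itself (Step 1 of Theorem \ref{exists}): one takes a non-degenerate elliptic curve of degree $d_X+2$, applies Hartshorne--Serre, proves $\Ext^2_X(E,E)=0$ via $\HH^1(D,\cN_{D,X})=0$, and --- crucially for starting your induction --- produces a line $L$ with $E|_L\simeq\OO_L^2$. None of this is supplied by your citation, and without the splitting condition on a line the inductive step cannot even be set up. Relatedly, for odd instantons ($i_X=1,3$) the restriction to a line can never be $\OO_L^2$ (odd degree), so the modification recipe must be run with the balanced splitting $\OO_L\oplus\OO_L(-1)$ and the attendant changes in the cohomological bookkeeping; note the paper avoids induction on the quadric altogether by building, for every $k$, 't~Hooft bundles from $k$ disjoint lines and checking $\Ext^2_Q(E,E)=0$ directly (Proposition \ref{thooft}). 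Finally, a smaller point in your K3 argument: primitivity of the Mukai vector does not imply $\mu$-stability of $E|_S$; what you need is that a general $S\in|H_X|$ has $\Pic(S)=\Z H_S$ (Moishezon), so that $\mu$-semistability with $c_1(E|_S)=-H_S$ forces $\mu$-stability, hence simplicity --- with that fix the inequality argument goes through.
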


Next, we focus on the case when the variety $X$ satisfies
$\HH^3(X)=0$, i.e. the intermediate Jacobian of $X$ is trivial.
This holds if the derived category $\Db(X)$ of coherent sheaves on $X$
admits a full
strongly exceptional collection, and a case-by-case analysis shows
that in fact the two conditions are equivalent.
Indeed, there are only $4$ classes of such varieties, one for each index, namely:
\begin{enumerate}[i)]
\item the projective space $\p^3$, for $i_X = 4$;
\item a  quadric hypersurface in $\p^4$, for $i_X = 3$;
\item a linear section $X=\p^6 \cap \bG(2,\C^5)\subset \p^9$, with $H_X^3=5$, for $i_X = 2$;
\item a prime Fano threefold $X \subset \p^{13}$ of genus $12$, in case $i_X = 1$.
\end{enumerate}
In all these case, we have:
\begin{equation}
  \label{Db}
\Db(X) = \big\langle \sE_0,\sE_1,\sE_2, \EE_3 \big\rangle,  
\end{equation}
for some vector bundles $\sE_i$, with $i=0,1,2,3$, defined on $X$.
Let $\rX$ and $\eX$, be the quotient and the remainder of
the division of $i_X$ by $2$.
It turns out that one can choose the $\EE_i$'s so that:
\begin{equation}
  \label{E0}
\EE_0 \simeq \OO_X(-\rX-\eX), \qquad \EE_3^*(-\eX) \simeq \EE_1, \qquad \EE_2^*(-\eX) \simeq \EE_2.  
\end{equation}
Set $U=\Hom_X(\sE_2,\sE_3)$, and note that $U \simeq \Hom_X(\EE_1,\EE_2)$.

Now, given an integer $k$,
we fix vector spaces $I$ and $W$, and an isomorphism $D : W \to W^*$ with
$D^\tra=(-1)^{\eX+1} D$ (an $(\eX +1)$-symmetric duality).
According to the values of $i_X$ and $k$, we need to choose the
dimension of $I$ and $W$ as follows:
\begin{equation}
  \label{valori}
    \begin{tabular}[h]{c|c|c|c}
    $i_X$ & $k$ & $\dim(I)$ & $\dim(W)$ \\
    \hline
    \hline
    $4$ & $k \ge 1$  & $k  $ & $2k+2 $ \\
    $3$ & $k \ge 2$  & $k-1$ & $k    $ \\
    $2$ & $k \ge 2$  & $k  $ & $4k+2 $ \\
    $1$ & $k \ge 8$  & $k-7$ & $3k-20$ 
  \end{tabular}
\end{equation}

The lower bounds for $k$ appear in order to ensure non-emptiness of $\MI_X(k)$.
Let us write $\rG(W,D)$ for the symplectic group $\Sp(W,D)$, or for the
orthogonal group $\rO(W,D)$ depending on
whether $\eX = 0$ or $1$, so that $\eta \in \rG(W,D)$ operates on $W$
and satisfies:
\[
\eta^\tra D \eta = D.
\]
We look at an element $A$ of $I \ts W \ts U$ as a map:
\[
A : W^* \ts \sE_2 \to I  \ts \sE_3,
\]
and, under the dualities \eqref{E0}, we can consider:
\[
D \, A^\tra : I^{*} \ts \sE_1 \to  W^* \ts \sE_2.
\]
We define the subvariety $\sQ_{X,k}$ of $I \ts  W \ts U$ by:
\[
\sQ_{X,k} = \{A \in I \ts  W \ts U \mid A \, D \, A^\tra = 0\},
\]
and its open piece:
\[
\sQ_{X,k}^\circ = \{A \in \sQ_{X,k} \mid \mbox{$A : W^* \ts \sE_2 \to I  \ts \sE_3$ is surjective}\}.
\]
We also define the group:
\[
G_k = \GL(I)  \times \rG(W,D),
\]
acting on $I \ts  W \ts U$ by $(\xi,\eta).A=(\xi^{-1} A \eta^\tra)$.
The group $G_k$ acts on the variety $\sQ_{X,k}$, since,  for all $A \in
\sQ_{X,k}$, we have $\xi^{-1} A \eta^\tra D
\eta A^\tra \xi^{-\tra}=\xi^{-1} A D A^\tra \xi^{-\tra}=0$.
Clearly, $G_k$ acts also on $\sQ_{X,k}^\circ$.
Here is the first main result of this paper.

\begin{THM} \label{generale}
  Let $X$ be a smooth complex Fano threefold of Picard number $1$ and
  $\HH^3(X)=0$.
  Let $I,W,D$ and the $\EE_i$'s be as above.
  Then a $k$-instanton $E$ on $X$ is the cohomology of a monad of the
  form:
  \[
  I^* \ts \EE_1 \xr{D \, A^\tra} W^* \ts \EE_2 \xr{A} I \ts \EE_3,
  \]
  and conversely the cohomology of such a monad is a $k$-instanton.
  The moduli space $\MI_X(k)$ is isomorphic to the geometric quotient:
  \[
  \sQ_{X,k}^\circ / G_k.
  \]
\end{THM}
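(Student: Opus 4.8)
The proof naturally splits into two parts: (a) showing every $k$-instanton arises as the cohomology of a monad of the stated shape, and (b) identifying the moduli space with the GIT-free geometric quotient $\sQ_{X,k}^\circ/G_k$. I would begin with (a). The plan is to apply the semiorthogonal decomposition \eqref{Db} together with Beilinson-type spectral sequence machinery: given a $k$-instanton $E$, resolve it using the exceptional collection $\langle\EE_0,\EE_1,\EE_2,\EE_3\rangle$ and the dual collection. The vanishing conditions $\HH^1(X,F)=0$ (equivalently $\HH^1(X,E)=0$ after the twist), stability, and Serre duality via $F\simeq F^*\otimes\omega_X$ should force all but three terms of the spectral sequence to vanish. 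Concretely I expect: $E$ has no cohomology against $\EE_0\simeq\OO_X(-\rX-\eX)$ (this is the key instanton-type vanishing, to be checked by Riemann–Roch plus stability, possibly using Kodaira vanishing and the normalization $c_1(E)\in\{0,-1\}$), while the cohomology against $\EE_1,\EE_2,\EE_3$ is concentrated in a single degree and computes the spaces $I^*,W^*,I$ with the dimensions dictated by \eqref{valori}. The self-duality $F\simeq F^*\otimes\omega_X$ combined with \eqref{E0} then identifies the two outer terms as Serre-dual, forcing the left map to be $D\,A^\tra$ for the same $A$ defining the right map and the duality $D$ of the correct $(\eX+1)$-symmetry type. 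This is essentially the argument of \cite{brafa1:arxiv} adapted to the four specific $X$; I would cite that paper for the resolution and concentrate on verifying the numerology and the symmetry of the monad maps.

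**The converse and the monad dictionary.** For the converse direction I would take $A\in\sQ_{X,k}^\circ$, so $A\,D\,A^\tra=0$ guarantees the composition in the monad vanishes, surjectivity of $A:W^*\ts\EE_2\to I\ts\EE_3$ is imposed, and injectivity of $D\,A^\tra$ follows (on a dense open, then everywhere by stability/torsion-freeness arguments, or directly from the rank count since $D$ is an isomorphism and $A$ has maximal rank generically). Then $E:=\Ker(A)/\im(D\,A^\tra)$ is a sheaf; I must check it is a rank-$2$ vector bundle with the right Chern classes (immediate from additivity of Chern classes over the monad and the chosen dimensions), that $\HH^1(X,E)=0$ (chase the monad cohomology long exact sequences against $\OO_X$ using the exceptionality vanishings $\Hom^\bullet(\EE_i,\EE_j)$), that $E\simeq E^*\otimes\omega_X$ up to the twist (this is exactly where the $(\eX+1)$-symmetric duality $D^\tra=(-1)^{\eX+1}D$ enters: dualizing the monad and tensoring by $\omega_X$ gives back the same monad via $D$), and finally that $E$ is stable. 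Stability is the one place I would expect to need a genuine argument rather than bookkeeping: I would argue that a destabilizing sub-line-bundle would produce a section of $E$ or $E(-1)$ contradicting the monad vanishings, or invoke the standard fact that a simple rank-$2$ bundle with $c_1\in\{0,-1\}$ and the relevant cohomology vanishing on a Fano threefold of Picard rank one is stable.

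**Assembling the quotient.** With the monad dictionary in place, the family of monads with parameter $A$ gives a family of $k$-instantons, hence a morphism from $\sQ_{X,k}^\circ$ to the moduli functor; the group $G_k=\GL(I)\times\rG(W,D)$ acts by change of basis of $I$ and $W$ (the constraint $\eta^\tra D\eta=D$ is precisely what preserves both the quadric $\sQ_{X,k}$ and the self-duality of the resulting bundle), and two parameters give isomorphic instantons iff they lie in the same $G_k$-orbit — one direction is clear, the other follows from uniqueness of the monad resolution established in part (a), i.e. the monad is functorially recovered from $E$ up to the action of $G_k$. Thus the map $\sQ_{X,k}^\circ\to\MI_X(k)$ is $G_k$-invariant, surjective, and its fibres are exactly the $G_k$-orbits. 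To upgrade this to an isomorphism with the \emph{geometric} quotient, I would check that $G_k$ acts freely on $\sQ_{X,k}^\circ$ (a monad whose cohomology is stable has only scalar automorphisms, so the stabilizer is the image of the centre, which I can arrange to act trivially) with closed orbits, apply Luna's slice theorem or the standard representability of the moduli space as a quotient of a quot-scheme, and match the two; the dimension count $\delta$ from Theorem~\ref{esiste} serves as a consistency check. The single hardest point I anticipate is the rigidity statement in part (a) — that the monad is \emph{uniquely} determined by $E$ up to $G_k$, including the fact that the left map is forced to be $D\,A^\tra$ and not merely \emph{some} map of the right shape — since this requires controlling all the Ext-groups $\Ext^\bullet(\EE_i,E)$ and their multiplication maps, and it is what makes the quotient description exact rather than just a surjection.
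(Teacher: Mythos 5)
Your proposal follows essentially the same route as the paper: a Beilinson-type resolution with respect to the exceptional collection, cut down to three terms by the instanton vanishings and stability, followed by lifting the duality $E\simeq E^*(-\eX)$ to the monad and decomposing $\HH^0(X,\bigwedge^2(W^*\ts\EE_2^*)(-\eX))$ to force the left map to be $D\,A^\tra$ — this is exactly what the paper does case by case in Lemma~\ref{quadric-basic}, Proposition~\ref{monad-5} and Proposition~\ref{v22-basic}, with the geometric-quotient statement established in Section~\ref{section-quotient} by embedding $\sQ_{X,k}^\circ$ into the space of pairs $(A,B)$ with $AB=0$ acted on by $\GL(I)\times\GL(W)\times\GL(J)$ and invoking Le Potier's arguments rather than Luna's slice theorem. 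One small correction: the $G_k$-action on $\sQ_{X,k}^\circ$ is not free, since $\eta=\lambda\,\id_W$ preserves $D$ exactly when $\lambda^2=1$, so the stabilizer of a point with stable cohomology is $\{\pm 1\}$ (whence the fibres $\GL(I)/\{\pm1\}$ in Theorems~\ref{V5} and~\ref{X12}); this finite stabilizer does not obstruct the quotient being geometric.
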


More specific results in different directions will be given for each
threefold, according to the index, leaving aside the well-known case
of $\p^3$:
\begin{itemize}
\item for $i_X=3$,
i.e. when $X$ is a quadric threefold $Q$, Theorem \ref{quadric} will show that
$\MI_{Q}(k)$ is affine, in analogy with the case of $\p^n$ proved in
\cite{costa-ottaviani:multidimensional}.
We also say some words on jumping lines in this case. Moreover,
we show that 't Hooft bundles provide unobstructed instantons;
\item for $i_X=2$ a description of $\MI_X(k)$ in terms of nets of
  quadrics will be given in Theorem \ref{V5};
\item for $i_X=1$, we provide a parametrization of $\MI_X(k)$ in terms of nets
  of quadrics, describe $\MI_X(k)$ when $k=8$ (i.e. $1$
higher than the minimal value $7$), and we study the variety of
jumping conics of an instanton. We refer to Theorem \ref{X12}.
\end{itemize}

When $\HH^3(X)\ne 0$, there are not enough exceptional objects on $X$
to generate $\Db(X)$.
However in some cases, notably when $X$ is rational, one can rely on
a semiorthogonal decomposition of $\Db(X)$ containing a subcategory
equivalent to $\Db(\Gamma)$, where $\Gamma$ is a curve whose
Jacobian is isomorphic to the intermediate Jacobian of $X$.
We investigated $\MI_X(k)$ in terms of this curve already in
\cite{brafa1:arxiv} (for threefolds of index $1$ and genus $7$) and in 
\cite{brafa3:doi} (for index $1$ and genus $9$).
Here we carry out a similar analysis in the case of Del Pezzo
threefolds of degree $4$, see Theorem \ref{dP4}.

\medskip

Concerning smoothness and irreducibility of $\MI_X(k)$, 
it might be natural to conjecture that these properties hold when $X$ is
general in its moduli space.
Although we do not tackle here this problem, a few comments are in order.
First, in some cases these properties do hold, in particular for low
values of $k$.
For instance, this is the case for $i_X=3$
(i.e. $X$ is a quadric threefold) and $k=2,3$ (see \cite{ottaviani-szurek}),
for most Del Pezzo threefolds (i.e. $i_X=2$) when $k=2$, (see
\cite{markushevich-tikhomirov,markushevich-tikhomirov:double-solid,dani:v5}),
 and for many prime Fano threefolds
(i.e. $i_X=1$) when $\frac{g_X}{2}+1 \le k \le \frac{g_X+1}{2}+2$
(see for instance \cite{iliev-markushevich:degree-quartic, iliev-manivel:genus-8, brafa1:arxiv,
  brafa3:doi, brafa2, iliev-markushevich:sing-theta:asian}).
We contribute to this with Theorem \ref{dP4}, and Theorem \ref{X12}
respectively for Del Pezzo threefolds of degree $4$ and prime Fano
threefolds of genus $12$.

However, it should be
clear that these properties do
not necessarily hold when $X$ is not general in its moduli space.
For instance, for $i_X=1$ and
$g_X=5$ (i.e. $X$ is the intersection of $3$ quadrics in $\p^6$), the
moduli space $\MI_X(4)$ is isomorphic to a double cover of the
discriminant septic, as proved in \cite{brafa2}.
For special $X$, this septic can be singular
and can have many irreducible components.
Examples of threefolds $X$ with $i_X=1$ and
$g_X=7$ such that $\MI_X(6)$ is singular are given in \cite{brafa1:arxiv}.
Still $\MI_X(6)$ is always connected in this case.
Finally, A. Langer outlined 
an argument based on
\cite{langer:quadrics} that suggests that $\MI_X(k)$ cannot be smooth
and irreducible for all $k$ when $X$ is a smooth quadric threefold.

\medskip

Finally, I would like to mention the independent paper of
Alexander Kuznetsov, \cite{kuznetsov:instanton}, appeared shortly after submission of this
manuscript.
Kuznetsov's paper takes up many questions considered here; 
the results are particularly close for Fano threefolds of index $2$
and degree $4$ and $5$.
An interesting analysis of instantons in terms of their jumping lines is
also carried out in {\it loc. cit.}

\medskip

The paper is organized as follows.
The next section contains some basic material, and the definition and
first properties of even and odd instanton bundles on Fano threefolds.
Next, we study $\MI_X(k)$ according to the value
of the index $i_X$.
In Section \ref{i3} we study (odd) instantons on a quadric threefold,
i.e. when $i_X=3$.
In Section \ref{i2} we deal with Del Pezzo threefolds, i.e. with the
case $i_X=2$.
We first give an existence result for instantons on any
such threefold.
Then, we focus on Del Pezzo threefolds of degree $5$, in this case 
we have $\HH^3(X)=0$ and $\MI_X(k)$ is studied in terms of monads.
Next we look at degree $4$, in this case $\HH^3(X)\ne 0$ and
$\MI_X(k)$ is studied in terms of a curve of genus $2$.
In Section \ref{i1} we study $\MI_X(k)$ for $i_X=1$, mainly focusing on
the case prime Fano threefolds of genus $12$
(corresponding to the condition $\HH^3(X)=0$).
In Section \ref{section-quotient} we carry out some basic
considerations to check that the quotient describing $\MI_X(k)$ is geometric.

\section{Basic facts about instantons on Fano threefolds}

\numberwithin{equation}{section}

\subsection{Preliminaries}

We set up here some preliminary notions, mainly concerning Fano
threefolds $X$ of Picard number $1$ with $\HH^3(X)=0$.

\subsubsection{Notations and conventions}

We work over the field $\C$ of complex numbers.
Given a vector space $V$, we denote by $\G(k,V)$ the Grassmann variety
of $k$-dimensional quotients of $V$, and we write $\p(V) = \G(1,V)$.
If $\dim(V)=n$, we write $\bG(k,V)=\G(n-k,V)$.

Let $X$ be a smooth projective variety over $\C$.
We already used the notation $\HH^{k,k}(X)$ for the Hodge cohomology
groups of $X$, and, given a coherent sheaf $F$ on $X$, we let $c_k(F) \in
\HH^{k,k}(X)$ be the $k^\tth$ Chern class of $F$ (we said that $c_k(F)$
 is denoted by an integer as soon as $\HH^{k,k}(X)$ is $1$-dimensional
 and a generator is fixed).
We denote by $\HH^k(X,F)$, or simply by $\HH^k(F)$, the $k^\tth$
cohomology group of $F$ and by  
$\hh^k(X,F)$ its dimension as a $\C$-vector space (the same convention
will be applied to $\Ext$ groups).

\medskip

We refer to \cite{huybrechts-lehn:moduli} for the notion of
stability of a coherent sheaf $F$ on $X$ with respect to a polarization $H$ on
$X$.
Stability, for us, will be in the sense of Maruyama-Gieseker, we
will talk about slope-stability when referring to the terminology of
Mumford-Takemoto.
If $\Pic(X) \simeq \Z$, the choice of $H$ does not matter anymore
and we will avoid mentioning it.
We will denote by $\Mo_{X}(r,c_1,\ldots,c_s)$ the coarse moduli space of
equivalence classes of semistable sheaves on $X$ or rank $r$, with Chern classes
$c_1,\ldots,c_s$, where $s$ is the greatest non-vanishing Chern class
(so $s\leq \dim(X)$).
If $H$ is an ample divisor class on $X$, the degree of $X$
(with respect to $H$) is $H^{\dim(X)}$.
The Hilbert scheme of connected Cohen-Macaulay curves of genus $g$ and degree
$d$ contained in $X$ will be denoted by $\sH^d_g(X)$.
The ideal and normal sheaves of a subscheme $Z \subset X$ will be
denoted by $\sI_{Z,X}$ and $\cN_{Z,X}$. 

We will use the derived category $\Db(X)$ of coherent sheaves on $X$. 
We refer to \cite{huybrechts:fourier-mukai} for basic material on this
category.
We refer to \cite{bondal:helices,gorodentsev:exceptional} for the
notions of simple and exceptional bundle, full (strongly) exceptional collection in $\Db(X)$, 
(Koszul) dual collections, left and right mutations and helices.

\subsubsection{Fano threefolds admitting a full exceptional collection}

We denote by $X$ a smooth Fano threefold of Picard number one, and by
$i_X$ its index, so that $K_X=-i_X H_X$. We have set
$i_X=2\rX+\eX$, where $\rX \ge 0$ and $0 \le \eX \le
1$. 

It is well-known that $X$ admits
a full exceptional collection if and only if $\HH^3(X)=0$
i.e., $X$ has trivial intermediate Jacobian.
Further, the collection in this case is strongly exceptional.
Up to our knowledge, the only available proof of this fact is a
case-by-case analysis. Indeed, recall from the introduction that there
are only $4$ classes of
such varieties, namely:
\begin{enumerate}[i)]
\item the projective space $\p^3$, for $i_X = 4$;
\item a quadric hypersurface in $\p^4$, for $i_X = 3$;
\item a linear section $X=\p^6 \cap \bG(2,\C^5)\subset \p^9$, with $H_X^3=5$, for $i_X = 2$;
\item a prime Fano threefold $X \subset \p^{13}$ of genus $12$, in case $i_X = 1$.
\end{enumerate}

So, for $i_X=4$, this is Beilinson's theorem
\cite{beilinson:derived-and-linear}.
For $i_X=3$, the result is due to Kapranov
\cite{kapranov:derived-homogeneous}.
For $i_X=2$ we refer to \cite{orlov:V5}, see also \cite{dani:v5}.
Finally in case 
$i_X=1$ we refer to \cite{kuznetsov:V22, kuznetsov:V22-preprint, faenzi:v22}.
For each of these cases, 
there is a full strongly exceptional collection $\Db(X)= \big\langle
\sE_0,\sE_1,\sE_2, \sE_3 \big\rangle$, where the $\sE_i$'s are vector
bundles satisfying \eqref{E0}.
We let 
$\big\langle \sF_0,\sF_1,\sF_2, \sF_3 \big\rangle$ be the dual collection.
Then, given a coherent sheaf $\sF$ on $X$, we will write the 
cohomology table of $\sF$
as the following display:
\begin{equation}
  \label{table}
\begin{array}[h]{c|c|c }
\hh^3(\sF\ts \sF_0) & \cdots & \hh^3(\sF \ts \sF_3) \\
\hline
\vdots && \vdots \\
\hline  
\hh^0(\sF\ts \sF_0) & \cdots & \hh^0(\sF \ts \sF_3)  \\
\hline
\hline
\sE_0 & \cdots & \sE_3  \\
\end{array}
\end{equation}
The line containing the $\EE_i$'s here is displayed to facilitate
writing the Beilinson spectral sequence that converges to $\sF$.
This amounts to saying that $\sF$ is the cohomology of a complex
$\sC_\sF^\cdot$ such that: 
\begin{equation}
  \label{complex}
  \sC_\sF^j = \oplus_i
     \HH^{i}(X,\sF \ts \sF_{j-i+3})\ts \sE_{j-i+3}.   
\end{equation}
where the index $i$ in the sum runs between $\max(0,j)$ and $\min(3,j+3)$.

\subsection{The basic orthogonality relation}

Let $F$ be a rank $2$ algebraic vector bundle on $X$, and suppose that
$F$ is stable. Let us assume that the following conditions hold:
\begin{equation}
  \label{instanton-condition}
  F \simeq F^* \otimes \omega_X, \qquad \HH^1(X,F)=0.
\end{equation}
The normalization $E$ of $F$ is defined as the twist $E=F(\rX)$, so:
\[c_1(E) \in \{-1,0\}.\]

\begin{dfn} \label{instanton}
We say that $E$ is an {\it instanton bundle} on $X$ if $E$ is the
normalization of a stable rank-$2$ bundle $F$ on $X$ satisfying
\eqref{instanton-condition}.  
We say that $E$ is a {\it $k$-instanton} if moreover $c_2(E)=k$
(recall that this means $c_2(E)=k L_X$),
and that $E$ is {\it even} or {\it odd} according to whether $c_1(E)$ is even or odd.
We denote by $\MI_{X}(k)$ the subscheme of $\Mo_{X}(2,c_1(E),k)$
consisting of (even or odd) instanton bundles.
\end{dfn}

The following elementary lemma summarizes the cohomology vanishing
satisfied by an instanton bundle.

\begin{lem} \label{nulli}
  If $E$ is an (even or odd) instanton on $X$, then we have the vanishing:
  \begin{equation}
    \label{k}
  \HH^i(X,E(-\rX))=0, \qquad \mbox{for all  $i$},
  \end{equation}
  and also $\HH^1(X,E(-\rX-t))=0$, and $\HH^2(X,E(-\rX+t))=0$,
  for all $t \ge 0$.
  Further, we have the relations in $\Db(X)$:
  \begin{equation}
    \label{orto}
    E \in \langle \OO_X(\rX) \rangle^\bot, \qquad   E \in
    {}^\bot\langle \OO_X(-\rX-\eX) \rangle = \big\langle \sE_1,\sE_2, \EE_3 \big\rangle.
  \end{equation}
\end{lem}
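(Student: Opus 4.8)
The plan is to establish the vanishing \eqref{k} first, then derive the remaining vanishings from it by Serre duality and by induction on $t$, and finally translate the vanishings into the orthogonality relations \eqref{orto}. For \eqref{k}, I would use the two hypotheses packaged in \eqref{instanton-condition}. The condition $F \simeq F^* \otimes \omega_X$ for a rank-$2$ bundle just says $c_1(F) = c_1(\omega_X) = -i_X$, equivalently $c_1(E) = c_1(F) + 2\rX = -i_X + 2\rX = -\eX \in \{0,-1\}$, and more importantly it gives a self-duality $E(-\rX)^* \simeq E(-\rX) \otimes \omega_X(\rX+\eX) \otimes \OO_X(-\eX)$; carefully, since $F^* = F \otimes \omega_X^{-1}$ and $E = F(\rX)$, one gets $E(-\rX) = F$ and $F^*(-\rX) = F \otimes \omega_X^{-1}(-\rX) = E(-2\rX)\otimes\omega_X^{-1}$. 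The clean statement I want is that Serre duality applied to $E(-\rX)=F$ reads $\HH^i(X,F) \simeq \HH^{3-i}(X, F^*\otimes\omega_X)^* = \HH^{3-i}(X,F)^*$ using \eqref{instanton-condition}. So $\HH^i(X,E(-\rX)) \simeq \HH^{3-i}(X,E(-\rX))^*$.

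Now $\HH^1(X,E(-\rX)) = \HH^1(X,F) = 0$ by hypothesis, hence $\HH^2(X,E(-\rX)) = 0$ by the self-duality above. It remains to kill $\HH^0$ and $\HH^3$, and again by self-duality it suffices to handle $\HH^0(X,F)$. Here I would invoke stability of $F$: since $F$ is stable with $c_1(F) = -i_X < 0$, its slope is negative, so $F$ has no nonzero global sections, giving $\HH^0(X,F)=0$ and thus $\HH^3(X,F)=0$. This proves \eqref{k}. For the twisted vanishings, $\HH^1(X,E(-\rX-t))=0$ for $t\ge 0$: I would argue by descending induction is not available, so instead induct on $t$ upward using the exact sequence $0 \to E(-\rX-t)\to E(-\rX-t+1)^{\oplus h_X} \to \cdots$ coming from a general hyperplane section — more precisely, twist the structure sequence $0\to\OO_X(-1)\to\OO_X\to\OO_H\to 0$ by $E(-\rX-t+1)$ to get $0 \to E(-\rX-t) \to E(-\rX-t+1) \to E|_H(-\rX-t+1) \to 0$, and use that $E|_H$ on the Fano (or K3/Enriques-type) surface section has the appropriate vanishing $\HH^1$, or more robustly use Kodaira–Kawamata–Viehweg type vanishing / the fact that $E(-\rX)$ has no intermediate cohomology in the relevant range since $-\rX-t \le -\rX$. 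The symmetric statement $\HH^2(X,E(-\rX+t))=0$ for $t\ge 0$ then follows by Serre duality from the first, using the self-duality of $E(-\rX)$ once more: $\HH^2(X,E(-\rX+t)) \simeq \HH^1(X, E(-\rX+t)^*\otimes\omega_X)^* \simeq \HH^1(X,E(-\rX-t-\eX))^*=0$.

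Finally, for \eqref{orto}: the relation $E \in \langle\OO_X(\rX)\rangle^\bot$ means $\RRHom_X(\OO_X(\rX),E) = \RR\Gamma(X,E(-\rX)) = 0$ in $\Db(X)$, which is exactly \eqref{k}. The relation $E \in {}^\bot\langle\OO_X(-\rX-\eX)\rangle$ means $\RRHom_X(E,\OO_X(-\rX-\eX))=0$; by Serre duality this is dual to $\RR\Gamma(X, E \otimes \OO_X(\rX+\eX)\otimes\omega_X)[3]^* = \RR\Gamma(X, E(\rX+\eX-i_X))^*[−3] = \RR\Gamma(X, E(-\rX))^*[−3]$, again zero by \eqref{k}. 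The identification ${}^\bot\langle\OO_X(-\rX-\eX)\rangle = \langle\sE_1,\sE_2,\sE_3\rangle$ is then immediate from \eqref{Db} together with \eqref{E0}, which says precisely $\sE_0 \simeq \OO_X(-\rX-\eX)$; since the full collection is strongly exceptional, $\Db(X) = \langle\sE_0,\sE_1,\sE_2,\sE_3\rangle$ gives the semiorthogonal decomposition ${}^\bot\langle\sE_0\rangle = \langle\sE_1,\sE_2,\sE_3\rangle$. I expect the only delicate point to be the twisted vanishing $\HH^1(X,E(-\rX-t))=0$ for all $t \ge 0$; one wants to present it cleanly for all four indices at once, and the tidiest route is probably to combine \eqref{k} with the restriction sequence to a surface section and a vanishing theorem there, rather than the $\p^3$-style monad argument, since at this stage of the paper the monad has not yet been constructed.
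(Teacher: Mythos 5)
Your treatment of \eqref{k} and of \eqref{orto} is correct and coincides with the paper's argument: $\HH^1(X,E(-\rX))=0$ is the defining condition, $\HH^0(X,E(-\rX))=0$ follows from stability (negative slope), and the Serre-duality self-pairing $\HH^i(X,E(-\rX))\simeq \HH^{3-i}(X,E(-\rX))^*$ coming from $F\simeq F^*\otimes\omega_X$ disposes of $i=2,3$; the two orthogonality relations are then just \eqref{k} read through Serre duality together with \eqref{Db} and \eqref{E0}. (Minor slip: $E(-\rX+t)^*\otimes\omega_X\simeq E(-\rX-t)$, not $E(-\rX-t-\eX)$; this is harmless since both twists are covered by the first vanishing.)

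The genuine gap is in the step $\HH^1(X,E(-\rX-t))=0$ for $t>0$. You correctly reach for the restriction sequence $0\to E(-\rX-t-1)\to E(-\rX-t)\to E|_S(-\rX-t)\to 0$ for a general hyperplane section $S$, but the input needed from $S$ is $\HH^0(S,E|_S(-\rX-t))=0$, not a vanishing of $\HH^1$ of the restriction: the long exact sequence reads $\HH^0(S,E|_S(-\rX-t))\to\HH^1(X,E(-\rX-t-1))\to\HH^1(X,E(-\rX-t))$, so killing the $\HH^0$ term yields the injection $\HH^1(X,E(-\rX-t-1))\hookrightarrow\HH^1(X,E(-\rX-t))$ and the vanishing descends by induction from the base case $t=0$. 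A vanishing of $\HH^1(S,E|_S(\cdot))$ would only give surjectivity of the restriction map in the opposite, useless direction. Moreover, neither of your fallback suggestions works: Kodaira--Kawamata--Viehweg does not apply to an arbitrary stable rank-$2$ bundle, and the claim that $E(-\rX)$ ``has no intermediate cohomology in the relevant range'' is precisely what is being proved. The correct input, which the paper uses, is Maruyama's restriction theorem: $E|_S$ is slope-semistable on $S$, and $E|_S(-\rX-t)$ has strictly negative slope for every index $i_X$ and every $t\ge 0$, hence has no global sections.
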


\begin{proof}
  We have defined $E$ as the normalization $E = F(\rX)$, so
  $\HH^1(X,E(-\rX))=0$ by definition. Clearly, we have
  $\HH^0(X,E(-\rX))=0$ by stability of $E$.
  Using Serre duality, the relation $i_X=2\rX+\eX$, and
  the fact that $E^*$ is isomorphic to $E(\eX)$, we get:
  \[
  \HH^i(X,E(-\rX)) \simeq \HH^{3-i}(X,E(-\rX))^*,
  \]
  so \eqref{k} holds for $i=3,2$ as well. This proves
  $\Ext^i_X(\OO_X(\rX),E)$ for all $i$, i.e. the first part of
  \eqref{orto}. For the second one, one just needs to observe that 
  $\Ext^i_X(\OO_X(\rX),E)$ is dual to
  $\Ext^{3-i}_X(E,\OO_X(-\rX-\eX))$.
  The equality ${}^\bot\langle \OO_X(-\rX-\eX) \rangle =
  \big\langle \sE_1,\sE_2, \EE_3 \big\rangle$ is well-known, and has been mentioned
  already in the introduction, see \eqref{E0}.
  However, for the reader's convenience, we will review it in the sequel in a case-by-case fashion.

  Concerning the vanishing $\HH^1(X,E(-\rX-t))=0$, and
  $\HH^2(X,E(-\rX+t))=0$, first note that the second is given by
  the first by the duality argument we already used, and remark that
  they both hold for $t=0$ in view of \eqref{k}.
  To prove the first vanishing for positive $t$, we consider the restriction $E|_{S}$ of $E$ to a general
  hyperplane section $S$ of $X$ and we recall that $E|_{S}$ is slope-semistable
  by Maruyama's theorem, \cite{maruyama:boundedness-small}. This implies $\HH^0(S,E|_{S}(-\rX-t))=0$ which
  gives $\HH^1(X,E(-\rX-t-1)) \subset \HH^1(X,E(-\rX-t))$.
  So we deduce the desired vanishing for any $t$  from the case
  $t=0$, that we have already proved.
\end{proof}

We will use this lemma to show that $E$ is the cohomology of a {\it monad},
i.e. a complex:
\begin{equation}
  \label{monad-general}
  I^* \ts \EE_1 \to W^* \ts \EE_2 \to I \ts \EE_3,  
\end{equation}
where $I$ and $W$ are vector spaces,
the first map is injective and the last map is surjective.

\subsection{Lower bound on the second Chern class of instantons}

The following lemma is almost entirely
well-known, and proves the emptiness statement in Theorem \ref{esiste}.

\begin{lem} \label{empty}
  The moduli space $\Mo_X(2,-\eX,k)$ is:
  \begin{enumerate}[1)]
  \item empty if $i_X=1$, $2k<g_X+3$, and reduced to a single sheaf
    lying in $\MI_X(k)$ if $i_X=1$, $2k=g_X+3$;
  \item reduced to the class of $\OO_X^2$ if $i_X=2$, $k=0$, and
    empty if $i_X=2$, $k=1$.
  \item empty if $i_X=3$, $k=0$, and reduced to a single sheaf
    $\spi$ if $i_X=3$, $k=1$.
  \end{enumerate}
\end{lem}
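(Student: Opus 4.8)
The plan is to treat each of the three cases separately, in each case reducing the question about the moduli space $\Mo_X(2,-\eX,k)$ to a Riemann--Roch computation combined with the basic properties of stable rank-$2$ sheaves with $c_1(E)\in\{-1,0\}$ and $E^*\simeq E(\eX)$. The key point is that for such a sheaf $E$ one has, by the duality $E\simeq E^*(-\eX)$, Serre duality, and stability, a strong control on $\HH^\bullet(X,E(-\rX))$ (as in Lemma \ref{nulli}), and in particular $\chi(E(-\rX))$ must be $\le 0$; if it is forced to be $<0$ one gets emptiness, while if $\chi=0$ with all individual cohomologies vanishing, one can run a monad/Beilinson-type argument to pin down the unique sheaf.

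First, for $i_X=1$: here $\rX=0$, $\eX=1$, so $E$ is stable of rank $2$ with $c_1(E)=-1$, $c_2(E)=k$, $E^*\simeq E(1)$, and (for a genuine instanton) $\HH^1(X,E)=0$. Compute $\chi(E)$ by Riemann--Roch on a prime Fano threefold of genus $g_X$: this is a linear function of $k$ and $g_X$, and the duality together with stability forces $\hh^0(E)=\hh^3(E)=0$, hence $\chi(E)=-\hh^1(E)+\hh^2(E)\le 0$ with equality exactly when $\HH^1=\HH^2=0$. Writing out $\chi(E)$ one finds it equals (up to the normalization of $L_X$) something proportional to $g_X+3-2k$; so $2k<g_X+3$ gives $\chi(E)>0$, contradicting the vanishing of $\hh^0$ and $\hh^3$, hence $\Mo_X(2,-1,k)=\varnothing$. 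When $2k=g_X+3$ one has $\chi(E)=0$ and all four cohomology groups vanish; then $E\in{}^\bot\langle\OO_X(-1)\rangle=\langle\sE_1,\sE_2,\sE_3\rangle$ and the Beilinson-type complex \eqref{complex} collapses to a short complex whose cohomology is $E$; rigidity of the exceptional bundles $\sE_i$ and the dimension count force this complex, hence $E$, to be unique, and one checks it is an instanton.

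Second, for $i_X=2$ (Del Pezzo threefolds, $\rX=1$, $\eX=0$): here $E$ is stable rank $2$ with $c_1(E)=0$, $c_2(E)=k$, $E^*\simeq E$. If $k=0$ then $c_1=c_2=0$ and $E$ is stable with trivial Chern classes; one shows $E$ is a deformation of $\OO_X^2$ and in fact (being semistable with vanishing discriminant, by Bogomolov and a boundedness argument, or simply by $\chi$-computation and the fact that $\HH^0(E)\ne 0$ would contradict stability unless $E$ splits) the only semistable sheaf is the class of $\OO_X^2$. If $k=1$, Riemann--Roch gives $\chi(E(-1))$; the instanton-type vanishings (from stability and self-duality $E(-1)^*\simeq E(-1)$) would force $\chi(E(-1))\le 0$, but one computes $\chi(E(-1))>0$ for $k=1$ on a Del Pezzo threefold, and since $\hh^0(E(-1))=\hh^3(E(-1))=0$ this is a contradiction; hence $\Mo_X(2,0,1)=\varnothing$. (Here one uses that $E(-1)$ is the normalization and the argument of Lemma \ref{nulli} applies.)

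Third, for $i_X=3$ (the quadric threefold $Q\subset\p^4$, $\rX=1$, $\eX=1$): $E$ is stable rank $2$ with $c_1(E)=-1$, $c_2(E)=k$, $E^*\simeq E(1)$. For $k=0$, Riemann--Roch forces $\chi(E(-1))>0$ while stability and self-duality force $\hh^0(E(-1))=\hh^3(E(-1))=0$, so emptiness. For $k=1$, $\chi(E(-1))=0$ and all cohomologies of $E(-1)$ vanish; using the Kapranov exceptional collection on $Q$ — in particular the spinor bundle $\spi$ — the Beilinson complex collapses and identifies $E(-1)$ (equivalently $E$) with a twist of the spinor bundle, which is unique; one then checks $\spi$ (suitably twisted) is a stable bundle satisfying \eqref{instanton-condition}, so it lies in $\MI_Q(1)$.

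The main obstacle I expect is the uniqueness claims in the two "$\chi=0$" cases ($i_X=1$, $2k=g_X+3$ and $i_X=3$, $k=1$): ruling out emptiness, and showing the sheaf is a genuine locally free instanton rather than a strictly semistable or non-locally-free sheaf, requires a careful analysis of the collapsed Beilinson complex (injectivity of the first map, surjectivity of the last, and local freeness of the cohomology) together with the rigidity of the exceptional bundles $\sE_i$. The pure emptiness assertions, by contrast, are just sign checks on an explicit Riemann--Roch polynomial once the vanishing $\hh^0=\hh^3=0$ coming from stability and Serre duality is in place.
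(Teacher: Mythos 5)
Your central mechanism does not work. For each index you propose to derive emptiness from a sign check on $\chi(E(-\rX))$, claiming e.g.\ that $\chi(E)>0$ when $i_X=1$ and $2k<g_X+3$, that $\chi(E(-1))>0$ when $i_X=2$, $k=1$, and that $\chi(E(-1))>0$ when $i_X=3$, $k=0$. But the self-duality $E(-\rX)\simeq (E(-\rX))^*\ts\omega_X$ together with Serre duality on a threefold forces $\chi(E(-\rX))=-\chi(E(-\rX))$, i.e.\ $\chi(E(-\rX))=0$ identically, for every $k$; this is also visible in the displayed Hilbert polynomials, each of which carries a factor $t$ or $t+1$ vanishing at the relevant twist. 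Likewise your claim for $i_X=1$ that $\hh^0(E)=\hh^3(E)=0$ gives $\chi(E)\le 0$ ``with equality exactly when $\HH^1=\HH^2=0$'' is wrong: Serre duality gives $\hh^2(E)=\hh^1(E)$, so equality always holds and detects nothing about $k$. Hence all three emptiness arguments collapse. A further issue is that the lemma concerns the full moduli space $\Mo_X(2,-\eX,k)$ of semistable sheaves, so you may not invoke the instanton vanishing $\HH^1(X,E(-\rX))=0$ (you do so implicitly in your case $i_X=2$, $k=1$, and in the uniqueness discussions).

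The paper proceeds quite differently where it gives an argument at all. For $i_X=2$ it restricts $E$ to a general hyperplane section $S$ (a Del Pezzo surface), where the Euler characteristic is \emph{not} forced to vanish: $\chi(E|_S)=2-k>0$ for $k\le 1$, while Maruyama's restriction theorem plus Serre duality kill $\HH^2(S,E|_S)$; the resulting global section destabilizes $E|_S$, and an analysis of the Harder--Narasimhan filtration (ideal sheaves of at most one point) shows $E|_S\simeq\OO_S^2$ for $k=0$ and rules out Gieseker-semistability for $k=1$. For $i_X=3$, $k=0$ emptiness is immediate from Bogomolov's inequality ($4c_2-c_1^2=-1<0$), and uniqueness of $\spi$ for $k=1$ is quoted from Ottaviani--Szurek; for $i_X=1$ both emptiness and uniqueness are quoted from earlier work. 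If you want to salvage your approach, the correct move is exactly this passage to a hyperplane section (or to Bogomolov-type inequalities), not an Euler characteristic on $X$ itself.
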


\begin{proof}
  The emptiness result for $i_X=1$ appears in \cite{brafa1:arxiv}.
  Uniqueness for $i_X=1$ is essentially due to Mukai, however a
  proof can be found in \cite{brafa2}.

  We sketch the argument for the case $i_X=2$.
  Let $S$ be a general hyperplane section of $X$, and recall that $S$
  is a smooth Del Pezzo surface, polarized by $H_S=-K_S$.
  Let $E$ be any slope-semistable sheaf on $X$ with $c_1(E)=0$ and
  $c_2(E)=k$. Set $E_S=E|_{S}$, and recall
  that $E_S$ is slope-semistable (with respect to $H_S=H_X \cap S$)
  by Maruyamas's theorem.
  Note that this easily implies 
  $\HH^2(S,E_S)=0$ by Serre duality, indeed 
  $\HH^2(S,E_S) \simeq \Hom_S(E_S,\OO_S(-1))^*$, and the image of a
  non-zero morphism $E_S \to \OO_S(-1)$ would destabilize $E_S$.

  Now, for $k\le 1$, Riemann-Roch-Hirzebruch gives $\chi(E_S)>0$, so there
  is a non-zero global section of $E_S$, hence $E_S$ cannot be slope-stable.
  Then the Harder-Narasimhan (slope) filtration of $E_S$ reads:
  \[
  0 \to E_1 \to E_S \to E_2 \to 0,
  \]
  where each of the $E_i$'s is a torsion-free sheaf of rank $1$ on
  $S$ with $c_1(E_i)=0$.
  So each $E_i$ is the ideal sheaf of $c_2(E_i)$
  points of $S$.
  Since $k \le 1$, we have $c_2(E_i) \in \{0,1\}$ for $i=1,2$.
  Then, either $E_S$ is $\OO_S^2$ (if $k=0$)
  or one of the $E_i$'s is $\OO_S$ and the other is $\sI_x$,
  where $x$ is a point of $S$.
  When $k=0$, it is easy to deduce $E \simeq \OO_X^2$.
  When $k=1$, if $E_1 \simeq \sI_x$, then $E_S \simeq \OO_S \oplus \sI_x$ because
  $\HH^1(S,\sI_x)=0$.
  Then, no case allows $E_S$ to be semistable (in the sense
  of Maruyama-Gieseker), so $E$ cannot be semistable either.

  The uniqueness result for $i_X=3$, i.e. for the quadric threefolds,
  is very well known, the unique instanton in this case being the
  spinor bundle, see e.g. \cite{ottaviani-szurek}.
  Emptiness for $i_X=3$, $k=0$ is clear by Bogomolov's inequality.
\end{proof}

We note that the above proof show that, when $i_X=2$, an indecomposable
slope-semistable rank-$2$ sheaf $E$ on $X$ with $c_1(E)=0$ and
$c_2(E)=1$ is a vector bundle fitting into:
  \[0 \to \OO_X \to E \to \sI_L \to 0,\]
where $L$ is a line contained in $X$.

\medskip

We will need to use the well-known Hartshorne-Serre correspondence.
Adapting \cite[Theorem 4.1]{hartshorne:stable-reflexive} to our setup, 
for fixed $c_1,c_2$, we have a one-to-one correspondence between the
sets of:
\begin{enumerate}[i)]
\item equivalence classes of pairs $(F,s)$, where $F$ is a rank
  $2$ vector bundle on $X$ with $c_i(F)=c_i$ and $s$ is a global
  section of $F$, up to multiplication by a non-zero scalar, whose
  zero locus has codimension $2$,  
\item locally complete intersection 
curves $Z\subset X$ of degree $c_2$, with $\omega_Z\simeq\OO_Z(c_1-i_X)$.
\end{enumerate}
If the pair $(F,s)$ corresponds to $Z$ in the above bijection, we have:
\[
0 \to \OO_X \xr{s} F \to \sI_Z(c_1) \to 0.
\]
For more details on reflexive sheaves we refer again to \cite{hartshorne:stable-reflexive}.

\section{Odd instantons on a smooth three-dimensional quadric}
\label{i3}

Throughout this section, the ambient threefold $X$ will be a smooth 
$3$-dimensional quadric hypersurface in $\p^4$, and we will denote it
by $Q$.
An (odd) $k$-instanton $E$ on $Q$ is a rank-$2$ stable bundle $E$ with
$c_1(E)=-1$, $c_2(E)=k$ and $\HH^1(Q,E(-1))=0$.
The Hilbert polynomial of $E$ is:
\[
\chi(E(t))=\frac{t+1}3(2t^2+4t-3k+3).
\]

The manifold $Q$ is homogeneous under $\Spin(5)$.
Let $\spi$ be the spinor bundle on $Q$.
We have that $\spi$ is the unique
indecomposable bundle of rank $2$ on $Q$ with $c_1(\spi)=-1$ and
$c_2(\spi)=1$.
Note that $\spi^*(-1) \simeq \spi$.
We set $U=\HH^0(Q,\spi(1)) \simeq \Hom_Q(\spi,\OO_Q)$.
Up to isomorphism, $U$ is the unique indecomposable
$4$-dimensional $\Spin(5)$-module.

Fix an integer $k\geq 2$, a vector space $I$ of dimension $k-1$, a
vector space $W$ of dimension $k$ and a symmetric isomorphism $D : W
\to W^*$.
Recall from the introduction that, in order to study $\MI_Q(k)$, we
are lead to consider the vector space $I \ts W \ts U$, and to view and 
an element $A$ of $I \ts W \ts U$ as a map:
\[
A : W^* \ts \spi \to I  \ts \OO_{Q}.
\]
Transposing $A$ and composing with $D$ we get:
\[
D \, A^\tra : I^{*} \ts \OO_{Q}(-1) \to  W^* \ts \spi.
\]
We write $\sQ_k=\sQ_{Q,k}$ so that $\sQ_{k}=\{A \in I \ts  W \ts U \, \vert \, A \,
D \, A^\tra = 0\}$.
Recall that we set $\rG(W)$ for the orthogonal group $\rO(W,D)$,
and that $G_k=\GL(I) \times \rG(W)$ acts on $\sQ_{k}$ by
$(\xi,\eta).A=\xi^{-1}A \eta^\tra$.

The goal of this section is to prove the following result, that
shows Theorem \ref{esiste} and Theorem \ref{generale} for the case $i_X=3$.
The fact that our moduli space is affine closely resembles the main result
of \cite{costa-ottaviani:multidimensional}.

\begin{THM} \label{quadric}
  The moduli space $\MI_Q(k)$ is the geometric quotient:
  \[
  \sQ_k^\circ/G_k \simeq \{A \in \sQ_k \mid  \DD(A) \neq 0 \}
  /
  G_k
  \]
  where $\DD$ is a
  $G_k\times \Spin(5)$-invariant
  form on $I \ts W \ts U$.
  In particular, $\MI_{Q}(k)$ is affine.
  Moreover, there is a $(6k-6)$-dimensional component of $\MI_Q(k)$,
  smooth along the subvariety consisting of 't Hooft bundles.
\end{THM}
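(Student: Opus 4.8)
The plan is to combine the monad description coming from Lemma \ref{nulli} and the Beilinson spectral sequence for $Q$ with an explicit analysis of the invariant-theoretic quotient. First I would set up the Beilinson machinery on $Q$: by Kapranov's theorem \cite{kapranov:derived-homogeneous} we have $\Db(Q)=\langle \OO_Q(-1),\spi(-1),\OO_Q,\OO_Q(1)\rangle$ (up to twist), and I would take $\EE_1=\spi(-1)$, $\EE_2=\OO_Q$, $\EE_3=\OO_Q(1)$ after the normalizing twist so that \eqref{E0} holds with $\eX=1$, $\rX=1$. For an odd $k$-instanton $E$ (so $c_1(E)=-1$), Lemma \ref{nulli} gives $\HH^\bullet(Q,E(-1))=0$ and $E\in\langle\OO_Q(1)\rangle^\bot\cap{}^\bot\langle\OO_Q(-2)\rangle=\langle\EE_1,\EE_2,\EE_3\rangle$; feeding this and the stability vanishing $\HH^0(E)=\HH^0(E^*(-1))=\HH^0(E)=0$ together with $\HH^3(E(-1))=0$ into the spectral sequence \eqref{complex} collapses the Beilinson complex to the asserted three-term monad
\[
I^*\ts\spi(-1)\xr{D\,A^\tra} W^*\ts\OO_Q\xr{A} I\ts\OO_Q(1),
\]
with $\dim I=k-1$, $\dim W=k$ forced by a Riemann–Roch count of the relevant cohomology dimensions, $D$ symmetric because the middle term is self-dual under $\HHom(-,\omega_Q)$. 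The monad condition $A\,D\,A^\tra=0$ just expresses that the composite is zero; injectivity of the first map and surjectivity of the second are open conditions, the latter being precisely membership in $\sQ_k^\circ$. Conversely, given $A\in\sQ_k^\circ$ one checks that the cohomology sheaf $E$ is a rank-$2$ bundle with the right Chern classes, that $\HH^1(E(-1))=0$ by chasing the monad, and that $E$ is stable (here one uses that $\HH^0(E(-1))=0$ plus a standard argument ruling out destabilizing sub-line-bundles, as in \cite{costa-ottaviani:multidimensional}). Two monads give isomorphic instantons iff their data differ by an element of $G_k=\GL(I)\times\rO(W,D)$, which yields the set-theoretic bijection $\MI_Q(k)\simeq \sQ_k^\circ/G_k$; that this is a \emph{geometric} quotient is deferred to Section \ref{section-quotient}.

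Next I would produce the invariant form $\DD$ and identify $\sQ_k^\circ$ as the open locus $\{\DD\neq 0\}$ inside $\sQ_k$. The natural candidate is the (suitably normalized) hyperdeterminant-type invariant of the tensor $A\in I\ts W\ts U$: since $\dim I=k-1$, $\dim W=k$ and $\dim U=4$, the format is unbalanced, so one instead takes the determinant of the induced bundle map — concretely, $A$ gives $W^*\ts\spi\to I\ts\OO_Q$, twisting by $\OO_Q(1)$ and taking $\HH^0$ gives a square matrix whose determinant (or a $\Spin(5)$-equivariant analogue built from $U=\HH^0(\spi(1))$) is a polynomial $\DD(A)$ on $I\ts W\ts U$, manifestly $\Spin(5)$-invariant and semi-invariant, then normalizable to be $G_k$-invariant by a character computation. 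The key point is the equivalence: $A\in\sQ_k^\circ$, i.e. $A:W^*\ts\spi\epi I\ts\OO_Q$, holds iff this square map is an isomorphism of sheaves iff its determinant $\DD(A)\neq 0$. Granting this, $\sQ_k^\circ/G_k$ is the quotient of an affine variety by a reductive group, restricted to the locus where a single invariant is nonzero, hence it is an affine variety — this gives ``$\MI_Q(k)$ is affine''. (Alternatively one quotes \cite{costa-ottaviani:multidimensional}-style reasoning that the GIT quotient of the monad space is affine because surjectivity is detected by a single invariant.)

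For the last sentence I would first compute the dimension of the component: the expected dimension of $\sQ_k$ is $\dim(I\ts W\ts U)-\dim(\text{quadratic equations})=4k(k-1)-\binom{k+1}{2}$ minus... rather than chase this directly I would compute via deformation theory on the instanton side, $\dim\MI_Q(k)=\ext^1_Q(E,E)-\ext^2_Q(E,E)=\hh^1(E\ts E^*)$, and a Riemann–Roch computation using $E^*\simeq E(1)$, $\chi(E\ts E^*)$, and the vanishing of $\HH^0$ (stability) and $\HH^3$ (Serre duality) gives $\ext^1-\ext^2 = 6k-6$; combined with generic smoothness this produces a $(6k-6)$-dimensional component. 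Finally, for 't Hooft bundles: these are the instantons $E$ with $\HH^0(E(1))$ as large as possible, arising from the Hartshorne–Serre correspondence applied to a disjoint union of $k$ lines on $Q$ (lines have the right canonical bundle $\OO_L(c_1-i_Q)=\OO_L(-1-3)$... one adjusts the twist so that $\omega_Z\simeq\OO_Z(c_1-i_Q)$ holds, using $2$ lines' worth of the spinor twist — precisely, $Z$ a disjoint union of conics or lines as dictated by the numerics). The plan is: (i) show such $E$ exists and is a stable instanton by the correspondence recalled after Lemma \ref{empty}; (ii) show $\Ext^2_Q(E,E)=0$ for $E$ 't Hooft, which is the crux — one resolves $\Ext^2_Q(E,E)$ via the extension $0\to\OO_Q\to E(t)\to\sI_Z(\cdots)\to 0$ defining $E$, reducing to cohomology of $\sI_Z$, $\OO_Z$ and twists, all of which vanish in the relevant degree because $Z$ is a disjoint union of rational curves in sufficiently general position; (iii) conclude $\MI_Q(k)$ is smooth of dimension $6k-6$ along the 't Hooft locus, and since the 't Hooft locus is irreducible (parametrized by an open subset of $\sH^d_g(Q)$, which is irreducible) it lies in a single component, which therefore has dimension $6k-6$, matching the count above.

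The main obstacle I expect is step (ii) of the last paragraph: the vanishing $\Ext^2_Q(E,E)=0$ for 't Hooft bundles. This is the unobstructedness statement, and it requires a careful cohomology chase through the defining sequence of $E$ — one must control $\HH^1$ and $\HH^2$ of $\sI_Z\ts E$ and of $\OO_Z\ts E|_Z$ for $Z$ a general union of the appropriate rational curves, and the genericity of $Z$ (general position of the lines/conics) is exactly what makes these groups vanish. A secondary technical point is pinning down precisely which curves $Z$ (lines vs. conics, and the correct twist) realize the 't Hooft instantons on $Q$ so that $\omega_Z\simeq\OO_Z(c_1-i_Q)$ is satisfied with the normalization $c_1(E)=-1$; this is a bookkeeping issue but must be done correctly for the Hartshorne–Serre correspondence to apply. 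Identifying the invariant $\DD$ explicitly and proving the equivalence $\sQ_k^\circ=\{\DD\neq 0\}$ is also somewhat delicate because of the unbalanced tensor format, but it reduces to the standard fact that a map of vector bundles of the same rank is an isomorphism iff its determinant is invertible.
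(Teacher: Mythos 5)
Your overall architecture matches the paper's (monad from the derived category, a single invariant detecting surjectivity hence affineness, unobstructedness along 't~Hooft bundles giving the $(6k-6)$-dimensional component), but two of the steps as you describe them would not go through.

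First, your exceptional collection and hence your monad are wrong. You place the spinor bundle in the first slot and line bundles in the middle and last slots; the collection $\langle \OO_Q(-1),\spi(-1),\OO_Q,\OO_Q(1)\rangle$ is not even semiorthogonal (one has $\Hom_Q(\spi(-1),\OO_Q(-1))\simeq \HH^0(Q,\spi(1))=U\neq 0$), it cannot be twisted to satisfy \eqref{E0} (which forces $\EE_2^*(-1)\simeq\EE_2$, so $\EE_2$ must be $\spi$, not $\OO_Q$), and with $\dim I=k-1$, $\dim W=k$ your complex $I^*\ts\spi(-1)\to W^*\ts\OO_Q\to I\ts\OO_Q(1)$ has cohomology of rank $k-2(k-1)-(k-1)=3-2k<0$. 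The correct monad is $I^*\ts\OO_Q(-1)\xr{DA^\tra} W^*\ts\spi\xr{A} I\ts\OO_Q$: the self-dual bundle $\spi$ must sit in the middle, since the symmetric form $D$ on $W$ is produced precisely by identifying the skew-symmetric lifts of $\kappa:E\to E^*(-1)$ with $\HH^0(Q,\bigwedge^2(W^*\ts\spi^*)(-1))\simeq \s^2W^*$ (the summand $\wedge^2W^*\ts\HH^0(\s^2\spi^*(-1))$ vanishing). With $\OO_Q$ in the middle this identification, and hence the whole $\rO(W,D)$-structure, is lost.

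Second, and more seriously, you do not actually construct the invariant $\DD$, and your fallback argument fails. Your candidate (the determinant of the $\HH^0$-level map after twisting by $\OO_Q(1)$) is a map $W^*\ts U\to I\ts V$ of dimensions $4k$ and $5(k-1)$, which is not square; and the principle ``a map of bundles of the same rank is an isomorphism iff its determinant is invertible'' does not apply to $A:W^*\ts\spi\to I\ts\OO_Q$, which goes from rank $2k$ to rank $k-1$. The actual construction is the crux of the affineness statement: one passes to the symmetric square $\s^2(\phi_A):\wedge^2W^*\ts\wedge^2U\to\s^2I$ of the linear map $\phi_A:W^*\ts U\to I$ and restricts to $\wedge^2W^*\ts\langle\omega\rangle$, where $\langle\omega\rangle$ is the unique $\Spin(5)$-invariant line in $\wedge^2U$; this is a square matrix $M_A$, and $\DD(A)=\det(M_A)$. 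Proving that $\DD(A)\neq0$ is \emph{equivalent} to surjectivity of $A$ on $\sQ_k$ is then a genuine argument: one direction uses the symmetric square of the monad together with $\HH^\bullet(Q,\wedge^2K)=0$ and the identification of $\HH^1(Q,\s^2\spi)$ with $\ker(\wedge^2U\to V)=\langle\omega\rangle$ via the universal sequence $0\to\spi\to U\ts\OO_Q\to\spi(1)\to0$, yielding $\wedge^2W^*\ts\HH^1(Q,\s^2\spi)\simeq\s^2I$; the converse uses a fibrewise degeneration argument at a point $[v]\in Q$ and the injectivity of $\langle\omega\rangle\to U\ts\spi^*_v$. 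None of this is present or replaceable by a routine determinant observation. By contrast, your treatment of the 't~Hooft bundles is essentially right and in fact easier than you anticipate: the relevant curves are disjoint unions of $k$ \emph{lines} (with $F=E(1)$, so $\omega_\LL\simeq\OO_\LL(c_1(F)-3)$ holds automatically), no genericity is needed since every line in $Q$ has normal bundle $\OO_L\oplus\OO_L(1)$ and $E^*\ts\OO_\LL\simeq\cN_{\LL,Q}$, and $\Ext^2_Q(E,E)=0$ follows from $\HH^2(Q,E)=\HH^2(Q,E^*)=\HH^1(Q,\cN_{\LL,Q})=0$.
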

Here, we call {\it 't Hooft (odd) bundle} a rank-$2$ bundle $E$ with $c_1(E)=-1$, such
that $E(1)$ has a global
section that vanishes along the disjoint union of $k$ lines in $Q$.
The proof of the above theorem occupies the rest of this section.

\subsection{Geometry of a smooth quadric threefold}

Let us review some basic facts on the geometry of a smooth
three-dimensional quadric in $\p^4$.
All the material contained in this part is well-known, we refer to
\cite{ein-sols} and to \cite{ottaviani-szurek} for more details.

\subsubsection{The spinor bundle on a quadric threefold}

The manifold $Q \subset \p^{4}=\p(V)$, is homogeneous under the
action of $\Spin(5)$.
The Dynkin diagram of $\Spin(5)$ is of type
$B_{2}$, and the corresponding Lie algebra has two positive roots,
$\alpha_1$ and $\alpha_2$, where $\alpha_1$ is the longest root.
The quadric $Q$ is identified with $\Spin(5)/\rP(\alpha_1)$, and we
have $\p^3=\p(U) \simeq \Spin(5)/\rP(\alpha_2)$.
The group $\Spin(5)$ acts over $V$ by the standard
representation, and over $U$ by the (irreducible) spin representation.
Both $V$ and $U$ are equipped with an invariant duality, so we will silently identify $U$ with
$U^*$ and $V$ with $V^*$.

We denote by $\spi$ the $\Spin(5)$-homogeneous vector bundle given by
the semisimple part of $\rP(\alpha_2)$, normalized to $c_1(\spi)=-1$.
The bundle $\spi$ is called the spinor bundle on
$Q$, it is just the restriction of the universal 
sub-bundle on
the $4$-dimensional quadric, isomorphic to $\G(2,U)$.
Note that the dual of the universal quotient bundle on $\G(2,U)$ also restricts to $\spi$.
We refer to \cite{ottaviani:spinor} for more details on spinor bundles.
We have:
\[
 c_{1}(\spi) = -1, \qquad c_{2}(\spi) = 1, \qquad \HH^i(Q,\spi(t))=0,
\]
for all $t$ if $i=1,2$ and for $i=0$, $t\le 0$.
Further, we have natural isomorphisms:
\[
  \HH^{0}(Q,\spi^*) \simeq U, \qquad \spi^* \simeq \spi(1).
\]
Moreover, we have the universal exact sequence:
\begin{equation} \label{universal-spin}
  0 \to \spi \to U \ts \OO_{Q} \to \spi(1) \to 0.
\end{equation}
We can view $\spi$ also as the bundle associated with a line $L
\subset Q$ via the Hartshorne-Serre correspondence, i.e. we have:
\[
0 \to \OO_Q(-1) \to \spi \to \sI_L \to 0.
\]
The lines of $\p(V)$ contained in $Q$ are parametrized by $\p(U)$.
Writing the above sequence in families with respect to global sections of $\spi$, we
get the description of the universal line $\mathbb{L}$:
\begin{equation}
  \label{uniline}
0 \to \OO_{\p(U)}(-2) \boxtimes \OO_Q(-1) \to \OO_{\p(U)}(-1)
\boxtimes \spi \to \OO_{\p(U)\times Q} \to \OO_{\mathbb{L}} \to 0.
\end{equation}

It is well-known that $\spi$ is stable, and that it is 
characterized by indecomposability and either by Chern classes, or by the
above cohomology vanishing.
For a characterization of $\spi$ in terms of unstable hyperplanes, see
\cite{coanda-faenzi:doi}.

\subsubsection{Derived category of a smooth quadric threefold}

Let us briefly review the structure of the derived category of a
quadric threefold, according to \cite{kapranov:derived-homogeneous}.
The semiorthogonal decomposition \eqref{Db} and the bundles $\sE_i$'s and $\sF_j$'s
satisfying \eqref{E0} reads:
\begin{align}
 \label{col-1} & 
 \Db(Q) = \big\langle \OO_{Q}(-2),  \OO_{Q}(-1), \spi, \OO_{Q} \big\rangle.
\intertext{We have the dual decomposition:}
 \label{col-2} &
 \Db(Q) = \big\langle \OO_{Q}(-1), \sT_{\p^4}(-2)|_{Q}, \spi, \OO_{Q} \big\rangle.
\end{align}

\subsection{The monad of and odd instanton on a quadric threefold}

Let $E$ be an odd instanton on $Q$. Then, according to 
Definition \ref{instanton}, $E$ is a stable bundle of rank $2$ with:
\begin{equation}
  \label{condition-Q}
  c_{1}(E)=-1, \qquad \HH^{1}(Q,E(-1))=0.  
\end{equation}

Let us note incidentally that instanton bundles on quadric
hypersurfaces have been
defined differently by Laura Costa and Rosa Maria Mir\'o-Roig in
\cite{costa-miro:monads-instanton}. 
Our definition
differs from theirs in that they consider bundles with even $c_1$.
Our analysis starts with the next lemma.

\begin{lem} \label{vanishing-Q}
  Let $E$ be a sheaf in $\MI_{Q}(k)$. Then $E$ has the following
  cohomology table with respect to \eqref{col-2}:
  \[
  \begin{array}[h]{c|c|c|c }
    0 & 0 &  0  & 0 \\
    \hline            
    0 & 0 &  0  & 0 \\
    \hline                        
    0 & k-1 &  k  & k-1 \\
    \hline                        
    0 & 0 &  0  & 0 \\
    \hline
    \hline
    \OO_Q(-2) & \OO_Q(-1) & \spi  & \OO_{Q}
  \end{array}
  \]
\end{lem}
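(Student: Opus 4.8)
The goal is to compute the cohomology table of $E \in \MI_Q(k)$ with respect to the dual collection $\langle \OO_Q(-1), \sT_{\p^4}(-2)|_Q, \spi, \OO_Q\rangle$ appearing in \eqref{col-2}, i.e.\ to show that $\hh^i(Q, E \ts \sF_j) = 0$ for all $i$ except $i=1$, and that $\hh^1$ takes the values $0, k-1, k, k-1$ in the four columns. The main input is Lemma \ref{nulli}: since $i_X=3$ we have $\rX=1$, $\eX=1$, so the normalization is $E=F(1)$, $c_1(E)=-1$, and the lemma gives $\HH^\bullet(Q, E(-1))=0$ together with $\HH^1(Q,E(-1-t))=0$ and $\HH^2(Q,E(-1+t))=0$ for all $t\ge 0$. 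Also $E^*\simeq E(1)$.

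First I would treat the four columns one at a time. The column over $\OO_Q$ is $\HH^\bullet(Q,E)$; by stability $\hh^0=0$, Serre duality and $E^*\simeq E(1)$ give $\hh^3(Q,E)=\hh^0(Q,E(-3))=0$ and $\hh^2(Q,E)=\hh^1(Q,E(-3))=0$ by the vanishing $\HH^1(Q,E(-1-t))=0$ at $t=2$; so only $\hh^1(Q,E)=:n$ survives, and $n=-\chi(E)$, which Riemann--Roch on the quadric (using the given Hilbert polynomial $\chi(E(t))=\tfrac{t+1}{3}(2t^2+4t-3k+3)$ at $t=0$) evaluates to $k-1$. The column over $\OO_Q(-1)$ is $\HH^\bullet(Q,E(-1))=0$ entirely, which is exactly \eqref{k}. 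The column over $\spi$ is $\HH^\bullet(Q,E\ts\spi)$: tensoring the universal sequence \eqref{universal-spin} (twisted suitably) or using $\spi^*\simeq\spi(1)$ and Serre duality, one reduces $\hh^0$ and $\hh^3$ of $E\ts\spi$ to vanishing statements of Lemma \ref{nulli} applied to the sheaves $E(-1)$ and its twists; then $\hh^1-\hh^2 = -\chi(E\ts\spi)$, and I would compute $\chi(E\ts\spi)$ by Riemann--Roch (or by tensoring the exact sequence $0\to\spi\to U\ts\OO_Q\to\spi(1)\to 0$ with $E$ and using additivity of $\chi$), obtaining $\hh^1(Q,E\ts\spi)=k$ once $\hh^2$ is shown to vanish.

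For the remaining column over $\sF_1 = \sT_{\p^4}(-2)|_Q$, I would use the restriction to $Q$ of the Euler sequence $0\to\OO_{\p^4}(-1)\to V\ts\OO_{\p^4}\to\sT_{\p^4}(-1)\to 0$, twisted by $\OO(-1)$, giving $0\to\OO_Q(-2)\to V\ts\OO_Q(-1)\to \sT_{\p^4}(-2)|_Q\to 0$. Tensoring with $E$ and taking the long exact cohomology sequence, the terms $\HH^\bullet(Q,E(-1))$ vanish entirely by \eqref{k}, so $\HH^\bullet(Q,E\ts\sF_1)\simeq \HH^{\bullet+1}(Q,E(-2))$. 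Now $\HH^0(Q,E(-2))=0$ by stability, $\HH^3(Q,E(-2))=\HH^0(Q,E(-1))^*=0$, $\HH^1(Q,E(-2))=0$ by the vanishing $\HH^1(Q,E(-1-t))=0$ with $t=1$; so only $\HH^2(Q,E(-2))$ survives, giving $\hh^1(Q,E\ts\sF_1)=\hh^2(Q,E(-2))=\hh^1(Q,E(-1))^{\!*}$... — more carefully, $\hh^2(Q,E(-2))=\hh^1(Q,E(-1))=0$? No: Serre duality on the quadric threefold ($\omega_Q=\OO_Q(-3)$) gives $\hh^2(Q,E(-2))=\hh^1(Q,E^*(-1))=\hh^1(Q,E)=k-1$ using $E^*\simeq E(1)$. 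So the column over $\sF_1$ has the single nonzero entry $k-1$, as claimed.

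The step I expect to be the main obstacle is the vanishing $\hh^2(Q,E\ts\spi)=0$ (equivalently $\hh^1(Q,E\ts\spi^*)=0$ up to a twist), since $\spi$ is not a line bundle and Lemma \ref{nulli} only directly controls twists of $E$ by $\OO_Q(t)$. The plan there is to tensor the universal sequence \eqref{universal-spin}, namely $0\to\spi\to U\ts\OO_Q\to\spi(1)\to 0$, with $E$ and chase cohomology, reducing $\HH^\bullet(Q,E\ts\spi)$ and $\HH^\bullet(Q,E\ts\spi(1))$ to $\HH^\bullet(Q,E)$; combined with Serre duality (which swaps $E\ts\spi$ with $E\ts\spi(1)$ up to a shift, since $\spi^*\simeq\spi(1)$ and $\omega_Q=\OO_Q(-3)$) this pins down all four Euler characteristics and the needed vanishings. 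Once all vanishings are in place, the three nonzero numbers are forced by Riemann--Roch, and the table follows.
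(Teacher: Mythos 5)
Your overall route is the same as the paper's: kill everything except $\hh^1$ column by column using Lemma \ref{nulli}, stability, Serre duality and the two exact sequences (the twisted Euler sequence for the $\sT_{\p^4}(-2)|_Q$ column, the universal spinor sequence \eqref{universal-spin} for the $\spi$ column), then read off the nonzero entries from Riemann--Roch. The columns over $\OO_Q(-1)$, $\sT_{\p^4}(-2)|_Q$ and $\OO_Q$ are handled correctly and essentially as in the paper (your Serre-duality twists are occasionally off by one, e.g.\ $\hh^3(Q,E)=\hh^0(Q,E(-2))^*$ rather than $\hh^0(Q,E(-3))$, but every group you invoke vanishes anyway, so the conclusions stand).

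The one step that does not close as written is precisely the one you flagged as the main obstacle, namely $\hh^2(Q,E\ts\spi)=0$. Two things go wrong in your plan. First, Serre duality does not swap $E\ts\spi$ with $E\ts\spi(1)$: since $(E\ts\spi)^{\vee}\ts\omega_Q\simeq E(1)\ts\spi(1)\ts\OO_Q(-3)\simeq E\ts\spi(-1)$, it pairs $\HH^i(Q,E\ts\spi)$ with $\HH^{3-i}(Q,E\ts\spi(-1))$. Second, tensoring \eqref{universal-spin} with $E$ leaves the middle term $U\ts E$ with $\HH^1\neq 0$, so the long exact sequence only exhibits $\HH^2(Q,E\ts\spi)$ as a quotient of $\HH^1(Q,E\ts\spi(1))$, which is nonzero; no vanishing follows. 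The fix, which is the paper's argument, is to tensor \eqref{universal-spin} with $E(-1)$ instead: the middle term $U\ts E(-1)$ then has no cohomology at all by \eqref{k}, so $\HH^i(Q,E\ts\spi)\simeq\HH^{i+1}(Q,E\ts\spi(-1))$ for all $i$; combining with the Serre duality above gives $\hh^i(Q,E\ts\spi)=\hh^{2-i}(Q,E\ts\spi)$, and in particular $\hh^2(Q,E\ts\spi)=\hh^0(Q,E\ts\spi)=0$ by stability of $E$ and $\spi$. With that vanishing in place your Riemann--Roch computation of $\hh^1(Q,E\ts\spi)=k$ is correct.
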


\begin{proof}
  For $X=Q$, we have $\rX=1$ and $\eX=1$ so $F=E(1)$.
  The rightmost and leftmost columns are given by stability of $E$ 
  together with Lemma \ref{nulli}
  and by Hirzebruch-Riemann-Roch.
  For the second column from the right,  write the twisted Euler sequence:
  \[
  0 \to \OO_Q(-2) \to \OO_Q(-1)^5 \to \sT_{\p^4}(-2)|_{Q} \to 0,
  \]
  and tensor it by $E$.
  Using Lemma \ref{nulli}, Serre duality,  and $E^*(-1)\simeq E$ we
  get, for all $i$:
  \[
  \HH^{i-1}(Q,E \ts \sT_{\p^4}(-2)|_{Q}) \simeq  \HH^{i}(Q,E(-2)) \simeq \HH^{3-i}(Q,E)^*.
  \]
  These groups vanish for $i=0,2,3$ as we know from the rightmost
  column, and we are left with $\hh^1(Q,E \ts \sT_{\p^4}(-2)|_{Q})=k-1$.

  It remains to look at the third column. We would like to prove:
  \[
     \HH^{i}(Q,E\ts \spi) = 0, \qquad \mbox{for $i \neq 1$},
  \]
  and this will finish the proof, for the dimension of $\HH^{2}(Q,E\ts
  \spi)$ will then be computed again by Hirzebruch-Riemann-Roch.
  The case $i=0$ is clear by stability, and using Serre duality we see
  that the case $i=3$ follows from stability too.
  To show the case $i=2$, we tensor by $E(-1)$ the exact sequence
  \eqref{universal-spin}, and we use $\spi^*(-1) \simeq \spi$ and $\HH^i(Q,E(-1))=0$ for all $i$.
  We get
  $\HH^{2}(Q,\spi \ts E) \simeq \HH^{3}(Q,E\ts \spi(-1))$ and this
  group vanishes by Serre duality and by stability of $\spi$ and $E$.
\end{proof}

\begin{lem} \label{quadric-basic}
  Let $k \ge 2$ and $E$ be a sheaf in $\MI_{Q}(k)$.
  Then $E$ is the cohomology of the following monad:
  \begin{equation}
    \label{eq:monad-quadric}
    I^{*} \ts \OO_{Q}(-1) \xr{D \, A^\tra} W^* \ts \spi \xr{A} I \ts \OO_{Q},
  \end{equation}
  where $I \simeq \C^{k-1}$, $W \simeq \C^{k}$, $D : W \to W^*$ is
  a symmetric duality, and $A$ is surjective.
  Conversely, the cohomology of a monad of this form sits in $\MI_{Q}(k)$.
\end{lem}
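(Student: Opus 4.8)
The plan is to read off the monad from the Beilinson-type resolution \eqref{complex} attached to the dual collection \eqref{col-2}, and then to rigidify it into the symmetric normal form using the self-duality $E^*\simeq E(1)$ (valid since $E$ has rank $2$ and $c_1(E)=-1$). So let $E\in\MI_Q(k)$ and apply \eqref{complex} with $\sF=E$ relative to $\langle\OO_Q(-1),\sT_{\p^4}(-2)|_Q,\spi,\OO_Q\rangle$. By Lemma \ref{vanishing-Q} the cohomology table of $E$ has a single nonzero row, in degree $1$, with entries $0,k-1,k,k-1$; hence $\sC^j_E=0$ for $j\notin\{-1,0,1\}$ and $E$ is the cohomology in the middle of a three-term complex
\[
\HH^1(Q,E\ts\sT_{\p^4}(-2)|_Q)\ts\OO_Q(-1)\xr{\alpha}\HH^1(Q,E\ts\spi)\ts\spi\xr{\beta}\HH^1(Q,E)\ts\OO_Q ,
\]
which is a monad in the sense of \eqref{monad-general}: since $E$ is locally free, exactness of this complex off degree $0$ forces $\beta$ to be surjective and $\alpha$ to be a subbundle inclusion. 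Put $I=\HH^1(Q,E)$ (dimension $k-1$) and $W^*=\HH^1(Q,E\ts\spi)$ (dimension $k$); the identification $\HH^1(Q,E\ts\sT_{\p^4}(-2)|_Q)\simeq\HH^1(Q,E)^*=I^*$ from the proof of Lemma \ref{vanishing-Q} rewrites the left term as $I^*\ts\OO_Q(-1)$. Then $\beta$ is an element $A\in\Hom_Q(W^*\ts\spi,I\ts\OO_Q)=I\ts W\ts\Hom_Q(\spi,\OO_Q)=I\ts W\ts U$, and it is surjective.

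Next I would produce $D$. Dualize the monad $M^\cdot=(I^*\ts\OO_Q(-1)\xr{\alpha}W^*\ts\spi\xr{A}I\ts\OO_Q)$ and twist by $\OO_Q(-1)$: since $E$ is locally free, $\RRHHom(M^\cdot,\OO_Q)\cong E^*$, so (using $\spi^*\simeq\spi(1)$ and $E^*\simeq E(1)$) the complex $(M^\cdot)^\vee(-1)$ is again a monad with cohomology $E$, and it has the same Beilinson terms $I\ts\OO_Q(-1)\to W\ts\spi\to I^*\ts\OO_Q$. Because a monad built from a full exceptional collection is determined up to isomorphism by its cohomology (standard rigidity; here $\Hom_Q(E,E)=\C$ makes the comparison map unique up to scalar), there is an isomorphism $\phi\colon M^\cdot\xrightarrow{\sim}(M^\cdot)^\vee(-1)$. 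On the middle term $\phi$ is a linear isomorphism $W^*\to W$, i.e. a nondegenerate bilinear form $D$ on $W$; comparing the left maps of the two monads identifies $\alpha$ with $D\,A^\tra$. Dualizing-and-twisting $\phi$ gives another such isomorphism, hence equals $\lambda\phi$ with $\lambda^2=1$, and for the odd index $\eX=1$ (reflecting the symplectic self-duality of $\spi$ together with $\dim Q=3$) one gets $\lambda=1$, so $D^\tra=(-1)^{\eX+1}D=D$ is symmetric. Finally $\beta\circ\alpha=0$, valid in any monad, reads $A\,D\,A^\tra=0$, so $A\in\sQ_k$, and surjectivity of $A$ gives $A\in\sQ^\circ_k$. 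This rigidification is the main obstacle: one must be sure that the monad itself (not just its terms) is unique and that $\phi$ is compatible with the canonical dualities of the cohomology spaces, which is precisely what makes $D$ symmetric rather than merely nondegenerate.

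For the converse, start from $A\in\sQ^\circ_k$ and let $E$ be the cohomology of $I^*\ts\OO_Q(-1)\xr{D\,A^\tra}W^*\ts\spi\xr{A}I\ts\OO_Q$. As $A$ is surjective, $\ker A$ is a vector bundle of rank $k+1$; as $D$ is an isomorphism and $A^\tra$ is fiberwise injective (being, up to $\spi^*(-1)\simeq\spi$, the twisted dual of the surjection $A$), the map $D\,A^\tra$ is a subbundle inclusion, so $E=\ker A/\im(D\,A^\tra)$ is a vector bundle of rank $2$. A direct Chern class computation on the monad (using $c_1(\spi)=-1$, $c_2(\spi)=1$, $H_Q^3=2$) gives $c_1(E)=-1$ and $c_2(E)=k$; in particular $E$ is odd and, being a rank-$2$ bundle with $\det E=\OO_Q(-1)$, it satisfies $E^*\simeq E(1)$ and hence the self-duality $E(-1)\simeq (E(-1))^*\ts\omega_Q$ required by \eqref{instanton-condition}. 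Twisting the monad by $\OO_Q(-1)$ and using $\HH^i(Q,\OO_Q(-1))=\HH^i(Q,\OO_Q(-2))=\HH^i(Q,\spi(-1))=0$ for all $i$ shows $\HH^i(Q,E(-1))=0$ for all $i$, in particular $\HH^1(Q,E(-1))=0$. Finally, from the display $0\to I^*\ts\OO_Q(-1)\to\ker A\to E\to 0$, together with $\HH^0(Q,\spi)=0$ and $\HH^1(Q,\OO_Q(-1))=0$, one gets $\HH^0(Q,E)=0$; since $E$ has rank $2$ and odd determinant this forces $E$ to be stable. Hence $E\in\MI_Q(k)$, and the proof is complete.
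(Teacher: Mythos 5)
Your overall architecture is the paper's: extract the three-term Beilinson complex from the cohomology table of Lemma \ref{vanishing-Q}, identify the outer terms via $\HH^1(Q,E\ts\sT_{\p^4}(-2)|_Q)\simeq I^*$, lift the canonical duality of $E$ to the monad, and check the converse by twisting the monad by $\OO_Q(-1)$. The first step and the converse are correct and essentially identical to the paper's argument.

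The gap is exactly where you flag "the main obstacle": the symmetry of $D$. Your reduction shows that the comparison isomorphism $\phi\colon M^\cdot\to(M^\cdot)^\vee(-1)$ is unique up to scalar and satisfies $\phi^\vee(-1)=\lambda\phi$ with $\lambda^2=1$, so the middle component $D$ is either symmetric or skew-symmetric — but the assertion that "for the odd index $\eX=1$ one gets $\lambda=1$, so $D$ is symmetric" is not proved, and the sign bookkeeping is genuinely delicate: the middle component of $\phi$ lives in $W\ts W\ts\Hom_Q(\spi,\spi^*(-1))$, and since the generator of the one-dimensional space $\Hom_Q(\spi,\spi^*(-1))$ is itself \emph{skew} (as $\wedge^2\spi\simeq\OO_Q(-1)$), the relation between the parity of $\phi$ and the parity of $D$ picks up a sign that your argument never pins down. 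This is precisely the content of the lemma's normal form, so it cannot be left to "it works out". The paper closes it by a direct computation: the lift $\tilde\kappa$ of the skew duality $\kappa\colon E\to E^*(-1)$ is a skew-symmetric element of $\HH^0(Q,\bigwedge^2(W^*\ts\spi^*)\ts\OO_Q(-1))$, which decomposes as $\s^2W^*\ts\HH^0(Q,\OO_Q)\ \oplus\ \wedge^2W^*\ts\HH^0(Q,\s^2\spi^*(-1))$; the second summand vanishes, so $\tilde\kappa$ necessarily lies in $\s^2W^*$ and $D$ is symmetric. Inserting this decomposition (or an equivalent explicit sign computation) is what your proof needs to be complete; everything else stands.
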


\begin{proof}
Let $k \ge 2$ and let $E$ be an odd $k$-instanton on $Q$.
The decomposition of $\Db(Q)$ allows us to write $E$ as cohomology of 
the complex $\sC_E^\cdot$ whose terms are given by \eqref{complex},
once we compute the cohomology table of
$E$ with respect to the collection \eqref{col-1}.
By the previous lemma, this gives that 
$E$ is the cohomology of a monad of the form:
\[
\HH^1(Q,E \ts \sT_{\p^4}(-2)|_{Q}) \ts \OO_Q(-1) \to \HH^1(Q,E \ts \spi) \ts \spi \to
\HH^1(Q,E) \ts \OO_Q.
\]
We set:
\[
I = \HH^1(Q,E), \qquad W = \HH^2(Q,E \ts \spi)^*.
\]
We have computed in the previous lemma $\dim(I) = k-1$, $\dim(W) = k$,
and the proof of the previous lemma gives $\HH^1(Q,E \ts \sT_{\p^4}(-2)|_{Q}) \simeq I^*$.
We can thus rewrite the above resolution as:
\begin{equation}
  \label{eq:first-resolution}
  0 \to I^* \ts \OO_{Q}(-1) \xr{A'} W^* \ts \spi \xr{A} I \ts \OO_{Q} \to 0.
\end{equation}

Now, since $E$ is locally free we have a natural skew-symmetric duality
$\kappa : E \to E^{*}(-1)$. One can easily prove that $\kappa$ must
lift to a skew-symmetric isomorphism between the 
resolution \eqref{eq:first-resolution} and:
\[
  0 \to I^* \ts \OO_{Q}(-1) \xr{A^\tra} W \ts \spi^*(-1) \xr{(A')^\tra} I \ts \OO_{Q} \to 0.
\]
Let $\tilde{\kappa}$ be the isomorphism from the middle term of the
resolution above to the middle term of our original resolution.
Then $\tilde{\kappa}$ is skew-symmetric and lies in:
\[
\HH^{0}(Q,\bigwedge^{2} (W^* \ts \spi^{*}) \ts \OO_Q(-1)) \simeq
\left\{
  \begin{array}[h]{c}
    \HH^{0}(Q,\s^{2}W^* \ts \OO_{Q}) \\
    \oplus \\
    \HH^{0}(Q,\wedge^{2}W^* \ts \s^{2} \spi^{*}(-1)).
  \end{array}
\right.
\]

The second summand in the above decomposition is zero.
On the other hand, we may identify
the first one with $\s^{2}W^*$.
The element $D$ of $\s^{2}W^*$ induced by $\tilde{\kappa}$ thus gives a symmetric
isomorphism $W \to W^*$, and we have $A'=D\, A^\tra$.

For the converse implication, let $E$ be the cohomology sheaf of such
a monad.
A straightforward computation shows that $E$ has the desired Chern
classes.
Further, it is easy to see that $E$ satisfies $\HH^0(Q,E)=0$ for
$\HH^1(Q,\OO_Q(-1))=0$ and $\HH^0(Q,\spi)=0$, so $E$ is stable.
Finally, twisting the monad by
$\OO_Q(-1)$ we immediately see that $E$ satisfies  the cohomology vanishing required to lie in $\MI_Q(k)$.
\end{proof}

Note that, for $k=1$, the above lemma says that $\MI_Q(1)$ consists only of the sheaf $\spi$.
Here we describe the moduli space $\MI_Q(k)$ by rephrasing the non-degeneracy
condition of the map $A$ in terms of an invariant resembling the hyperdeterminant.

\begin{proof}[Proof of Theorem \ref{quadric}, first part]
  We prove here all statements of Theorem \ref{quadric} except the
  part related to 't Hooft bundles.

  Let us first define the form $\DD$.
  Recall that there is a natural identification of $U$ and $U^*$.
  Consider
  an element $A$ of $I \ts W \ts U $ as a linear map:
  \[
  \phi_A : W^* \ts U \to I.
  \]
  Taking the symmetric square of $\phi_A$ we obtain a linear map:
  \begin{equation} \label{sym2A}
  \s^2(\phi_A) : \wedge^{2} W^* \ts \wedge^{2} U \to \s^{2} I,
  \end{equation}
  and recall that $\wedge^{2} U$ contains a unique
  $\Spin(5)$-invariant subspace of rank $1$, generated by a $2$-form
  $\omega$.
  We consider thus $\langle \omega \rangle \subset \wedge^{2} U$,
   and the restriction of
  \eqref{sym2A} to $\wedge^{2} W^* \ts \langle \omega \rangle \to
  \s^{2} I$. This gives a square matrix $M_A$ of order ${{k+1}\choose{2}}$.
  We define:
  \[
  \DD(A) = \det(M_A).
  \]
  Clearly, $\DD$ is $ G_k \times \Spin(5)$-invariant by definition.
  
  \medskip
  In view of the previous lemma, the next thing to do is to show that
  $\sQ_k^\circ=\{A \in \sQ_k \mid \DD(A)\ne 0\}$, i.e. that $A \in \sQ_k$
  is surjective as a map $W^* \ts \spi \to I \ts \OO_{Q}$ if and
  only if $A \in \sQ_k$ satisfies $\DD(A)\ne 0$.

  Let us now show that, that the first condition implies the second.
  So, assume that $A \in \sQ_k$ corresponds to a surjective map $W^* \ts \spi \to I
  \ts \OO_{Q}$,  let $E$ be the odd instanton bundle associated
  with $A$, and let us check that $\DD(A)$ is non-zero.
  So let $E$ be defined by the monad \eqref{eq:monad-quadric}, and set
  $K=\ker (A)$.
  It is easy to check the vanishing of $\HH^{j}(Q,K(-1))$
  for all $j$.

  Considering the symmetric square of
  \eqref{eq:monad-quadric} we obtain the exact sequences:
  \begin{align}
    \label{eq:sym2E_1} & 0 \to \s^{2} I ^{*}(-2) \to K \ts I^{*}(-1)
    \to \wedge^{2} K \to \wedge^{2} E \to 0, \\
    \label{eq:sym2E_2} & 0 \to \wedge^{2} K \to 
    \begin{array}{c}
      \wedge^{2} W^*   \ts \s^{2} \spi \\
      \oplus \\
      \s^{2} W^*   \ts \wedge^{2} \spi
    \end{array}
    \to W^*   \ts I \ts \spi \to \s^{2} I \to 0.
  \end{align}
  
  Since all the cohomology groups of the first, second, and fourth term in
  \eqref{eq:sym2E_1} vanish, so do those of $\wedge^{2} K$. Plugging
  into \eqref{eq:sym2E_2}, since $\HH^{j}(Q,\spi)=0$ for all $j$, and
  since $\wedge^{2} \spi \simeq \OO_{Q}(-1)$ we obtain an isomorphism:
  \begin{equation} \label{another-iso}
  \wedge^{2} W^* \ts \HH^{1}(Q,\s^{2} \spi) \simeq \s^{2} I.
  \end{equation}

  Note that the $1$-dimensional vector space $\HH^{1}(Q,\s^{2} \spi)$
  is isomorphic to $\Ext^{1}_{Q}(\spi^{*},\spi)$, and this extension
  corresponds to the universal exact sequence \eqref{universal-spin}.
  Recall that
  \[
  \wedge^{2} U \simeq V \oplus \langle \omega \rangle,
  \]
  as $\Spin(5)$-modules, and $\HH^{0}(Q,\OO_{Q}(1)) \simeq V$.
  Then, taking cohomology in the symmetric square of
  \eqref{universal-spin} we see that $\HH^{1}(Q,\s^{2} \spi)$ equals
  the kernel of $\wedge^{2} U \to V$.
  Therefore, taking cohomology in \eqref{eq:sym2E_2} and restricting
  to $\wedge^{2} W^* \ts \HH^{1}(Q,\s^{2} \spi)$ (i.e. considering the
  isomorphism of \eqref{another-iso}) amounts to looking at the restriction to
  $\wedge^{2} W^* \ts \langle \omega \rangle$ of $\s^{2} (\phi_A)$ (i.e. the map whose determinant is $\DD(A)$).
  So $\DD(A) \ne 0$.

  \medskip

  Conversely, let us now prove that, if $A$ lies in $\sQ_k$ and $A$ is not surjective,
  then $\DD(A)=0$. 
  This time we prefer to work dually and consider 
  the map $A^\tra$ and its lift $\phi_A^\tra :I^{*} \to W \ts U$.
  A point $[v] \in Q$ is represented by a vector $v
  \in V$ and we have a projection $\pi_{v}:U
  \to \spi_{v}^*$. The assumption that $A$ fails to be surjective
  (i.e., $A^\tra$ is not injective) means that we can find a vector
  $y \in I^{*}$ such that $A_{v}^\tra(y)=0$ at some point $[v] \in
  Q$. Equivalently, $\id_{W} \ts \pi_{v} \, \phi_A^\tra (y)=0$.

  We consider the map $g$ defined as the composition:
  \[
  \s^{2} I^{*}
  \xr{\s^{2} ({\phi_A^\tra})}
  \wedge^{2} W \ts U \ts U
  \xr{\id_{\wedge^{2} W \ts U}  \ts \pi_{v}}
  \wedge^{2} W \ts U \ts \spi^{*}_{v}.
  \]
  Under our degeneracy assumption, we have $g(y^{2})=0$.
  We would like to check that
  $\s^{2}({\phi_A^\tra})(y^{2})$ goes to zero under the projection $\Pi$ of
  $\wedge^2 W \ts U \ts U$ onto $\wedge^{2} W \ts \langle \omega
  \rangle$ (recall that we have an invariant splitting $\wedge^2 U
  \simeq V \oplus \langle \omega \rangle$), for in this case we
  clearly have $\DD(A)=0$.
  Note that we have the factorization:
  \[
  g = \id_{\wedge^{2} W} \ts j_{v} \, \Pi \, \s^{2} ({\phi_A^\tra}),
  \]
  where $j_{v}: \langle \omega \rangle \to U \ts
  \spi^{*}_{v}$ is defined by the symmetric square of
  \eqref{universal-spin}.
  
  Hence we are done if we prove that $j_{v}$ is injective. But the map
  $j_{v}$ is the evaluation at $[v] \in Q$ of the natural map $\OO_{Q}
  \to \HH^{0}(Q,\spi^{*})^* \ts \spi^{*}$, which is a fibrewise
  injective map of vector bundles.

  We have thus proved that $\sQ_k^\circ = \{ A \in \sQ_k \mid \DD(A)
  \ne 0\}$.
  One then shows that the map $\{A \in \sQ_k \mid
  \DD(A)\ne 0\} \to \MI_Q(k)$ that associates with $A$ the cohomology
  of the monad given by $A$ is an affine geometric quotient, in
  the same way as in \cite[Section 2]{costa-ottaviani:multidimensional}.
\end{proof}

This proves the first part of Theorem \ref{quadric}.
In the next section we look at 't Hooft bundles on $Q$,
and give the proof of the second part of Theorem \ref{quadric}.

\subsection{Unobstructedness of odd 't Hooft bundles}

By analogy with the case of $\p^3$, we speak of (odd) 't Hooft bundles
for (odd) instantons associated with a configuration of skew lines
contained in $Q$. 

\begin{dfn}
Let $\rL = L_1 \cup \cdots \cup L_k$ be the union of $k \ge 2$ disjoint
lines in $Q$.
Then by Hartshorne-Serre's construction we have a
rank-$2$ bundle $E_\LL$ fitting into:
\begin{equation} \label{EL}
  0 \to \OO_Q(-1) \to E_\LL \to \sI_{\LL,Q} \to 0.
\end{equation}
The bundle $E_\LL$ is said to be the 't Hooft (odd) bundle associated with $\LL$.
If $\LL$ is contained in (one ruling of) a smooth hyperplane section
of $Q$, then $E_\LL$ is said to be a special 't Hooft bundle.
\end{dfn}

The following proposition shows that $\MI_Q(k)$ is smooth along the
subvariety of 't Hooft bundles, thus completing the proof of Theorem \ref{quadric}.

\begin{prop} \label{thooft}
  Let $\LL \subset Q$ be a union of $k$ disjoint lines, and let
  $E=E_\LL$ be defined as above. Then $E$ lies in $\MI_Q(k)$, and satisfies
  $\Ext^2_Q(E,E)=0$.
\end{prop}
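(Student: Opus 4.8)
The plan is to first establish that $E = E_\LL$ lies in $\MI_Q(k)$, and then to compute $\Ext^2_Q(E,E)$ via the defining sequence \eqref{EL}. For the first part, stability of $E$ follows because $\HH^0(Q,E)=0$: indeed from \eqref{EL} we get $\HH^0(Q,E) \subset \HH^0(Q,\sI_{\LL,Q})=0$ (a reduced union of lines is not a hypersurface section, so it imposes the vanishing), and $c_1(E)=-1$ makes $E$ automatically stable once it has no sections. The instanton vanishing $\HH^1(Q,E(-1))=0$ should be checked by twisting \eqref{EL} by $\OO_Q(-1)$: one needs $\HH^1(Q,\sI_{\LL,Q}(-1))=0$, which reduces (using $\HH^1(Q,\OO_Q(-1))=\HH^2(Q,\OO_Q(-1))=0$) to $\HH^0(\LL,\OO_\LL(-1))=0$, true since each line contributes $\HH^0(\p^1,\OO_{\p^1}(-1))=0$. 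So $E \in \MI_Q(k)$, and $c_2(E)=k$ by the Hartshorne--Serre correspondence.

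For the vanishing of $\Ext^2_Q(E,E)$, I would apply $\RRHom_Q(E,-)$ to the sequence $0 \to \OO_Q(-1) \to E \to \sI_{\LL,Q} \to 0$, giving a long exact sequence relating $\Ext^i_Q(E,E)$ to $\Ext^i_Q(E,\OO_Q(-1))$ and $\Ext^i_Q(E,\sI_{\LL,Q})$. Since $E$ is locally free with $E^* \simeq E(1)$, we have $\Ext^i_Q(E,\OO_Q(-1)) \simeq \HH^i(Q,E^*(-1)) \simeq \HH^i(Q,E)$, which vanishes for $i=1,2,3$ by Lemma \ref{nulli} and stability (only $\HH^1(Q,E)=I$, of dimension $k-1$, is nonzero). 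So it suffices to prove $\Ext^2_Q(E,\sI_{\LL,Q})=0$. For this I would apply $\RRHom_Q(E,-)$ to $0 \to \sI_{\LL,Q} \to \OO_Q \to \OO_\LL \to 0$: we get $\Ext^i_Q(E,\OO_Q)\simeq\HH^i(Q,E(1))$, where $\HH^2(Q,E(1)) \simeq \HH^1(Q,E(-2))^* = 0$ and $\HH^1(Q,E(1))=0$ by Lemma \ref{nulli}, so the relevant piece reduces to showing $\Ext^1_Q(E,\OO_\LL) \twoheadrightarrow \Ext^2_Q(E,\sI_{\LL,Q})$ and $\Ext^2_Q(E,\OO_\LL)=0$. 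Since $\LL$ is a smooth curve and $E$ is locally free, $\Ext^i_Q(E,\OO_\LL) \simeq \HH^i(\LL, E^*|_\LL \otimes \text{(correction)})$; more precisely using $\EExt^\bullet_Q(E,\OO_\LL)$ and the fact that $E$ is a bundle, $\Ext^i_Q(E,\OO_\LL)\simeq \HH^i(\LL,(E^*\otimes \det\cN_{\LL,Q})|_\LL[\text{shift}])$ — but more cheaply, since $\LL$ is one-dimensional $\HH^2$ of any sheaf on $\LL$ vanishes, giving $\Ext^2_Q(E,\OO_\LL)=0$ directly. Hence $\Ext^2_Q(E,\sI_{\LL,Q})=0$ and therefore $\Ext^2_Q(E,E)=0$.

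The main obstacle I expect is the careful bookkeeping of which $\HH^i(Q,E(t))$ and $\HH^i(\LL,-)$ terms survive, and in particular making sure that the connecting maps in the two long exact sequences are handled correctly (the vanishing of $\Ext^2_Q(E,\sI_{\LL,Q})$ genuinely uses that $\LL$ is a curve, so that cohomology in degree $2$ on $\LL$ is automatically zero, together with the instanton vanishing for $E(1)$ and $E(-2)$). An alternative, possibly cleaner route is to restrict $E$ to a general hyperplane section $S$ (a Del Pezzo surface), use $0 \to E(-1) \to E \to E|_S \to 0$ to transfer the $\Ext$ computation to the surface where Serre duality reads $\Ext^2_S(E|_S,E|_S) \simeq \Hom_S(E|_S,E|_S(-1))^*$, and conclude by stability of $E|_S$ that this Hom group vanishes; one then climbs back up using the vanishing of $\HH^i(Q,E\otimes E^*(-1))$ in the appropriate degrees. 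Either way the computation is routine once the vanishing inputs from Lemma \ref{nulli} are in place.
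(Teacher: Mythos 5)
The first half of your argument (membership in $\MI_Q(k)$: stability via $\HH^0(Q,\sI_{\LL,Q})=0$, the instanton vanishing via $\HH^0(\LL,\OO_\LL(-1))=0$) is correct and is essentially what the paper does. The problem is in the second half. After reducing to $\Ext^2_Q(E,\sI_{\LL,Q})=0$, you apply $\Hom_Q(E,-)$ to $0 \to \sI_{\LL,Q} \to \OO_Q \to \OO_\LL \to 0$ and correctly observe that, since $\Ext^2_Q(E,\OO_Q)\simeq\HH^2(Q,E(1))=0$, the group $\Ext^2_Q(E,\sI_{\LL,Q})$ is a \emph{quotient} of $\Ext^1_Q(E,\OO_\LL)$. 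But you then conclude its vanishing from $\Ext^2_Q(E,\OO_\LL)=0$, which is the \emph{next} term in the long exact sequence and is irrelevant: a surjection $A \twoheadrightarrow B$ gives $B=0$ only if $A=0$. What you actually need is $\Ext^1_Q(E,\OO_\LL)=\HH^1(\LL,E^*|_\LL)=0$ (no correction term is needed here, as $E$ is locally free). This is not automatic from $\LL$ being a curve: since $E(1)$ has a section vanishing exactly on $\LL$, one has $E^*|_\LL \simeq E(1)|_\LL \simeq \cN_{\LL,Q}$, and the vanishing of $\HH^1$ holds because every line $L\subset Q$ has normal bundle $\OO_L\oplus\OO_L(1)$, with no negative summand. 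This normal-bundle computation (the paper's \eqref{normal}) is the actual geometric content of the unobstructedness statement, and it is missing from your proof; if the lines could have a $\OO_L(-1)$ summand in their normal bundle, the argument, and indeed the proposition, could fail.

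Your proposed alternative route through a hyperplane section $S$ also cannot work as stated: it would prove $\Ext^2_Q(E,E)=0$ for \emph{every} stable rank-$2$ bundle whose restriction to a general $S$ is semistable, i.e.\ unobstructedness of arbitrary stable bundles, which is false (already on $\p^3$). The descent in twists breaks because $\HH^2(S,\EEnd(E_S)(-t)) \simeq \Hom_S(E_S,E_S(t-2))^*$ is nonzero as soon as $t\ge 2$, so the surjections $\HH^2(Q,\EEnd(E)(-t-1))\to\HH^2(Q,\EEnd(E)(-t))$ stop long before Serre vanishing takes over. The fix for your main argument is short: insert the identification $E^*|_\LL\simeq\cN_{\LL,Q}\simeq\bigoplus_j(\OO_{L_j}\oplus\OO_{L_j}(1))$ and use it to kill $\Ext^1_Q(E,\OO_\LL)$; with that in place the rest of your bookkeeping goes through and coincides with the paper's proof.
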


\begin{proof}
  Set $E=E_\LL$. Let us first note that $c_1(E)=-1$ (so
  $E^*(-1) \simeq E$) and $c_2(E)=k$.
  We observe that there are isomorphisms:
  \begin{equation}
    \label{normal}
  E^* \ts \OO_\LL \simeq \cN_{\LL,Q} \simeq
  \bigoplus_{j=1,\ldots,k}\OO_{L_j} \oplus \OO_{L_j}(1).
  \end{equation}
  
  To prove that $E$ satisfies the cohomology vanishing \eqref{condition-Q}, it suffices to take
  cohomology of \eqref{EL}, twisted by $\OO_Q(-1)$, and to observe
  that $\HH^i(Q,\OO_\LL(-1))=0$ for all $i$. Moreover, stability of
  $E$ is easily checked, since $\HH^0(Q,E_\LL)=0$ follows
  immediately from \eqref{EL}.
  
  Let us now prove $\Ext^2_Q(E,E)=0$.
  Applying the functor $\Hom_Q(E,-)$ to the exact
  sequence \eqref{EL}, we obtain:
  \[
  \HH^2(Q,E) \to  \Ext^2_Q(E,E) \to
  \HH^2(Q,E^* \ts \sI_{\LL,Q}).
  \]
  Since $E$ is an odd instanton bundle, we have seen in Lemma
  \ref{vanishing-Q} that $\HH^2(Q,E)=0$.
  So we only need to check $\HH^2(Q,E^* \ts \sI_{\LL,Q})=0$.
  To do this, we take cohomology of the exact sequence:
  \[
  0 \to E^* \ts \sI_{\LL,Q} \to E^* \to E^* \ts \OO_{\LL} \to 0.
  \]
  Using \eqref{normal} we see that $\HH^1(Q,E^* \ts
  \OO_{\LL})=0$ and Lemma \ref{nulli} gives $\HH^2(Q,E^*)=0$.
  This implies the desired vanishing.
\end{proof}

\subsection{Jumping lines of an odd instanton over a quadric threefold}

Let $E$ be an odd instanton on $Q$, and $L$ a line contained in $Q$.
\begin{dfn}
We say that $L$ is {\it jumping} for $E$ if:
\[
\HH^1(L,E|_L) \ne 0.
\]
Note that \cite[Proposition 5.3]{coanda-faenzi:doi} ensures that not all lines contained
in $Q$ are jumping for $E$.
We say that $E$ has {\it generic splitting} if the set of jumping
lines of $E$ has codimension $2$ in $\p(U)$.
We denote by $\sC_E$ the curve of jumping lines of an instanton $E$
with generic splitting, and by $i$ its embedding in $\p(U)$.
\end{dfn}

\begin{prop}
  Let $E$ be a $k$-instanton on $Q$ with generic splitting.
  Then $\sC=\sC_E$ is a Cohen-Macaulay curve of degree ${k \choose 2}$, equipped with a torsion-free sheaf $\sF$ fitting into:
  \[
  0 \to \OO_{\p(U)}(-k) \to W^* \ts \OO_{\p(U)}(-1) \xr{B_E} I \ts \OO_{\p(U)} \to i_*\sF \to 0.
  \]
  The instanton $E$ can be recovered from $(\sC,\sF)$.
\end{prop}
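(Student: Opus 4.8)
The plan is to realise $\sC_E$ and $\sF$ as direct images of $E$ along the family of lines in $Q$. Let $\mathbb{L}\subset\p(U)\times Q$ be the universal line, with projections $p$ to $\p(U)$ and $q$ to $Q$; then $p|_{\mathbb{L}}\colon\mathbb{L}\to\p(U)$ is a $\p^1$-bundle whose fibre over $[L]$ is $L$. Tensoring the resolution \eqref{uniline} of $\OO_{\mathbb{L}}$ by the locally free sheaf $q^*E$ gives the exact sequence
\[
0\to\OO_{\p(U)}(-2)\boxtimes E(-1)\to\OO_{\p(U)}(-1)\boxtimes(E\ts\spi)\to q^*E\to q^*E|_{\mathbb{L}}\to 0 .
\]
Apply $Rp_*$, using $Rp_*\big(\OO_{\p(U)}(a)\boxtimes\FF\big)=\OO_{\p(U)}(a)\ts R\Gamma(Q,\FF)$. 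The leftmost term contributes nothing because $\HH^\bullet(Q,E(-1))=0$ (Lemma~\ref{nulli}); Lemma~\ref{vanishing-Q}, with the Serre duality and stability arguments of its proof, shows that $R\Gamma(Q,E\ts\spi)$ and $R\Gamma(Q,E)$ are concentrated in degree $1$, with $\HH^1(Q,E\ts\spi)\simeq W^*$ and $\HH^1(Q,E)\simeq I$. Hence $Rp_*(q^*E|_{\mathbb{L}})$ is represented by a two-term complex $W^*\ts\OO_{\p(U)}(-1)\xr{B_E}I\ts\OO_{\p(U)}$ placed in degrees $0$ and $1$, so $R^0p_*(q^*E|_{\mathbb{L}})=\Ker B_E$, $R^1p_*(q^*E|_{\mathbb{L}})=\coker B_E$, and $R^ip_*(q^*E|_{\mathbb{L}})=0$ for $i\geq 2$. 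Here $B_E\in\Hom_{\p(U)}\big(W^*\ts\OO_{\p(U)}(-1),I\ts\OO_{\p(U)}\big)=I\ts W\ts U$ is $Rp_*$ of the map $\OO_{\p(U)}(-1)\boxtimes(E\ts\spi)\to q^*E$ of \eqref{uniline} tensored by $E$; unwinding the tautological section in \eqref{uniline} and the identifications $\HH^0(\p(U),\OO_{\p(U)}(1))\simeq U\simeq\Hom_Q(\spi,\OO_Q)$, the map $B_E$ is the element $A$ of the monad \eqref{eq:monad-quadric}.

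Since $p|_{\mathbb{L}}$ is a $\p^1$-bundle, base change holds for the top direct image, so the fibre of $R^1p_*(q^*E|_{\mathbb{L}})$ at $[L]$ is $\HH^1(L,E|_L)$ and $\coker B_E$ is supported exactly on the jumping locus. Endow $\sC_E$ with the determinantal scheme structure $V(I_{k-1}(B_E))$ (vanishing of the maximal minors of the $(k-1)\times k$ matrix $B_E$); then $\coker B_E$ is an $\OO_{\sC_E}$-module, and the generic-splitting hypothesis says precisely that $\sC_E$ has the expected codimension $2$ in $\p(U)\simeq\p^3$, whence $\sC_E$ is Cohen-Macaulay of pure dimension $1$ by the unmixedness theorem for determinantal schemes. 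The kernel $\Ker B_E$ is reflexive (kernel of a map of vector bundles) and of rank $1$ (as $\coker B_E$ has codimension-$2$ support), hence a line bundle on $\p^3$; comparing first Chern classes along $0\to\Ker B_E\to W^*\ts\OO_{\p(U)}(-1)\to\im B_E\to 0$ gives $\Ker B_E\simeq\OO_{\p(U)}(-k)$. Splicing this with $0\to\im B_E\to I\ts\OO_{\p(U)}\to\coker B_E\to 0$ yields the displayed four-term exact sequence, with $i_*\sF=\coker B_E$, where $\sF:=\coker B_E$ is viewed as a sheaf on $\sC_E$. The class of $\sC_E$ is computed by Thom-Porteous: with $c_j=c_j\big(I\ts\OO_{\p(U)}-W^*\ts\OO_{\p(U)}(-1)\big)=\binom{k+j-1}{j}h^j$, where $h$ is the hyperplane class, one gets $[\sC_E]=c_1^2-c_2=\binom{k}{2}h^2$, i.e.\ $\deg\sC_E=\binom{k}{2}$. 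Finally, from the four-term resolution $\sF$ has projective dimension at most $2$ over $\OO_{\p(U)}$; being supported in dimension $1$, Auslander-Buchsbaum forces the projective dimension to be $2$ and $\mathrm{depth}\,\sF_x=1$ at each $x\in\sC_E$, so $\sF$ is a maximal Cohen-Macaulay, hence torsion-free, $\OO_{\sC_E}$-module.

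For the reconstruction, note that since $k\geq 2$ both maps in the four-term sequence have entries of positive degree, so it is the \emph{minimal} free resolution of $i_*\sF$ over $\OO_{\p(U)}$ and hence is recovered from $(\sC_E,\sF)$ up to isomorphism of complexes; in particular $B_E=A$ is recovered up to the action of $\GL(I)\times\GL(W)$. Now $A\in\sQ_k^\circ$, and by the argument of Lemma~\ref{quadric-basic} the admissible symmetric form $D$ (with $A\,D\,A^\tra=0$ and yielding the self-dual monad) is unique up to a scalar, since it is induced by the self-duality of $E$, which is unique up to a scalar by simplicity. Thus the residual ambiguity is exactly the $G_k=\GL(I)\times\rO(W,D)$-action, so $(\sC_E,\sF)$ determines $A$, hence the monad \eqref{eq:monad-quadric}, up to $G_k$, and Lemma~\ref{quadric-basic} recovers $E$.

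The step I expect to be the main obstacle is the first paragraph: setting up the push-forward along $\mathbb{L}$ carefully, tracking the cohomology of $E$, $E(-1)$ and $E\ts\spi$ to see that $Rp_*(q^*E|_{\mathbb{L}})$ collapses to the two-term complex $B_E$, and then matching $B_E$ with the monad map $A$. Once this is done the determinantal geometry (expected codimension, Thom-Porteous, Cohen-Macaulayness) and the concluding minimal-free-resolution argument are routine; the only further delicate point is the uniqueness up to a scalar of $D$ invoked in the reconstruction.
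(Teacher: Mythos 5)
Your proof is correct and follows essentially the same route as the paper: both compute the derived direct image of $E$ along the universal line via \eqref{uniline}, identify the resulting two-term complex with the monad map $A$, and recover $E$ from the minimal graded free resolution of $i_*\sF$; you merely push forward the resolution of $\OO_{\mathbb{L}}$ tensored by $E$ (using the cohomology table of $E$) where the paper pushes forward the terms of the monad of $E$. Your explicit Thom--Porteous and Auslander--Buchsbaum computations, and especially the remark that the symmetric form $D$ is determined up to scalar by the simplicity of $E$ (so that recovering $A$ only up to $\GL(I)\times\GL(W)$ still determines $E$ up to the $G_k$-action), usefully fill in details that the paper's proof leaves implicit.
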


\begin{proof}
  Denote by $p$ and $q$ the projections from $\mathbb{L}$ to $Q$ and $\p(U)$.
  A line $L$ is jumping for $E$ if and only if the point of $\p(U)$
  corresponding to $L$ lies in the support of $\RR^1q_*p^* (E)$.

  Applying $q_*p^*$ to the terms of the monad of $E$ with the help of
  \eqref{uniline}, we get $q_*p^*(\OO_Q)\simeq \OO_{\p(U)}$, 
  $q_*p^*(\OO_Q(-1)) = 0$, and $q_*p^*(\spi)\simeq
  \OO_{\p(U)}(-1)$, where this last isomorphism follows easily from
  $\HH^i(Q,\spi \ts \spi) =0$ for $i \ne 0$ and $\HH^1(Q,\spi \ts
  \spi) \simeq \C$.
  All higher direct images of the terms of the monad vanish.

  Therefore, we get the desired map $B_E$ as $q_*p^*(A)$, and $\RR^1q_*p^*(E)
  \simeq \coker(B_E)$.
  Since $E$ has generic splitting, the support $\sC$ of $\coker(B_E)$
  is determinantal a curve in $\p(U)$, so that $\coker(B_E)$ is the
  extension by zero to $\p(U)$ of a 
  Cohen-Macaulay sheaf $\sF$ on $\sC$ and $\ker(B_E) \simeq
  \OO_{\p(U)}(-k)$. We get thus the desired exact sequence.

  Finally, given $(\sF,\sC)$, we obtain the displayed exact sequence
  by sheafifying the minimal graded free resolution of the graded
  $\C[x_0,\ldots,x_3]$-module
  associated with $i_*\sF$, so
  that $B_E$ is determined by $\sF$ up to conjugacy.
  But $B_E \in \Hom_{\p(U)}(W^* \ts \OO_{\p(U)}(-1),I \ts \OO_{\p(U)})
  \simeq W \ts I \ts U$ is nothing but $A$, so $E$ is reconstructed
  from $(\sF,\sC)$.
  Since $\sC$ is the degeneracy locus of $B_E$ and has the expected
  codimension $2$ in $\p(U)$, we get that $\sC$ is Cohen-Macaulay of degree ${k \choose 2}$.
\end{proof}

\section{Instanton bundles on Del Pezzo threefolds}
\label{i2}

Here we will look at instanton bundles on Fano threefolds of index
$2$, also called Del Pezzo threefolds, still in the assumption of
Picard number one.
According to Iskovskikh's classification, see
\cite{iskovskih:I, iskovskih:II}, consult also \cite{fano-encyclo},
there are $5$ deformation classes of these threefolds, characterized
by the degree $d_X=H_X^3$, that ranges from $1$ to $5$.

Recall that a $k$-instanton bundle $E$ in this case 
is a stable rank-$2$ bundle on $X$ with:
 \begin{equation} \label{instanton-i2}
 c_{1}(E)=0, \qquad c_{2}(E)=k, \qquad \HH^{1}(X,E(-1))=0.
 \end{equation}
The Hilbert polynomial of $E$ is:
\[
\chi(E(t))=\frac{t+1}3 (d t^2+2d t-3k+6).
\]

\subsection{Construction of instanton bundles on Del Pezzo threefolds}

Some properties of the moduli spaces $\Mo_X(2,0,k)$ and $\MI_X(k)$ have been
already investigated.
In \cite{markushevich-tikhomirov}, it is proved that
$\MI_X(2)$ is \'etale over an open subset of the intermediate Jacobian $J(X)$ of $X$ via
the Abel-Jacobi mapping when $d_X=3$,  i.e. when $X$ is a cubic
threefold, and the degree of this cover is one according to \cite{iliev-markushevich:degree-14}.
In fact $\MI_X(2,0,2)$ is a blowup of $J(X)$
along $\sH^1_0(X)$ by \cite{druel:cubic-3-fold}.
Via the same map, by \cite{markushevich-tikhomirov:double-solid},
$\MI_X(3)$ parametrizes the Theta divisor of $J(X)$ when $d=2$.

The aim of this section, however, is to construct a well-behaved
component of $\MI_X(k)$ for all $X$ and for all $k \ge 2$.
This, together with the construction of Proposition \ref{thooft} (for
$i_X=3$) and 
\cite[Theorem 3.9]{brafa1:arxiv} (for $i_X=2$) will achieve the proof
of Theorem \ref{exists}.

\begin{thm} \label{exists}
  Let $X$ be a Fano threefold of index $2$ and $k\ge2$ an integer.
   Then there exists a $(4k-3)$-dimensional, generically smooth component of $\MI_X(k)$.
\end{thm}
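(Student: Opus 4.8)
\textbf{Proof proposal for Theorem \ref{exists}.}

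The plan is to build instanton bundles on a Del Pezzo threefold $X$ of index $2$ by the Hartshorne--Serre correspondence, starting from a well-chosen curve $Z\subset X$, and then to control the deformation theory of the resulting bundle. Concretely, recall that a section of a rank-$2$ bundle $F$ with $c_1(F)=0$, $c_2(F)=k$ and vanishing locus a curve $Z$ of codimension $2$ requires (after normalizing to $E=F$) that $\omega_Z\simeq\OO_Z(c_1-i_X)=\OO_Z(-2)$; equivalently $Z$ is a curve of degree $k$ and arithmetic genus $1-k$, i.e.\ a disjoint union of $k$ lines is the natural candidate (more generally, a curve with the right $\chi$). The first step is therefore to exhibit, for each $k\ge 2$, a configuration $Z=L_1\cup\cdots\cup L_k$ of $k$ pairwise disjoint lines on $X$; on a Del Pezzo threefold of degree $d_X$ the Hilbert scheme of lines is a smooth surface, and a dimension count (or the explicit geometry of the degree-$d_X$ surface sections, analogous to Proposition \ref{thooft}) shows that one can pick $k$ disjoint lines for all $k$. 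From such a $Z$ one produces via Hartshorne--Serre an exact sequence
\[
0\to\OO_X\to E\to\sI_{Z,X}\to 0,
\]
and one checks that $E$ is a rank-$2$ bundle with $c_1(E)=0$, $c_2(E)=k$.

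The second step is to verify that this $E$ is a genuine $k$-instanton, i.e.\ that it lies in $\MI_X(k)$. Stability with respect to $H_X$ follows from $\HH^0(X,E)=\C$ together with indecomposability (a destabilizing sub-line-bundle would force $Z$ to lie on a divisor, contradicting generality of the configuration; alternatively one argues as in the proof of Lemma \ref{empty} by restricting to a general hyperplane section, which is a Del Pezzo surface on which $E|_S$ is controlled). The instanton vanishing $\HH^1(X,E(-1))=0$ is obtained by twisting the Hartshorne--Serre sequence by $\OO_X(-1)$ and using $\HH^i(X,\OO_X(-1))=0$ for all $i$ (Kodaira vanishing / Serre duality on the Del Pezzo threefold) together with $\HH^0(X,\sI_{Z,X}(-1))=\HH^1(X,\sI_{Z,X}(-1))=0$; the latter reduces to $\HH^i(X,\OO_Z(-1))=0$, which holds because $Z$ is a disjoint union of lines and $\OO_{L_j}(-1)$ has no cohomology on $\p^1$. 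Finally one needs $E\simeq E^*\otimes\omega_X$, but since $c_1(E)=0$ and $E$ has rank $2$ we have $E^*\simeq E$, and $\omega_X=\OO_X(-2)$ with $E^*(-2)$ being exactly the twist appearing in the self-duality condition \eqref{instanton-condition} once one recalls $i_X=2$, $\rX=1$, $\eX=0$; this is automatic for rank-$2$ bundles with $c_1=0$.

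The third and final step — which is the heart of the matter — is the deformation-theoretic computation showing that $\MI_X(k)$ is smooth of the expected dimension $4k-3$ at $[E]$. The expected dimension is $\ext^1_X(E,E)-\ext^2_X(E,E)$, and since $E$ is simple ($\hom_X(E,E)=1$) and $\ext^3_X(E,E)\simeq\hom_X(E,E\otimes\omega_X)^*=\hom_X(E,E(-2))^*=0$ by stability, a Riemann--Roch computation with the Hilbert polynomial of $E\otimes E^*$ on a Del Pezzo threefold gives $-\chi(E,E)=\ext^1-\ext^2-1=4k-4$, hence the expected dimension is $4k-3$ once $\ext^2=0$. So it suffices to prove the vanishing $\Ext^2_X(E,E)=0$, and this I expect to be the main obstacle: one applies $\Hom_X(E,-)$ to the Hartshorne--Serre sequence to get
\[
\HH^2(X,E)\to\Ext^2_X(E,E)\to\Ext^2_X(E,\sI_{Z,X}),
\]
where $\HH^2(X,E)=0$ follows from the instanton vanishings of Lemma \ref{nulli} (for $i_X=2$ one has $\HH^2(X,E)\simeq\HH^1(X,E(-2))^*=\HH^1(X,E^*(-2))^*$, which the lemma kills). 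One is then reduced to $\Ext^2_X(E,\sI_{Z,X})=0$; applying $\Hom_X(E,-)$ to $0\to\sI_{Z,X}\to\OO_X\to\OO_Z\to 0$ this follows from $\HH^2(X,E^*)=0$ (again Lemma \ref{nulli}) together with $\HH^1(X,E^*\otimes\OO_Z)=\HH^1(X,\cN_{Z,X})=0$, and the normal bundle $\cN_{Z,X}\simeq\bigoplus_j\cN_{L_j,X}$ is a direct sum of nonnegative-degree line bundles on $\p^1$ (on a Del Pezzo threefold of Picard number one a general line has normal bundle $\OO_{L}\oplus\OO_L$ or $\OO_L(1)\oplus\OO_L(-1)$; in the latter ``bad line'' case one must instead argue with a configuration avoiding such lines, or absorb the $\HH^1$ contribution and check it cancels, as in the analogous argument of \cite[Theorem 3.9]{brafa1:arxiv}). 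Assembling the vanishings yields $\Ext^2_X(E,E)=0$, so $[E]$ is an unobstructed point of $\MI_X(k)$, and the irreducible component of $\MI_X(k)$ through $[E]$ is generically smooth of dimension $4k-3$, completing the proof.
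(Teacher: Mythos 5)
Your construction breaks down at the very first step, before any deformation theory: the bundle you build is not stable, hence not an instanton. For $i_X=2$ the normalization has $c_1(E)=0$, so the Hartshorne--Serre sequence you write, $0\to\OO_X\to E\to\sI_{Z,X}\to 0$, exhibits $\OO_X$ as a subsheaf of $E$ with $\chi(\OO_X(t))-\tfrac12\chi(E(t))=\tfrac{k}{2}(t+1)>0$ for $t\gg 0$; thus $E$ is Gieseker-unstable (and at best strictly slope-semistable), and your claim that stability ``follows from $\HH^0(X,E)=\C$'' is exactly backwards --- a nonzero section of a rank-$2$ bundle with $c_1=0$ is itself the destabilizing object. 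This is not a fixable detail of the same construction: the 't Hooft recipe with disjoint lines is tied to the parity of $c_1$. A union of lines has $\omega_Z\simeq\OO_Z(-2)$, which under Hartshorne--Serre forces the section to live in $E(t)$ with $c_1(E(t))-i_X=-2$, i.e.\ in $E$ itself when $i_X=2$; there is no twist that both accepts rational curves and avoids the destabilizing section. (This is why the analogous construction does work on the quadric, where $c_1(E)=-1$ and the section sits in $E(1)$.)

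The paper's proof of Theorem \ref{exists} takes a genuinely different route that you would need to adopt. For $k=2$ it applies Hartshorne--Serre to a section of $E(1)$, whose zero locus must then satisfy $\omega_D\simeq\OO_D$: one takes a general \emph{elliptic} curve $D$ of degree $d_X+2$ (obtained by deforming a curve on an anticanonical Del Pezzo surface section out of every hyperplane), so that $0\to\OO_X(-1)\to E\to\sI_{D,X}(1)\to 0$ and stability becomes the non-degeneracy statement $\HH^0(X,\sI_{D,X}(1))=0$, which requires a separate incidence-variety dimension count. For $k\ge 3$ one then proceeds by induction: given an unobstructed $k$-instanton $E$ trivial on a general line $L$, form the elementary modification $G=\ker(E\to\OO_L)$ with $c_2=k+1$, check $\Ext^2_X(G,G)=0$, and show that a general deformation of $G$ in $\Mo_X(2,0,k+1)$ is locally free by a count of the non-locally-free locus together with a double-dual/semicontinuity argument. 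Your final deformation-theoretic computation ($\Ext^2=0$ via $\HH^1(\cN_{Z,X})$ and the Riemann--Roch count giving $4k-3$) is the right shape of argument, but it is applied to an object that does not lie in $\MI_X(k)$, so the proof as written does not establish the theorem.
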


\begin{proof}
  Let $X$ be a smooth Fano threefold of index $2$ and degree $d=d_X$,
  so that
  $H_X$ maps $X$ to $\p^{d+1}$ (with a point of indeterminacy if $d=1$).
  We work by induction on $k \ge 2$, following an idea developed in \cite{brafa1:arxiv}.
  We would like to prove, for all $k \ge 2$, that there exists a
  vector bundle $E$ satisfying \eqref{instanton-i2}, and a line $L\subset X$ with:
  \begin{equation}
    \label{smooth-induction}
    \Ext^2_X(E,E)=0, \qquad E|_{L} \simeq \OO_L^2.    
  \end{equation}

  \begin{step} {\it Base of the induction, $k=2$}.
  This case has been studied in several papers, 
  see for instance
  \cite{arrondo-costa,markushevich-tikhomirov:double-solid,markushevich-tikhomirov,dani:v5}.
  We sketch here a uniform argument.
  Let $S$ be a general hyperplane section of $X$, so that
  $S$ is a Del Pezzo surface of degree $d$, given as blowup of $\p^2$
  in $9-d$ points. Let $h$ be the pullback to $S$ of the class of a line
  in $\p^2$, and denote by $e_i$'s the exceptional divisors, hence
  $(H_X)|_{S}=3h-e_1-\cdots-e_{9-d}$.
  
  Take a general curve $C$ of class $3h-e_1-\cdots-e_{7-d}$, so that
  $C$ is a smooth irreducible curve of genus $1$ and degree $d+2$.
  Note that the normal bundle $\cN_{C,X}$ fits into:
  \[
  0 \to \OO_C(C) \to \cN_{C,X} \to \OO_C(H_C) \to 0,
  \]
  where we set $H_C = (H_X)|_{C}$.
  Clearly, we have $\hh^0(C,\OO_C(H_C))=d+2$ and
  $\HH^1(C,\OO_C(H_C))=0$, since $C$ is an elliptic 
  curve of degree $d+2$.
  Also, working in the Del Pezzo surface $S$, since $C$ is
  a smooth irreducible elliptic curve with
  $C^2=9-7+d=d+2$, we have $\hh^0(C,\OO_C(C))=d+2$ and $\HH^1(C,\OO_C(C))=0$.
  By the above exact sequence we get
  $\hh^0(C,\cN_{C,X})=2d+4$ and $\HH^1(C,\cN_{C,X})=0$.
  So the Hilbert scheme $\sH^{d+2}_1(X)$ is smooth of dimension $2d+4$ at the point
  corresponding to $C$. Let us prove that a general deformation of $C$
  in $\sH^{d+2}_1(X)$ is non-degenerate.

  Assuming the contrary, we consider the incidence of pairs $(D,H)$ in $\sH^{d+2}_1(X) \times
  \pd^{d+1}$ such that $D$ lies in $H$, and we may suppose that the
  projection onto (an open neighborhood of the point given by
  $C$) of $\sH^{d+2}_1(X)$ is dominant.
  Note that the
  projection onto $\pd^{d+1}$ is dominant by construction.
  Observe now that the fibre over $C$
  is a single point corresponding to the hyperplane section $S$, so
  the same happens to the fibre over a general $D$ specializing to $C$.
  On the other hand, the general fibre over a point of $\pd^{d+1}$
  corresponding to a hyperplane $H \subset X$ can be
  identified with $\p(\HH^0(H,\OO_H(C)))=\p^{d+2}$.
  The dimension count leads thus to argue that $\sH^{d+2}_1(X)$ should
  be of dimension $2d+3$, locally around the point corresponding to $C$.
  This is a contradiction, so we have proved that 
  a general deformation $D$ of $C$ lies in no hyperplane.

  Now, the correspondence of Hartshorne-Serre associates with $D$ a rank-$2$ vector
  bundle $E$ fitting into:
  \begin{align}
    \label{ED}
    & 0 \to \OO_X(-1) \to E \to \sI_{D,X}(1) \to 0, \\
    \nonumber & 0 \to \sI_{D,X}(1) \to \OO_X(1) \to \OO_{D}(1) \to 0.
  \end{align}

  Computing Chern classes we get $c_1(E)=0$, $c_2(E)=2$.
  Taking cohomology of the above sequences, and of the same 
  sequences twisted by $\OO_X(-1)$,  
  immediately says that $E$ is stable and satisfies \eqref{instanton-i2}.
  To compute $\Ext^2_X(E,E)$, note that this group is isomorphic to
  $\HH^2(X,E \ts E)$ and tensor the sequences in \eqref{ED} by $E$.
  We have  shown $\HH^j(X,E(-1))=0$ for all
  $j$, and it is easy to show that $\HH^2(X,E(1))=0$.
  Therefore, to prove $\HH^2(X,E \ts E)=0$, it suffices to show 
  $\HH^1(D,E(1)|_{D})=0$. But we have $E(1)|_{D} \simeq \cN_{D,X}$, and
  since $D$ is a general deformation of $C$ and
  $\HH^1(C,\cN_{C,X})=0$, we also have $\HH^1(D,\cN_{D,X})=0$.
  We have thus proved $\Ext^2_X(E,E)=0$.

  It remains to find a line $L \subset X$ with $E|_{L} \simeq \OO_L^2$.
  We choose $L$ in $S$ so that $L \cap C$ is a single point $x$.
  Then we have $\sI_{C,X}(1) \ts \OO_L \simeq \OO_x \oplus \OO_L$.
  Therefore, tensoring the first exact sequence of \eqref{ED} by
  $\OO_L$, we get a surjection of $E|_{L}$ to $\OO_x \oplus \OO_L$.
  Therefore $E|_{L}$ cannot be $\OO_L(t) \oplus \OO_L(-t)$ for any
  $t>0$. Hence $E|_{L}$ is trivial. Note that this is an open
  condition on the variety of lines contained in $X$, so it takes
  place for a general line.
  \end{step}

  Let us now take care of the induction step.
  This is very similar to the argument used in 
  \cite[Theorem 3.9]{brafa1:arxiv}.
  We give a proof only accounting for the differences with respect to
  that paper.

  \begin{step}[\it Induction step: defining a non-reflexive sheaf with
    increasing $c_2$]
    We take a $k$-instanton $E$ satisfying \eqref{smooth-induction}
    for a general line $L\subset X$.
    We want to construct a $(k+1)$-
    instanton $F$ again satisfying \eqref{smooth-induction} for some
    line $L \subset X$, a deformation of a sheaf $G$ with
    $c_1(G)=0$, $c_2(G)=k+1$.
    To do this, we choose a projection $\pi : E \to \OO_L$ and we
    define $G = \ker(\pi)$, so we have:
    \begin{equation}
      \label{G}
      0 \to G \to E \to \OO_L \to 0.
    \end{equation}
    It is easy to prove that $G$ is a stable
    sheaf of rank $2$ with $c_1(G)=0$, $c_2(G)=k+1$ and
    $c_3(G)=0$, and using \eqref{G} we see also $\HH^i(X,G(-1))=0$ for
    all $i$.
    Further, applying $\Hom_X(-,G)$ to \eqref{G}, it is not hard to prove $\Ext^2_X(G,G)=0$.
    Then $\Mo_X(2,0,k+1)$ is smooth at the point corresponding to
    $G$, and by a Hirzebruch-Riemann-Roch computation it has dimension $4(k+1)-3
    = 4k+1$.
    Moreover, tensoring \eqref{G} by $\OO_L$ and using
    $\TTor_1^{\OO_X}(\OO_L,\OO_L) \simeq \cN_{L,X}^* \simeq \OO_L^2$ for a
    general choice of $L$, one sees that $\HH^0(X,G \ts \OO_L(-1))=0$.
\end{step}

  \begin{step}[\it Induction step: deforming to a locally free sheaf]
    We let now $F$ be a general deformation of $G$ in
    $\Mo_X(2,0,k+1)$.
    By semicontinuity of Ext sheaves (see
    \cite{banica-putinar-schumacher}), we get that $F$ will satisfy \eqref{instanton-i2},
    $\Ext^2_X(F,F)=0$, and $\HH^0(X,F \ts \OO_L(-1))=0$.
    So it only remains to prove that $F$ is locally free, for then $F$ is a
    $(k+1)$-instanton satisfying \eqref{smooth-induction}.

    To achieve this, one first notes that, for any line $L'\subset X$,
    in a neighborhood in $\sH^1_0(X)$ of $L$,
    and for $E'$ in a neighborhood in $\Mo_X(2,0,k)$ of $E$, the sheaf $F$ cannot fit into an exact
    sequence of the form:
    \begin{equation}
      \label{E'}
      0 \to F \to E' \xr{\pi'} \OO_{L'} \to 0.      
    \end{equation}
    Indeed, the sheaves fitting into such sequence form a family of
    dimension $4k-3+2+1=4k$, indeed we have $4k-3$ choices for $E'$, a
    $2$-dimensional family of lines contained in $X$ for the choice of
    $L'$ and a $\p^1=\p(\HH^0(L',E'|_{L'}))$ for the choice of $\pi'$.
    But $F$ moves in a smooth (open) part of $\Mo_X(2,0,k+1)$ of
    dimension $4k+1$.

    Then, one proves that if $F$ was not locally free, it would have
    to fit into \eqref{E'}, thus finishing the proof.
    To get this, one takes the double dual:
    \begin{equation}
      \label{doubledual}
      0 \to F \to F^{**} \to T \to 0,
    \end{equation}
    where $T$ is a coherent torsion sheaf having dimension $\le 1$.
    What we have to show is now that $T$ takes the form $\OO_{L'}$, for
    some line ${L'}$ contained in $X$, and that $F^{**}$ is a deformation
    of $E$ in $\Mo_X(2,0,k)$.

    Let us first prove $T\simeq \OO_{L'}$ for some line ${L'}\subset X$.
    To achieve this, it suffices to show the three conditions $\HH^0(X,T(-1))=0$, $c_2(T)=-1$ and $c_3(T)=0$.
    Indeed, $c_2(T)=-1$ means that $T$ is a sheaf of generic rank $1$
    on a curve of degree $1$, which is thus a line ${L'}$ contained in
    $X$. The first condition ensures that $T$ is purely
    $1$-dimensional, i.e., $T$ is torsion-free on ${L'}$ and
    the support of $T$ has no isolated points, so that $T \simeq
    \OO_{L'}(t)$ for some $t$. The condition $c_3(T)=0$ then amount to $t=0$. 
    We already have $c_2(T) \le -1$ since $T$ is supported in
    codimension $2$.

    Now, we first see that $\HH^0(X,F^{**}(-1))=0$.
    Indeed, a non-zero global section of $F^{**}(-1)$ would induce,
    via \eqref{doubledual}, a subsheaf $K$ of $F$ having $c_1(K)=1$,
    which contradicts $F$ being stable.
    Then, from $\HH^1(X,F(-1))=0$ we get
    $\HH^0(T(-1))=0$.

    Next comes the semicontinuity argument.
    Take cohomology of \eqref{doubledual} twisted by $\OO_X(t)$,
    and note that $F^{**}$ is reflexive, so $\HH^1(X,F^{**}(t))=0$
    for $t \ll 0$, by \cite[Remark 2.5.1]{hartshorne:stable-reflexive}
    (this remark is formulated for $\p^3$, but it holds in fact for
    any smooth projective threefold).
    We get $\hh^1(X,T(t))
    \le \hh^2(X,F(t))$ for $t\ll 0$.
    Recall that $c=c_3(F^{**}) \ge 0$ since $F^{**}$ is reflexive,
    and note that $c$ and $c_2(T)$ are invariant under twist by
    $\OO_X(t)$, so $c_3(T(t))=-2t c_2(T) + c$.
    Using Hirzebruch-Riemann-Roch we get:
    \[
    \hh^1(X,T(t)) = - \chi(T(t)) = c_2(T)(t+1) - c/2, \qquad \mbox{for $t\ll 0$}.
    \]
    On the other hand, by semicontinuity we have $\hh^2(X,F(t)) \le
    \hh^2(X,G(t)) = \hh^1(X,\OO_{L}(t)) = -(t+1)$ (by \eqref{G}) for $t \ll 0$.
    Summing up, we have:
    \[
    -c_2(T)(t+1) + c/2 \le -(t+1), \qquad \mbox{for $t\ll 0$}.
    \]
    This clearly implies $c_2(T) \ge -1$, hence $c_2(T)=-1$ so by the
    inequality above (and $c\ge 0$) we deduce $c=0$ hence also
    $c_3(T)=0$. This finishes the proof that $T \simeq \OO_{L'}$ for some
    line ${L'}\subset X$.

    To conclude, we have to show that $F^{**}$ is a deformation of $E$
    in $\Mo_X(2,0,k)$.
    But $c_2(T)=-1$ and  $c_3(T)=0$, so $c_2(F^{**})=k$ and
    $c_3(F^{**})=0$, which implies that $F^{**}$ has the same Chern classes as
    $E$.
    Therefore, $F^{**}$ is clearly a flat deformation of $E$, and as
    such is also a semistable sheaf by \cite{maruyama:openness}, so $F^{**}$ lies in a neighborhood
    of $E$ in $\Mo_X(2,0,k)$.
\end{step}

\end{proof}

\subsection{Instanton bundles on Fano threefolds of degree 5}

Here, we focus on the case when $X$ satisfies $\HH^3(X)=0$, in other
words to the case when the derived category of $X$ is finitely
generated, and study instanton bundles in a monad-theoretic fashion.

In this case, the threefold $X$
is obtained fixing a $5$-dimensional vector space $U$,
and cutting $\bG(2,U^*) \subset \p^9$ 
with a $\p^6 \subset \p^9$, i.e.
\[
    X = \bG(2,U^*) \cap \p^6 \subset \p^{9} = \p(\wedge^2 U),
\]
where the $\p^{6}$ is chosen so that $X$ is smooth of
dimension $3$. This threefold is usually denoted as $V_5$, as its
degree $d_X=H_X^3$ is $5$.
The choice of $\p^6=\p(V)$ in $\p^9=\p(\wedge^2 U)$ corresponds to the
choice of a $3$-dimensional subspace $B$ of $\wedge^2 U$, so that
$V=\wedge^2 U/B=\HH^0(X,\OO_X(1))$.
In other words, a $2$-dimensional vector subspace $\Lambda$
of $U^*$ is an element of $X$ if the composition
$\sigma_\Lambda : \Lambda \ts \Lambda \to U^* \ts U^* \to \wedge^2 U^*
\to B^*$ is zero.

Let us now describe the moduli space $\MI_X(k)$ of instanton bundles
in this case in terms of nets of quadrics (in fact nets of conics).
Given an integer $k\ge 2$, we fix a vector space $I$ of dimension
$k$, and we consider the space $I \ts U$, and the space
$\bigwedge^2(I\ts U)$ of skew-symmetric 
vectors on it.
An element $\omega$ of this space can be regarded as a skew-symmetric
map $I^* \ts U^* \to I \ts U$, so for all integers $j$, $\bigwedge^2(I\ts U)$
contains the locally closed subvarieties of maps of given (necessarily even) rank:
\[
R_{j} = \{\omega \in \bigwedge^2(I\ts U)  \, | \, \rk(\omega) = 2j\}.
\]
We consider the space of nets of quadrics $\s^2I \ts B$ as a subspace of $\bigwedge^2(I\ts U)=\s^2 I
\ts \wedge^2 B \oplus \wedge^2 I \ts \s^2 U$, via the embedding $B
\mono \wedge^2 U$.
We have thus the varieties of nets of quadrics of fixed rank:
\[M_j = R_j \cap \s^2I \ts B.\]
We will also need to consider the space $\Delta$ of degenerate vectors
$\omega$, namely:
\[
\Delta = \{\omega \in \bigwedge^2(I\ts U)  \mid \exists \Lambda=\C^2
\subset U^* \, \mbox{with $\sigma_\Lambda = 0$ and $\omega|_{\Lambda \ts I^*}$ is not injective}\}.
\]
The group $\GL(k)=\GL(I)$ acts on $I$, hence on the nets of quadrics
$\s^2I \ts B$.
Also, $\GL(k)$ acts on $M_j$, for all $j$.

\begin{THM} \label{V5}
  There is a surjective $\GL(k)/\{\pm 1\}$-fibration $M_{2k+1} \setminus \Delta \to
  \MI_X(k)$ which is a geometric quotient for the action of $\GL(k)$.
\end{THM}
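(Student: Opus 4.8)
\emph{Proof plan.}
As in the quadric case (Lemmas~\ref{vanishing-Q}--\ref{quadric-basic}), Lemma~\ref{nulli} and the Beilinson spectral sequence \eqref{complex} for the exceptional collection of $V_5$ show that every instanton $E$ on $X=V_5$ is the cohomology of a monad of the form \eqref{E0}, and that $\MI_X(k)\simeq\sQ^\circ_{X,k}/G_k$ is a geometric quotient with $G_k=\GL(I)\times\Sp(W,D)$ (here $\eX=0$, so $D$ is symplectic). The real task is to re-express $\sQ^\circ_{X,k}$, together with its $\Sp(W,D)$-action, as the net of quadrics $M_{2k+1}\setminus\Delta$. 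I would set things up as follows: on $X=V_5\subset\bG(2,U^*)$ let $\sU\mono U^*\ts\OO_X$ be the restriction of the tautological subbundle, so $\wedge^2\sU\simeq\OO_X(-1)$, $\sU^*\simeq\sU(1)$, $U\simeq\HH^0(X,\sU^*)$, and write $e:=\iota^\vee\colon U\ts\OO_X\epi\sU^*$ for the evaluation map, dual to $\iota\colon\sU\mono U^*\ts\OO_X$. The normalized collection of $V_5$ is $\Db(X)=\langle\OO_X(-1),\sU,\OO_X,\sU^*\rangle$ (see \cite{orlov:V5}), i.e.\ \eqref{E0} holds with $\EE_1=\sU$, $\EE_2=\OO_X$, $\EE_3=\sU^*$ and $U=\Hom_X(\OO_X,\sU^*)\simeq\Hom_X(\sU,\OO_X)$, consistently with the rank count $\dim W=4k+2$, $\dim I=k$; thus $E$ is the cohomology of $I^*\ts\sU\xr{D\,A^\tra}W^*\ts\OO_X\xr{A}I\ts\sU^*$ with $A\in I\ts W\ts U$.

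\emph{From $A$ to a net of quadrics --- the main step.} Viewing $A$ as a linear map $\tilde A\colon W^*\to I\ts U$, the bundle map $A$ factors as $W^*\ts\OO_X\xr{\tilde A\ts\id}I\ts U\ts\OO_X\xr{\id\ts e}I\ts\sU^*$, and a direct computation gives
\[
A\,D\,A^\tra=(\id_I\ts e)\circ(\omega_A\ts\id_{\OO_X})\circ(\id_{I^*}\ts\iota),\qquad\omega_A:=\tilde A\,D\,\tilde A^\tra\in\wedge^2(I\ts U).
\]
So $A\,D\,A^\tra$ is always the image of $\omega_A$ under the natural map $\wedge^2(I\ts U)=\s^2 I\ts\wedge^2 U\oplus\wedge^2 I\ts\s^2 U\to I^{\ts2}\ts\Hom_X(\sU,\sU^*)$ induced by $e$. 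Now $\Hom_X(\sU,\sU^*)=\HH^0(\s^2\sU^*)\oplus\HH^0(\wedge^2\sU^*)$ with $\HH^0(\wedge^2\sU^*)=\HH^0(\OO_X(1))=\wedge^2 U/B$, and that natural map is the quotient $\wedge^2 U\to\wedge^2 U/B$ (kernel $B$) on the first summand and is injective on $\wedge^2 I\ts\s^2 U$ (since $\s^2 U=\s^2\HH^0(\sU^*)\to\HH^0(\s^2\sU^*)$ is injective --- the planes $\Lambda$, $[\Lambda]\in X$, sweep out $U^*$); hence its kernel is exactly $\s^2 I\ts B$, so $A\in\sQ_{X,k}\iff\omega_A\in\s^2 I\ts B$. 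Evaluating at a point $[\Lambda]\in X$ identifies $\iota|_{[\Lambda]},e|_{[\Lambda]}$ with the inclusion $\Lambda\mono U^*$ and the projection $U\epi\Lambda^*$, hence $(A|_{[\Lambda]})^\tra$ with the restriction of $\tilde A^\tra\colon(I\ts U)^*\to W$ to $\Lambda\ts I^*$; since $\tilde A=\HH^0(A)$ has kernel $\HH^0(\ker A)$, which is $0$ when $A\in\sQ^\circ_{X,k}$ (from $0\to I^*\ts\sU\to\ker A\to E\to0$ and $\HH^0(\sU)=\HH^0(E)=0$ by stability), one gets $A\in\sQ^\circ_{X,k}\iff\omega_A\notin\Delta$; moreover, $\tilde A$ injective forces $\omega_A$ to be the pull-back of $D$ along the surjection $\tilde A^\tra$, so $\rk\omega_A=\dim W=4k+2$. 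Thus $A\mapsto\omega_A$ maps $\sQ^\circ_{X,k}$ onto $M_{2k+1}\setminus\Delta$.

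\emph{Quotients.} If $\omega_A=\omega_{A'}$ with $\tilde A,\tilde A'$ injective, then $\im\tilde A=\im\tilde A'=\im\omega_A$ and the two identifications of the symplectic form $\omega_A$ on this $(4k+2)$-dimensional space with $(W,D)$ differ by $\eta\in\Sp(W,D)$, whence $A'=A\,\eta^\tra$; conversely every $\omega\in M_{2k+1}\setminus\Delta$ admits such an identification, unique up to $\Sp(W,D)$, giving $A\in\sQ^\circ_{X,k}$ with $\omega_A=\omega$. Since $\im\omega$ defines a rank-$(4k+2)$ subbundle over $M_{2k+1}\setminus\Delta$ and $\Sp_{4k+2}$ is a special group, $A\mapsto\omega_A$ is a Zariski-locally trivial principal $\Sp(W,D)$-bundle, hence a geometric quotient; combined with $\MI_X(k)\simeq\sQ^\circ_{X,k}/G_k$ this realizes $\MI_X(k)$ as the geometric quotient of $M_{2k+1}\setminus\Delta$ by the residual group $G_k/\Sp(W,D)=\GL(I)=\GL(k)$, with the induced morphism $M_{2k+1}\setminus\Delta\to\MI_X(k)$ surjective. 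Finally, on $\s^2 I\ts B$ the group $\GL(I)$ acts by $q\mapsto\xi^{-1}q\,\xi^{-\tra}$, with kernel $\{\pm\id_I\}$, and since $E$ is stable the stabilizer in $G_k$ of any $A\in\sQ^\circ_{X,k}$ is $\{\pm(\id_I,\id_W)\}$; hence $\GL(k)/\{\pm1\}$ acts freely on $M_{2k+1}\setminus\Delta$ and the quotient map is a $\GL(k)/\{\pm1\}$-fibration.

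\emph{Main obstacle.} The crux is the middle step: the description of $\Hom_X(\sU,\sU^*)$ and, above all, the precise compatibility of the multiplication-by-$e$ map with the $\s^2I\ts\wedge^2U\oplus\wedge^2I\ts\s^2U$ decomposition --- this is exactly what converts the quadratic equation $A\,D\,A^\tra=0$ into membership in the linear subspace $\s^2 I\ts B$ of nets of quadrics. A secondary, more routine point is to run the constructions of the last two steps in families, so that the words ``$\GL(k)/\{\pm1\}$-fibration'' and ``geometric quotient'' have their literal meaning.
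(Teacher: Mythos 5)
Your plan is correct, and its core is the same as the paper's argument for Theorem~\ref{V5}: the dictionary $A\leftrightarrow\omega=\tilde A\,D\,\tilde A^{\tra}$, the observation that $A\,D\,A^{\tra}$ is the image of $\omega$ in $\HH^0(X,\bigwedge^2(I\ts\sU^*))\simeq \wedge^2I\ts\s^2U\oplus\s^2I\ts V$ so that the quadratic equation $A\,D\,A^{\tra}=0$ becomes $\omega\in\s^2I\ts B$, the identification of surjectivity of $A$ with $\omega\notin\Delta$ via evaluation at points $[\Lambda]\in X$, and the fibre analysis through the distinguished symplectic structure with stabilizer $\pm1$ are exactly the steps of the paper's proof (combined with Proposition~\ref{monad-5}). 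The one genuine divergence is the last step: the paper obtains the geometric-quotient statement by quoting GIT semistability of the nets from \cite[Proposition 4.9]{kuznetsov:instanton}, whereas you deduce it from the already established geometric quotient $\sQ_{X,k}^\circ/G_k\simeq\MI_X(k)$ (Theorem~\ref{generale}, i.e.\ Section~\ref{section-quotient}) by showing that $A\mapsto\omega_A$ is a Zariski-locally trivial principal $\Sp(W,D)$-bundle over $M_{2k+1}\setminus\Delta$ ($\Sp$ being special), so that the residual $\GL(I)$-quotient is geometric by descent in stages; this is a legitimate alternative that avoids the external GIT input, at the price of checking that $\im(\omega)$ is a symplectic subbundle over $M_{2k+1}\setminus\Delta$ and that quotients descend along the principal bundle. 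A small further difference: where the paper simply asserts $\HH^0(X,\s^2\sU^*)\simeq\s^2U$, you only need (and prove) injectivity of $\s^2U\to\HH^0(X,\s^2\sU^*)$; your sweeping argument is sound and in fact easy, since for any $u\in U^*$ the planes containing $u$ form a linear $\p^3\subset\p^9$ which must meet the $\p^6$ cutting out $X$, so every vector of $U^*$ lies on some $\Lambda$ with $[\Lambda]\in X$.
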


\subsubsection{Derived category of Fano threefolds of degree 5}

We denote by $\sU$ the rank $2$ universal sub-bundle on $X$, defined by
restriction from $\bG(2,U^*)$.
 We have $\Hom_X(\sU,\OO_X) \simeq \HH^0(X,\sU^*) \simeq U$, and
 $c_1(\sU)=-1$,  $c_2(\sU)=2$, $\sU^*(-1)\simeq \sU$. 
We have a canonical
exact sequence:
\begin{equation}
  \label{universal-5}
0 \to \sU \to U^* \ts \OO_X \to \sQ \to 0,  
\end{equation}
where $\sQ$ is the universal rank-$3$ quotient bundle.
According to \cite{orlov:V5} (see also \cite{dani:v5}), a decomposition of the category $\Db(X)$ in this case
has the form:
\begin{align*}
 & \Db(X) = \big\langle \OO_{X}(-1), \sU , \sQ^*, \OO_{X} \big\rangle,
\intertext{and its dual collection is:}
 & \Db(X) = \big\langle \OO_{X}(-1), \sQ(-1), \sU, \OO_{X} \big\rangle.
\end{align*}
We can use mutations to obtain an exceptional collection having the
form that we need.
Right mutating $\sQ^*$ through $\OO_X$, we get $\sU^*$ and this gives the decomposition  
\eqref{Db} satisfying \eqref{E0}, namely:
\begin{align}
 \nonumber & 
 \Db(X) = \big\langle \OO_{X}(-1), \sU , \OO_{X},\sU^* \big\rangle,
\intertext{and the dual collection:}
 \label{col-4-V5} &
 \Db(X) = \big\langle \OO_{X}(-1), \sQ(-1), \sR, \sU \big\rangle,
\end{align}
where $\sR$ is the kernel of the natural evaluation $U \ts \sU \to \OO_X$.
The following vanishing results are well-known:
\[
\HH^i(X,\sU(t))=0, \qquad \mbox{for all $t$ if $i=1,2$, and for $t\le
  0$ if $i=0$.}
\]

\subsubsection{Monads for instanton bundles on Del Pezzo threefolds of degree 5}

We will write the monad associated with an instanton bundle on a
Del Pezzo threefold $X$ of degree $5$.
Fix an integer $k\geq 2$, a vector space $I$ of dimension $k$, a
vector space $W$ of dimension $4k+2$ and a skew-symmetric isomorphism $D : W \to W^*$.
We look at an element $A$ of $I \ts W \ts U$ as a map:
\[
A : W^* \ts \OO_X \to I  \ts \sU^*,
\]
and we can consider:
\[
D \, A^\tra : I^{*} \ts \sU \to  W^* \ts \OO_X.
\]
Here the variety $\sQ_k=\sQ_{X,k}$ is given by $\sQ_k=\{A \in I \ts W
\ts U \mid A D A^\tra = 0\}$.

\begin{prop} \label{monad-5}
  Let $E$ be a sheaf in $\MI_X(k)$ with $k \ge 2$.
  Then $E$ is the cohomology of a monad:
  \[
  I^* \ts \sU \xr{D \, A^\tra} W^* \ts \OO_{X} \xr{A} I\ts \sU^{*},
  \]
  with $A,I,W,D$ as above.
  Conversely, the cohomology of such a monad is a $k$-instanton bundle on
  $X$.
\end{prop}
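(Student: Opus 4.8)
The plan is to follow the same strategy already used for the quadric threefold in Lemma \ref{quadric-basic}, adapting it to the derived category data of $V_5$. First I would compute the cohomology table of a $k$-instanton $E$ on $X$ with respect to the dual collection \eqref{col-4-V5}. By Lemma \ref{nulli} we have $\HH^i(X,E(-1))=0$ for all $i$ (since $\rX=1$, $\eX=0$ here), which kills the leftmost column, and stability plus Serre duality (using $E^*\simeq E$) kills the rightmost column $\HH^\bullet(X,E\ts\sU)$ in degrees $\ne 1$. The middle two columns, involving $\sQ(-1)$ and $\sR$, require tensoring $E$ with the defining exact sequences \eqref{universal-5} and $0\to\sR\to U\ts\sU\to\OO_X\to 0$, then chasing cohomology with the help of the vanishing in Lemma \ref{nulli} and the quoted vanishing $\HH^i(X,\sU(t))=0$. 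A Hirzebruch–Riemann–Roch computation then pins down the dimensions: one expects $\hh^1(X,E\ts\sR)=\dim W=4k+2$ in the $\sR$-slot (matching the middle Beilinson term $\OO_X$) and $\hh^1$ equal to $\dim I=k$ in the $\sU$-slot, with $\hh^1(X,E\ts\sQ(-1))$ giving the $\OO_X(-1)$ term, necessarily of dimension $k$ as well by the symmetry forced below. Feeding this table into the Beilinson-type complex \eqref{complex} produces a three-term monad $I^*\ts\sU \xr{A'} W^*\ts\OO_X \xr{A} I\ts\sU^*$ with $A'$ injective and $A$ surjective, and whose cohomology is $E$; setting $I=\HH^1(X,E)$, $W=\HH^1(X,E\ts\sR)^*$ this gives the shape of the monad up to identifying $A'$ with $D\,A^\tra$.

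The second ingredient is to produce the skew-symmetric duality $D$. Since $E$ is locally free with $c_1(E)=0$, there is a canonical skew-symmetric isomorphism $\kappa:E\xr{\sim}E^*$. As in the quadric case, $\kappa$ lifts to an isomorphism of the monad with its dual monad $I^*\ts\sU \xr{A^\tra} W\ts\sU^*(\dots)\epi I\ts\OO_X$ — here one uses the self-duality $\sU^*(-1)\simeq\sU$ to recognize the dual of the middle term $W^*\ts\OO_X$ as $W\ts\OO_X$, so that the induced middle isomorphism $\tilde\kappa$ lands in $\HH^0(X,\wedge^2(W^*\ts\OO_X))\simeq\wedge^2 W^*$. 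Skew-symmetry of $\kappa$ forces $\tilde\kappa$ to be a skew-symmetric element $D\in\wedge^2 W^*$, i.e. an isomorphism $W\to W^*$ with $D^\tra=-D$; and commutativity of the lift gives $A'=D\,A^\tra$ as required. (Note this is where the parity differs from the quadric case: $\eX=0$ forces $D$ skew-symmetric, consistent with $\rG(W,D)=\Sp(W,D)$ in \eqref{valori}.) One must check the relevant $\Ext$/$\Hom$ groups vanish so that the lift is unique up to scalar and the "other" summand of $\wedge^2$ of the middle term contributes nothing — this is the analogue of the vanishing of $\HH^0(Q,\wedge^2 W^*\ts\s^2\spi^*(-1))$ in Lemma \ref{quadric-basic}, and here amounts to $\HH^0(X,\s^2\sU^*(-1))=0$ type statements, provable from \eqref{universal-5}.

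For the converse, given a monad $I^*\ts\sU \xr{D\,A^\tra} W^*\ts\OO_X \xr{A} I\ts\sU^*$ with $A$ surjective and $D$ a skew duality satisfying $A\,D\,A^\tra=0$ (so that it is genuinely a complex), let $E$ be its cohomology. A Chern class computation using $c_1(\sU)=-1$, $c_2(\sU)=2$, $\dim I=k$, $\dim W=4k+2$ gives $c_1(E)=0$, $c_2(E)=k$. Stability of $E$ follows by showing $\HH^0(X,E)=0$: the monad expresses $E$ as a subquotient and the vanishing $\HH^0(X,\sU)=0$ together with $\HH^0(X,\OO_X(-1))=0$ (after twisting, or directly) forces $\HH^0(X,E)=0$, while $E^*\simeq E$ then also gives $\HH^0(X,E(-1))=0$. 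Finally, twisting the monad by $\OO_X(-1)$ and taking cohomology — using $\HH^i(X,\sU(-1))=0$ for $i=1,2$ and $\HH^0(X,\sU(-1))=0$, plus $\HH^\bullet(X,\OO_X(-1))$ and $\HH^\bullet(X,\sU^*(-1))=\HH^\bullet(X,\sU)$ — yields $\HH^1(X,E(-1))=0$, so $E\in\MI_X(k)$.

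The main obstacle I expect is the bookkeeping in the middle two columns of the cohomology table: unlike the quadric case, where the middle of the collection is the single bundle $\spi$ with very clean cohomology, here one has two non-line-bundle exceptional objects $\sQ(-1)$ and $\sR$, and one must carefully run the relevant exact sequences (and possibly iterate, e.g. tensoring $\sR$-sequences by $E$ and then $\sU$-sequences) to get all the required vanishings and to verify that the HRR dimension counts are consistent with a monad of exactly the stated shape rather than a longer complex. The self-duality step and the converse are, by contrast, essentially formal once the monad shape is established, mirroring the quadric argument.
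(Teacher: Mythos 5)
Your forward direction follows the paper's proof essentially verbatim: compute the cohomology table of $E$ against the dual collection \eqref{col-4-V5} using \eqref{universal-5}, the sequences \eqref{prima}--\eqref{seconda} for $\sR$, Lemma \ref{nulli}, stability of $\sU,\sQ$ and Hirzebruch--Riemann--Roch; then lift the skew duality $\kappa:E\to E^*$ to the monad. (One small correction there: since the middle term is $W^*\ts\OO_X$, the decomposition of $\wedge^2(W^*\ts\OO_X)$ has only the summand $\wedge^2 W^*\ts \s^2\OO_X$ --- the other summand involves $\wedge^2\OO_X=0$ --- so no vanishing of the type $\HH^0(X,\s^2\sU^*(-1))=0$ is needed; that step is formal.)

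The genuine gap is in the converse, at the stability step. You claim that $\HH^0(X,E)=0$ follows from $\HH^0(X,\sU)=0$ and $\HH^0(X,\OO_X(-1))=0$ because $E$ is a subquotient of the monad terms. This works on the quadric, where the middle term $W^*\ts\spi$ has no sections, but it fails here: the middle term is $W^*\ts\OO_X$, and chasing the two short exact sequences cut out of the monad gives $\HH^0(X,E)\simeq\ker\bigl(\phi_A:W^*\to I\ts U\bigr)$ with $\dim W^*=4k+2$ and $\dim(I\ts U)=5k$, which is not zero for any obvious reason. The paper has to argue differently: a nonzero section of $E\simeq E^*$ gives a nonzero map $s:E\to\OO_X$, which (using $\Hom_X(\sU^*,\OO_X)=\Ext^1_X(\sU^*,\OO_X)=0$) extends to $\coker(D\,A^\tra)\to\OO_X$ and hence pulls back to a nonzero, therefore surjective, map $W^*\ts\OO_X\to\OO_X$; then $\ker(s)$ is reflexive of rank $1$ with $c_1=0$, hence $\OO_X$, forcing $E\simeq\OO_X^2$ and contradicting $c_2(E)=k\ge 2$. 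Some argument of this kind (exploiting the self-duality and the rank-one kernel, not just vanishing of sections of the monad terms) is needed to close your proof.
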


\begin{proof}
  We have to write the cohomology table \eqref{table} for a
  $k$-instanton $E$ with respect to the collection \eqref{col-4-V5},
  in order to write the complex \eqref{complex} whose cohomology is $E$.
  We first tensor \eqref{universal-5} by $E(-1)$, and use the instanton
  condition \eqref{instanton-i2} together with Lemma \ref{nulli} to
  obtain natural isomorphisms:
  \[
  \HH^{i-1}(X,E \ts \sQ(-1)) \simeq \HH^{i}(X,E \ts \sU(-1)) \simeq \HH^{3-i}(X,E \ts \sU)^*,
  \]
  where the second isomorphism uses Serre duality and the canonical
  isomorphisms $E \simeq E^*$, $\sU^*(-1) \simeq \sU$, $\omega_X \simeq \OO_X(-2)$.
  Now, by stability of $E$, $\sQ$ and $\sU$ (proved in
  \cite{dani:v5}), we get that the above groups vanish for $i=0,1,3$.
  We are only left with the space $\HH^1(X,E \ts \sQ(-1))$, which we
  call $I^*$, and whose dimension can be computed by Hirzebruch-Riemann-Roch and
  equals $k$. This takes care of the second column of \eqref{table},
  and also of the fourth, where the only non-vanishing group is
  $\HH^1(X,E \ts \sU) \simeq I$.

  We have to compute the cohomology groups $\HH^i(X,E \ts \sR)$. 
  By Lemma \ref{nulli} we have $\HH^{i-1}(X,E)=\HH^{i}(X,E(-2))=0$ for $i\ne 2$.
  We have the defining exact sequence:
  \begin{align}
    \label{prima} & 0 \to \sR \to U \ts \sU \to \OO_X \to 0, \\
\intertext{and the resolution of $\sR$, which is a part of a helix on $X$ (see again \cite{dani:v5}):}
    \label{seconda}     & 0 \to \OO_X(-2) \to V^* \ts \OO_X(-1) \to U^* \ts \sQ(-1) \to \sR
    \to 0.
  \end{align}
  Twisting \eqref{prima} by $E$ and using the vanishing we already
  proved, we get $\HH^i(X,E \ts \sR)=0$ for $i=0,3$.
  Twisting \eqref{seconda} by $E$ and again using the vanishing we already
  have, we also get $\HH^2(X,E \ts \sR)=0$.
  Thus we are only left with $\HH^1(X,E \ts \sR)$, we call this space
  $W^*$ and we compute its dimension $4k+2$ by Hirzebruch-Riemann-Roch.

  We have thus proved that a $k$-instanton $E$ is the cohomology of a
  monad of the form \eqref{monad-general}. We still have to check that
  the monad is self-dual. But this is rather clear, indeed
  the skew-symmetric duality $\kappa : E \to E^*$
  lifts to an isomorphism between our resolution and its transpose, and induces a
  skew-symmetric duality $D : W \to W^*$ in the middle term.

  Finally, to check the converse implication, we first note that the
  Chern classes of the cohomology sheaf $E$ of a monad of this form are indeed
  those of a $k$-instanton on $X$.
  In order to see that $E$ is stable, we need to check that
  $\HH^0(X,E) = 0$.
  Note that this space is isomorphic to $\Hom_X(E,\OO_X)$ because $A$
  is surjective so that $E$ is locally free.
  Then, a non-zero global section of $E$ gives a non-zero map $s:E \to
  \OO_X$, which therefore lifts to a map $W^* \ts \OO_X \to \OO_X$. 
  In particular $s$ is surjective.
  Note that $\ker(s)$ is reflexive of rank $1$ with vanishing $c_1$,
  so $\ker(s) \simeq \OO_X$.
  Hence $E$ is an extension of $\OO_X$ by
  $\OO_X$, i.e. $E \simeq \OO_X^2$, which contradicts our Chern class computation.
  
Moreover, twisting the monad by $\OO_X(-1)$, and using the
  cohomology vanishing for $\sU$, we immediately see that
  $E$ satisfies the instanton condition \eqref{instanton-i2}.
\end{proof}

\begin{proof}[Proof of Theorem \ref{V5}]
  Let us consider an element $\omega$ of $\bigwedge^2(I\ts U)$ as a
  skew-symmetric map $I^* \ts U^* \to I\ts U$.
  This induces a commutative diagram:
  \begin{equation}
    \label{4}
  \xymatrix{
    I^* \ts U^* \ar^-{\omega}[r] \ar^-{A_\omega^\tra}[d]& I \ts U  \\
    W \ar^-{D}[r] & W^*, \ar^-{A_\omega}[u]
  }
  \end{equation}
  where $D$ is the skew-symmetric duality induced by $\omega$ on its image $W$,
  and $A_\omega$ is the restriction of $\omega$ to its image.
  Since $\omega$ lies in $R_{2k+1}$ we have $\dim(W)=4k+2$.

  We can now consider $A_\omega$ as a map:
  \[
  A_\omega : W^* \ts \OO_X \to I  \ts \sU^*,
  \]
  and we would like to check that $A_\omega$ satisfies $A_\omega  \, D \, A_\omega^\tra=0$.
  This composition is a skew-symmetric map $I^*\ts \sU \to I \ts \sU^*$,
  that corresponds to a skew-symmetric map $I^* \ts U^* \to I \ts
  U$, which is nothing but $\omega$ itself.

  But the space of skew-symmetric maps $I^*\ts \sU \to I \ts \sU^*$ is:
  \begin{align*}
  \HH^0(X,\bigwedge^2(I \ts \sU^*)) & \simeq
  \wedge^2 I \ts \HH^0(X,\s^2\sU^*) \oplus
  \s^2 I \ts \HH^0(X,\wedge^2 \sU^*) \\
  & \simeq \wedge^2 I \ts \s^2U \oplus
  \s^2 I \ts V.
  \end{align*}
  So, $A_\omega \, D \, A_\omega^\tra$ as an element of
  $\bigwedge^2(I \ts U)$ is obtained by taking $\omega \in \bigwedge^2(I \ts U)$ and
  projecting it on the summand $\wedge^2 I \ts \s^2U \oplus \s^2 I \ts V$.
  But $\omega$ lies in $\s^2 I \ts B$, and $V = \wedge^2 U/B$, so $A_\omega \, D \, A_\omega^\tra=0$.
  Further, the fact that $A$ is a surjective map of sheaves 
  amounts to the fact that $A^\tra$ is an injective map of vector bundles, which is equivalent
  to the non-degeneracy condition for $\omega$ to lie away from $\Delta$.
  So,  $A_\omega$ defines a $k$-instanton bundle according to Proposition 
  \ref{monad-5}. This defines our map $M_{2k+1} \setminus \Delta \to
  \MI_X(k)$, that we denote by $\Psi$, and we let $E_\omega$ be the $k$-instanton bundle
  associated with $\omega$.

  To continue the proof, we show that $\Psi$ is surjective.
  This is essentially Proposition \ref{monad-5}.
  Indeed, any $k$-instanton $E$ is the
  cohomology of a monad given by a map $A$ and a duality $D$, and the
  pair $(A,D)$ gives an element $\omega$ in view of a diagram of the
  form \eqref{4}.
  The vector $\omega$ obtained in this way lies in $R_{2k+1}$ because
  $\dim(W)=4k+2$, and away from $\Delta$, because $A$ is surjective.
  Finally, we have checked that the condition $A_\omega\, D \, A_\omega^\tra=0$
  amounts to ask that $\omega$ lies in $\s^2 I \ts B \subset \bigwedge^2(I \ts U)$.

  In order to analyze the fibres of $\Psi$, we note that $E_\omega$ is equipped with a
  distinguished skew-symmetric duality $\kappa_\omega$, arising from the skew-symmetry of
  the defining monad.
  Moreover, it is easy to see that, given $\omega_1,\omega_2$, 
  there is an isomorphism $(E_{\omega_1},\kappa_{\omega_1}) \to (E_{\omega_2},\kappa_{\omega_2})$
  if and only if $\omega_1$ and $\omega_2$ are in the same $\GL(k)=\GL(I)$-orbit.
  Since $E$ is simple, we have $\omega_i = \pm \id_E$, so the stabilizer of $\omega$ under the $\GL(k)$-action is
  the set of automorphisms of $(E_\omega,\kappa_\omega)$, i.e. $\pm 1$.
  Then the map $\Psi$ has fibers $\GL(k)/\{\pm 1\}$.
  Moreover, according to \cite[Proposition 4.9]{kuznetsov:instanton},
  a net of quadric $\omega$ corresponding to an instanton bundle is
  semistable in the sense of GIT for the action of $\GL(k)$.
  The fact that the quotient is geometric is now clear.
\end{proof}

\begin{rmk}
  I do not know if the moduli space $\MI_X(k)$ affine, if $X$ is a Del Pezzo
  threefold of degree $5$. I do not know if it is smooth and/or
  irreducible for $k \ge 3$ (for $k=2$ it is, as proved in
  \cite{dani:v5}).
  I do not know if there are instantons with an
  $\mathrm{SL}_2$-structure (cf. \cite{faenzi:SL2-instanton} for the case of $\p^3$).
\end{rmk}

\subsection{Instanton bundles on Fano threefolds of degree 4}

Here we let $X$ be a Fano threefold $X$ of Picard number $1$, index
$2$ and degree $4$, so that, if $H_X$ is the (very) ample line bundle
generating $\Pic(X)$, we have $K_X=-2H_X$.
It is well-known that the Del Pezzo threefold $X$ is the complete intersection of two
quadrics in $\p^5$.
We will first recall the structure of the derived category of $X$ and
then state and prove a theorem on the moduli space of instanton
bundles on $X$, where monads are replaced by vector bundles on a curve
of genus $2$.

\subsubsection{Derived category of Fano threefolds of degree 4}

Given the variety $X$, we consider the
 moduli space $\Mo_X(2,1,2)$, which is isomorphic to a smooth curve
$\Gamma$ of genus $2$.
This curve is obtained as a double cover of the projective line representing
the pencil of quadrics vanishing on $X$, ramified along the $6$ points
corresponding to degenerate quadrics.
The moduli space is fine and we denote by $\sE$ a universal sheaf
on $X \times \Gamma$, determined up to a twist by a line bundle on
$\Gamma$. We denote by $\sE_y$ the sheaf in $\Mo_X(2,1,2)$ corresponding
to $y$.
We have:
\[
c_1(\sE) = H_X + N, \qquad c_2(\sE)=2L_X+H_X M+\eta,
\]
where $L_X$ is the class of a line in $X$, $M$ and $N$ are divisor
classes on $\Gamma$ of degree 
respectively $m$ and $2m-1$ (here $m$ is
an integer that we may choose arbitrarily), and $\eta$ lies in
$\HH^{1,2}(X)\ts \HH^{0,1}(\Gamma)$ and satisfies $\eta^2=4$.

It is not difficult to show that, if $\sE_y$ is a sheaf in
$\Mo_X(2,1,2)$, then $\sE_y$ is a stable, locally free, globally
generated sheaf, and that we have:
\begin{equation}
  \label{universal-4}
  0 \to \sE_{y'}(-1) \to \HH^0(X, \sE_y) \ts \OO_X \to \sE_y \to 0,
\end{equation}
where $y'$ is conjugate to $y$ in the $2:1$ cover $\Gamma \to \p^1$.

We denote by $p$ and $q$ the projections of $X \times \Gamma$ onto $X$
and $\Gamma$, and we consider the functors:
\begin{align*}
& \ph :  \Db(\Gamma) \to \Db(X), && \ph(a) = \RR p_*(q^*(a)
\ts \EE), \\
& \phx : \Db(X) \to \Db(\Gamma), && \phx(b) = \RR q_*(p^*(b)
\ts \EE^* \ts q^*(\omega_{\Gamma}))[1], \\
& \phs : \Db(X) \to \Db(\Gamma), && \phs(b) = \RR q_*(p^*(b)
\ts \EE^* \ts p^*(\omega_X))[3].
\end{align*}
We recall that $\ph$ is fully faithful, $\phs$ is left
adjoint to $\ph$, and $\phx$ is right adjoint to $\ph$.
If follows from \cite{bondal-orlov:semiorthogonal-arxiv} that we have the
following semiorthogonal decomposition:
\[
 \Db(X) \simeq \langle \OO_X(-1), \OO_X, \ph(\Db(\Gamma)) \rangle.
\]

An easy computation shows that $\cV=(\phs(\OO_X))^*$ is a vector bundle of rank
$4$ and degree $4(1-m)$ on $\Gamma$.

\subsubsection{Instanton bundles on Del Pezzo threefolds of degree 4
and Brill-Noether loci on curves of genus 2}

The goal of this section will be to prove the following result,
relating $k$-instanton bundles on $X$ to simple vector bundles of higher rank on
$\Gamma$, sitting in a specific Brill-Noether locus.
This locus is given by bundles $\sF$
 on $\Gamma$ having rank $k$ and degree $k(2-m)$ that have at least
 $k-2$ independent global sections when twisted by $\cV=(\phs(\OO_X))^*$.

\begin{THM} \label{dP4}
The map $E \to \phx(E)$ gives:
\begin{enumerate}[i)]
\item \label{prima-parte} an open immersion of $\MI_X(2)$ into the moduli space of
  semistable bundles of rank $2$ on $\Gamma$, so that $\MI_X(2)$ is a
  smooth irreducible fivefold;
\item \label{seconda-parte} for $k\ge 3$, an open immersion of $\MI_X(k)$ into the locus $\mathrm{W}(k)$
  defined as the isomorphisms classes of:
  \[
  \left\{
  \left.
    \begin{array}[h]{cc}
    \mbox{simple bundles $\sF$ on $\Gamma$} \\ 
    \mbox{of rank $k$ and degree $k(2-m)$}
    \end{array} \right| 
    \mbox{$\hh^0(\Gamma,\cV \ts \sF)=k-2$} \right\}
  \]
\end{enumerate}
\end{THM}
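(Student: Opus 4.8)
\smallskip
The plan is to exploit the semiorthogonal decomposition $\Db(X) \simeq \langle \OO_X(-1), \OO_X, \ph(\Db(\Gamma)) \rangle$. Since $\rX = 1$ and $\eX = 0$, Lemma \ref{nulli} says $\HH^\bullet(X, E(-1)) = 0$, which by Serre duality (and $E^* \simeq E$, $\omega_X \simeq \OO_X(-2)$) is the same as $\RRHom_X(E, \OO_X(-1)) = 0$, while stability and Lemma \ref{nulli} give $\HH^0(X,E) = \HH^2(X,E) = \HH^3(X,E) = 0$, hence by Hirzebruch--Riemann--Roch $\hh^1(X,E) = -\chi(E) = k-2$ and $\RRHom_X(E, \OO_X) \simeq \HH^1(X,E)[-1]$. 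First I would form the universal extension
\[
0 \to \HH^1(X,E)^* \ts \OO_X \to \hat E \to E \to 0
\]
associated with the identity under $\Ext^1_X(E, \HH^1(X,E)^*\ts\OO_X) \simeq \End_\C(\HH^1(X,E))$. Then $\hat E$ is locally free, $\RRHom_X(\hat E, \OO_X(-1)) = 0$ (both $E$ and $\OO_X$ are left-orthogonal to $\OO_X(-1)$), and the connecting homomorphism of the extension shows $\RRHom_X(\hat E, \OO_X) = 0$; thus $\hat E \in {}^\bot\langle \OO_X(-1), \OO_X\rangle = \ph(\Db(\Gamma))$, so $\hat E \simeq \ph(\sF)$ with $\sF := \phx(\hat E)$. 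Since $\ph(\Db(\Gamma)) \subseteq {}^\bot\OO_X$ forces $\phx(\OO_X) = 0$, applying $\phx$ to the sequence gives $\phx(E) \simeq \sF$; for $k = 2$ the extension is trivial, $\hat E = E$. In other words $\ph(\phx(E))$ is the universal extension $\hat E$.

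Next I would show that $\sF$ is a simple bundle with the stated invariants. Using $\sE_y^* \simeq \sE_y(-1)$ and slope comparisons for the stable bundles $\sE_y$ (rank $2$, $c_1 = 1$, $c_2 = 2$) and $E$, one gets $\Hom_X(\sE_y, E) = \Hom_X(\sE_y, E(-1)) = \Hom_X(\sE_y, E(-2)) = 0$ for all $y \in \Gamma$; via Serre duality and cohomology-and-base-change this forces $\RR q_*(p^*E \ts \sE^* \ts q^*\omega_\Gamma)$ to be concentrated in degree $1$, so $\phx(E) = \sF$ is an honest sheaf, and $\Hom_X(\sE_y, \hat E) = 0$ rules out torsion, so $\sF$ is locally free. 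Grothendieck--Riemann--Roch for $\ph$ (equivalently: $\ph$ sends the class of a point of $\Gamma$ to that of $\sE_y$, of Chern character $(2, H_X, 0, 0)$) yields $\rk(\sF) = k$ and $\deg(\sF) = k(2-m)$; full-faithfulness of $\ph$ gives $\Ext^i_\Gamma(\sF, \sF) \simeq \Ext^i_X(\hat E, \hat E)$, and a diagram chase on the extension using $\Hom_X(E, \OO_X) = 0$ shows $\End_X(\hat E) = \C$, so $\sF$ is simple. Finally, by the adjunction defining $\phs$,
\[
\HH^0(\Gamma, \cV \ts \sF) = \Hom_\Gamma(\phs(\OO_X), \sF) = \Hom_X(\OO_X, \ph(\sF)) = \HH^0(X, \hat E) \simeq \HH^1(X,E)^*,
\]
of dimension $k-2$; hence $\sF \in \bW(k)$ when $k \ge 3$.

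Running these constructions in families (the sheaf $R^1$ is locally free of rank $k-2$ since the relevant cohomology of $E$ is constant on $\MI_X(k)$, and the relative $\phx$ is a Fourier--Mukai functor) produces a morphism $E \mapsto \phx(E)$, to $\bW(k)$ for $k\ge3$ and to the moduli of semistable bundles for $k=2$. It is injective: $E$ is recovered from $\hat E = \ph(\sF)$ as $\coker(\HH^0(X,\hat E) \ts \OO_X \to \hat E)$, the evaluation having image exactly $\HH^1(X,E)^*\ts\OO_X$ because $\HH^0(X,E) = 0$. For $k=2$ one has $\hat E = E$, so $\Ext^1_X(E,E) \simeq \Ext^1_\Gamma(\sF, \sF)$ and $\Ext^2_X(E,E) \simeq \Ext^2_\Gamma(\sF, \sF) = 0$, so the morphism is étale onto the smooth irreducible fivefold of simple rank-$2$ bundles on $\Gamma$ (smooth since $\Ext^2$ vanishes on a curve, $5$-dimensional since $\ext^1 = 1 - \chi(\sF,\sF) = 5$); inside it every bundle is semistable, since on a genus-$2$ curve a simple rank-$2$ bundle cannot be unstable — a destabilising line subbundle $\sL \subset \sF$ with quotient $\sM$ has $\Hom_\Gamma(\sM,\sL) \ne 0$ whenever $\Ext^1_\Gamma(\sM,\sL) \ne 0$ (the relevant degree-$\le1$ canonical twist, if effective, is again effective on a hyperelliptic curve), which would give a nilpotent endomorphism. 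Being injective, étale, with open image, the map is an open immersion; so $\MI_X(2)$, non-empty by Theorem \ref{exists}, is open in a smooth irreducible fivefold, hence is one.

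For $k \ge 3$ the missing point — and the main obstacle — is the identification of tangent spaces: one must show that $d\phx \colon \Ext^1_X(E,E) \to \Ext^1_\Gamma(\sF,\sF) \simeq \Ext^1_X(\hat E,\hat E)$ is injective with image the tangent space to $\bW(k)$ at $\sF$, and that the image of $\MI_X(k)$ is open (not merely locally closed) in $\bW(k)$. The numerics match ($\chi_\Gamma(\sF,\sF) = -k^2$, and the extension gives $\ext^1_X(\hat E,\hat E) - \ext^1_X(E,E) = (k-2)^2$ modulo the controlled higher $\Ext$'s), and $(k-2)^2$ is the expected codimension of the Brill--Noether locus $\{\hh^0(\cV\ts\sF) = k-2\}$ inside the $(k^2+1)$-dimensional space of simple rank-$k$ bundles. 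The heart of the argument is then to recognise $T_\sF\bW(k)$ as the space of first-order deformations of $\sF$ preserving the $(k-2)$-dimensional space $\HH^0(\Gamma,\cV\ts\sF)$ and to match it, through $\ph$ and the universal extension, with $\Ext^1_X(E,E)$ (a deformation of $E$ deforms $\hat E$ while keeping it a universal extension, i.e. keeping $\hh^0(X,\hat E) = k-2$, and conversely) — this Brill--Noether/deformation bookkeeping on $\Gamma$ versus on $X$ is where the real work lies; the subsidiary technical point is the cohomology-and-base-change verification that $\phx(E)$ is a sheaf with the claimed rank and degree.
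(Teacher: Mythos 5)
Your construction of $\sF=\phx(E)$ via the universal extension, the computation of its rank, degree, simplicity and of $\hh^0(\Gamma,\cV\ts\sF)=k-2$, the injectivity argument, and the treatment of $k=2$ all match the paper's route (Proposition \ref{annullamenti} and the first half of the proof of Theorem \ref{dP4}); the only real divergence there is cosmetic — you prove semistability of $\sF$ for $k=2$ by a direct rank-$2$/genus-$2$ argument, whereas the paper invokes the cohomological criterion $\HH^0(\Gamma,\cV\ts\sF)=\HH^1(\Gamma,\cV\ts\sF)=0$; both work. But for $k\ge 3$ you explicitly leave open ``the identification of tangent spaces,'' and that is a genuine gap: without it you have only an injective map of sets into $\bW(k)$, not an open immersion of schemes, so part \eqref{seconda-parte} is not proved.

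The paper closes exactly this gap with a short homological argument you have all the ingredients for. Apply $\Hom_X(-,E)$ to the functorial sequence $0 \to \HH^1(X,E)^*\ts\OO_X \to \ph(\sF) \to E \to 0$. Since $\Hom_X(\OO_X,E)=\HH^0(X,E)=0$ and $\Ext^2_X(\ph(\sF),E)\simeq\Ext^2_\Gamma(\sF,\sF)=0$ ($\Gamma$ is a curve, and adjunction gives $\Ext^i_X(\ph(\sF),E)\simeq\Ext^i_\Gamma(\sF,\phx(E))=\Ext^i_\Gamma(\sF,\sF)$), one gets an exact sequence
\[
0\to\Ext^1_X(E,E)\to\Ext^1_\Gamma(\sF,\sF)\to\HH^0(\Gamma,\cV\ts\sF)^*\ts\HH^1(\Gamma,\cV\ts\sF)\to\Ext^2_X(E,E)\to 0,
\]
where the middle map is identified, via $\HH^1(X,E)^*\simeq\HH^0(\Gamma,\cV\ts\sF)$ and $\HH^1(X,E)\simeq\HH^1(\Gamma,\cV\ts\sF)$, with the dual Petri map. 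By standard Brill--Noether theory its kernel is $T_\sF\bW(k)$ and its cokernel is the obstruction space of $\bW(k)$ at $\sF$; hence $d\phx$ is an isomorphism onto the tangent space and the obstruction theories agree, which is what makes the (injective) morphism an open immersion. This replaces the dimension bookkeeping you sketch at the end — in particular you do not need to compute the codimension $(k-2)^2$ of the Brill--Noether locus, only to exhibit this four-term exact sequence.
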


The main tool will be the next proposition, that provides the analogue
of the notion on monad for a classical instanton.

\begin{prop} \label{annullamenti}
  Let $E$ be a sheaf in $\MI_X(k)$, and set $\sF = \phx(E)$.
  Then:
  \begin{enumerate}[i)]
  \item \label{i} $\sF$ is a simple vector bundle of rank $k$
    and degree $k(2-m)$ on $\Gamma$;
  \item \label{ii} there is a functorial exact sequence:
    \begin{equation}
      \label{pseudo-monad}
      0 \to \OO_X^{k-2} \to \ph(\sF) \to E \to 0, 
    \end{equation}
    where $\ph(\sF)$ is a simple vector bundle of rank $k$ on $X$;
  \item \label{iii} we have:
  \[
  \hh^0(\Gamma,\cV \ts \sF)=k-2.
  \]
  \end{enumerate}
\end{prop}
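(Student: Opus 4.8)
The plan is to identify $\sF=\phx(E)$ with the $\ph(\Db(\Gamma))$-component of $E$ in the semiorthogonal decomposition $\Db(X)=\langle\OO_X(-1),\OO_X,\ph(\Db(\Gamma))\rangle$ and to read off the three assertions from the resulting decomposition triangle. First I would record the relevant cohomology of an instanton: since $\rX=1$ and $\eX=0$, Lemma~\ref{nulli} gives $\HH^i(X,E(-1))=0$ for all $i$, stability gives $\HH^0(X,E)=\HH^3(X,E)=0$, and Serre duality together with Lemma~\ref{nulli} gives $\HH^2(X,E)\simeq\HH^1(X,E(-2))^{*}=0$; since $\chi(E)=2-k$ by Hirzebruch--Riemann--Roch, this yields $\RR\Gamma(X,E)\simeq\C^{k-2}[-1]$. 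Moreover Lemma~\ref{nulli} places $E$ in ${}^{\bot}\langle\OO_X(-1)\rangle=\langle\OO_X,\ph(\Db(\Gamma))\rangle$.

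Next I would write down the decomposition triangle of $E$ with respect to the pair $\langle\langle\OO_X\rangle,\ph(\Db(\Gamma))\rangle$. Since $\phx$ is right adjoint to $\ph$, the $\ph(\Db(\Gamma))$-component of $E$ is $\ph\phx(E)$, with structural map the counit $\ph\phx(E)\to E$; the $\langle\OO_X\rangle$-component is the left projection $\RR\Hom_X(E,\OO_X)^{\vee}\ts\OO_X$, which Serre duality on $X$ rewrites as $\RR\Gamma(X,E(-2))[3]\ts\OO_X$. Using stability and Lemma~\ref{nulli} one has $\RR\Gamma(X,E(-2))\simeq\HH^2(X,E(-2))[-2]\simeq\HH^1(X,E)^{*}[-2]\simeq\C^{k-2}[-2]$, so this component is $\OO_X^{k-2}[1]$; rotating, the triangle becomes the functorial triangle $\OO_X^{k-2}\to\ph\phx(E)\to E\to\OO_X^{k-2}[1]$. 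This identification of the $\langle\OO_X\rangle$-component is precisely what fixes the integer $k-2$ that governs the numerics in Theorem~\ref{dP4}, and I expect it to be the delicate point.

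It then remains to check that all three terms of this triangle are honest sheaves, after which (i), (ii), (iii) can be harvested. For this I would compute $\Ext^j_X(\sE_y,E)$ for the members $\sE_y$ of the universal family: stability kills $j=0$ (as $\mu(\sE_y)>\mu(E)$) and, dually, $j=3$; for $j=2$, a Chern class computation places $\sE_y^{*}(1)$ in $\Mo_X(2,1,2)\simeq\Gamma$ at the conjugate point, so $\sE_y^{*}\simeq\sE_{y'}(-1)$, and resolving $\sE_{y'}$ via the twist of \eqref{universal-4} by $\OO_X(-1)$, tensoring with $E$ and using $\HH^{\bullet}(X,E(-1))=0$, identifies $\HH^2(X,\sE_y^{*}\ts E)$ with $\HH^3(X,\sE_y(-2)\ts E)\simeq\Hom_X(\sE_y,E)^{*}=0$. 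By cohomology and base change, $\sF=\phx(E)$ is then a vector bundle, of rank $-\chi(X,\sE_y^{*}\ts E)=k$ by Hirzebruch--Riemann--Roch and of degree $k(2-m)$ by a Grothendieck--Riemann--Roch computation using $c_1(\EE)$, $c_2(\EE)$ and $\eta^2=4$. Since $\sF$ is a sheaf, $\ph(\sF)$ has cohomology in degrees $0$ and $1$ only, and the triangle forces $\mathcal{H}^1(\ph(\sF))=0$, so the triangle is a short exact sequence $0\to\OO_X^{k-2}\to\ph(\sF)\to E\to 0$ of sheaves with $\ph(\sF)$ a rank-$k$ vector bundle. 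Simplicity of $\sF$ follows from the adjunction $\Hom_\Gamma(\sF,\sF)\simeq\Hom_X(\ph(\sF),E)$, which by the sequence and $\HH^0(X,E)=0$ equals $\Hom_X(E,E)=\C$; full faithfulness of $\ph$ then gives that $\ph(\sF)$ is simple as well, completing (i) and (ii).

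Finally, with $\phs$ left adjoint to $\ph$, we get $\HH^0(\Gamma,\cV\ts\sF)=\Hom_\Gamma(\phs(\OO_X),\sF)\simeq\Hom_X(\OO_X,\ph(\sF))=\HH^0(X,\ph(\sF))$, and taking global sections in the short exact sequence above (again $\HH^0(X,E)=0$) gives $\HH^0(X,\ph(\sF))\simeq\HH^0(X,\OO_X^{k-2})$, of dimension $k-2$, which is (iii). The main obstacle is the joint bookkeeping in the third paragraph — pinning down the precise shape $\OO_X^{k-2}[1]$ of the $\langle\OO_X\rangle$-component and establishing the vanishing $\Ext^j_X(\sE_y,E)=0$ for $j\neq 1$ — since once the functorial sequence $0\to\OO_X^{k-2}\to\ph(\sF)\to E\to 0$ is in place everything else is formal adjunction manipulation.
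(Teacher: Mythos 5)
Your argument is correct and follows essentially the same route as the paper: the paper also first proves $\HH^i(X,E\ts\sE_y^*)=0$ for $i\neq 1$ to get that $\sF$ is a rank-$k$ bundle, then obtains \eqref{pseudo-monad} as the universal extension of $E$ by $\HH^1(X,E)^*\ts\OO_X$ (which is exactly your decomposition triangle for $\langle\OO_X,\ph(\Db(\Gamma))\rangle$, with the $\langle\OO_X\rangle$-component identified via $\Ext^1_X(E,\OO_X)\simeq\HH^1(X,E)$ rather than via your Serre-duality computation of the left projection), and finishes simplicity and (iii) by the same adjunction manipulations. The only blemish is your claim that $\sE_y^*(1)$ sits at the conjugate point: in fact $\sE_y^*\simeq\sE_y(-1)$ (rank $2$, $c_1=1$), and the conjugate point $y'$ enters only through the kernel term of \eqref{universal-4}; this is harmless, since the resulting $\HH^3(X,E\ts\sE_{\bullet}(-2))$ vanishes by stability and Serre duality at either point.
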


\begin{proof}
  Since the space $\MI_X(k)$ is not empty we must have $k\ge 2$, see
  Lemma \ref{empty}.
  Let $y$ be a point of $\Gamma$.
  We first prove:
  \begin{equation}
    \label{solo1}
    \HH^i(X,E \ts \sE_y^*)=0, \qquad \mbox {for $i\ne 1$}.
  \end{equation}
  Notice that \eqref{solo1} holds for $i=0$ and $i=3$, as is easily
  proved using stability of $E$ and $\sE_y$ and Serre duality.
  Then we only need to prove that \eqref{solo1} holds for $i=2$.
  We use $\sE_y^* \simeq \sE_y(-1)$, we twist \eqref{universal-4} by $E(-1)$ and we use $\HH^i(X,E(-1))$
  for all $i$ (see Lemma \ref{nulli}).
  We get that  \eqref{solo1} holds for $i=2$ if $\HH^3(X,E\ts
  \sE_{y'}(-2))=0$.
  But this vanishing is clear from the stability of $E$ and $\sE_{y'}$
  and from Serre duality.
  
  We have thus proved \eqref{solo1}, and we note that $\sF$ is then a vector bundle on $\Gamma$, whose
  rank equals $\hh^1(X,E \ts \sE_y^*)=k$, which 
  can be computed by Hirzebruch-Riemann-Roch.
  The degree of $\sF$ can by computed by Grothendieck-Riemann-Roch.
  This finishes the proof of \eqref{i}, except the statement that
  $\sF$ is simple.

  To show \eqref{ii}, we note that $E$ lies in ${}^\perp \langle
  \OO_X(-1) \rangle$ by Lemma \ref{nulli}.
  By the same lemma, and by stability of $E$, we have $\HH^i(X,E)=0$ for $i\ne 1$ and by
  Hirzebruch-Riemann-Roch we have $\hh^1(X,E)=k-2$.
  Note also that $\Ext^1_X(E,\OO_X) \simeq \HH^1(X,E)$.
  Then we have a functorial exact sequence:
  \[
      0 \to \OO_X^{k-2} \to \tilde{E} \to E \to 0, 
  \]
  for some universal extension represented by a vector bundle $\tilde{E}$.
  Then, $\tilde{E}$ belongs to ${}^\perp \langle
  \OO_X(-1),\OO_X \rangle = \ph(\Db(\Gamma))$, so $\tilde{E} \simeq
  \ph(a)$ for some object $a$ in $\Db(\Gamma)$.
  But applying $\phx$ to the above sequence one gets $\sF \simeq
  \phx(\ph(a))$ so $\sF \simeq a$, and we get the resolution \eqref{pseudo-monad}.
  
  Note that, to finish the proof of \eqref{i} and \eqref{ii}, it will
  be enough to show  that $\sF$ is simple, because
  $\ph$ is fully faithful, so that $\ph(\sF)$ will be simple as well.
  In order to achieve this, we note that:
  \[
  \Hom_\Gamma(\sF,\sF) \simeq \Hom_X(\ph(\sF),E).
  \]
  Then, applying $\Hom_X(-,E)$ to \eqref{pseudo-monad} we get that
  this group has dimension $1$ since $E$ is stable (hence simple) and $\HH^0(X,E)=0$.

  It remains to show \eqref{iii}.
  Note that $\hh^0(\Gamma,\cV\ts \sF)$ equals the dimension of:
  \[
  \Hom_{\Gamma}(\cV^*,\sF) =  \Hom_{\Gamma}(\phs(\OO_X),\sF) \simeq \Hom_{X}(\OO_X,\ph(\sF)).
  \]
  Note also that, applying $\Hom_X(\OO_X,-)$ to \eqref{pseudo-monad} we get
  $\hh^0(X,\ph(\sF))=k-2$ since $\HH^0(X,E)=0$.
  This gives \eqref{iii}.
\end{proof}

\begin{proof}[Proof of Theorem \ref{dP4}]
  We first note that the map $E \to \phx(E)$ is injective.
  Indeed, after setting $\sF=\phx(E)$, we recall from the proof of Proposition  \ref{annullamenti}
  that $\HH^0(X,\ph(\sF)) \simeq \HH^0(\Gamma,\sF)$ and that this space
  has dimension $k-2$.
  Then, the map $\OO_X^{k-2} \to \ph(\sF)$ appearing in
  \eqref{pseudo-monad} is naturally determined from $\ph(\sF)$ as the natural evaluation of
  sections, so that we can recover its cokernel $E$ only from
  $\ph(\sF)$, i.e. from $\sF$.

  Then, we note that the differential of this map at the point of
  $\MI_X(k)$ given by $E$ identifies the tangent space of $\MI_X(k)$ and the
  space containing obstructions to $\MI_X(k)$ at $E$ with those of $\mathrm{W}(k)$
  at $\sF$.
  Indeed, we first rewrite \eqref{pseudo-monad} functorially as:
  \[
  0 \to \HH^1(X,E)^* \ts \OO_X \to \ph(\sF) \to E \to 0,
  \]
  and we deduce the natural isomorphisms:
  \begin{align*}
    & \HH^1(X,E)^* \simeq \HH^0(X,\ph(\sF)) \simeq
    \HH^0(\Gamma,\cV \ts \sF), \\
     & \HH^1(X,E) \simeq \HH^1(X,\ph(\sF)) \simeq 
    \HH^1(\Gamma,\cV \ts \sF).
  \end{align*}
  Using these isomorphisms, the adjoint pair $(\ph,\phx)$, and
  applying $\Hom_X(-,E)$ to the exact sequence above we get a 
  commutative diagram:
  \[
  \xymatrix@C-3.6ex{
    \Ext^1_X(E,E)  \ar[r] &  \Ext^1_X(\ph(\sF),E)  \ar[r] \ar@{=}[d]&  \HH^1(X,E) \ts \HH^1(X,E) \ar[r] \ar@{=}[d]&  \Ext^2_X(E,E)\\
    & \Ext^1_\Gamma(\sF,\sF) \ar[r] & \HH^0(\Gamma,\cV \ts \sF)^* \ts \HH^1(\Gamma,\cV \ts \sF)
  }
  \]
  This identifies $\Ext^1_X(E,E)$ and $\Ext^2_X(E,E)$ respectively
  with the kernel and the cokernel of the bottom map in this diagram,
  the dual Petri map. In turn, by standard Brill-Noether theory, these are the tangent space at $\sF$ of
  $\mathrm{W}(k)$ and the space of
  obstructions at $\sF$ of $\mathrm{W}(k)$.
  We have thus proved part \eqref{seconda-parte} of the theorem.

  In order to prove part \eqref{prima-parte}, we only need to show that
  $\sF$ is semistable, and to recall the well-known fact that the
  moduli space of semistable bundles of rank $2$ on $\Gamma$ is a smooth
  irreducible variety of dimension $5$.
  For the semistability of $\sF$, simply note that, by Proposition
  \ref{annullamenti}, we have $\HH^i(\Gamma,\cV \ts \sF)=0$ for
  $i=0,1$.
  This is a well-known sufficient condition for semistability.
\end{proof}


\section{Odd instanton bundles on prime Fano threefolds}
\label{i1}

Here we study the case of prime Fano threefolds, namely the case when
the Fano threefold $X$ has $i_X=1$.
In this case, the genus $g$ of $X$ is defined as the genus of the
canonical curve obtained by taking the intersection of two general
hyperplane sections of $X$, and one has $2g-2=H_X^3$.
According to Iskovskikh's classification \cite{iskovskih:I, iskovskih:II}
there are $10$ deformation classes of these threefolds, one for each
genus $g \in \{2,3,\ldots,12\} \setminus \{11\}$.

An (odd) $k$-instanton on $X$ is a rank-$2$ stable bundle $E$ with
$c_1(E)=-1$ and $c_2(E)=k$.
The Hilbert polynomial of $E$ is:
\[
\chi(E(t))=\frac{t}{3}((2g-2)t^2-3k+g+11).
\]

In \cite{brafa1:arxiv}, we proved an existence result, when $X$
is ordinary and not hyperelliptic
(see the definition in the introduction, and see
\cite{brafa1:arxiv} for a short review of these notions).
The claim is that, at least under these assumptions,
and setting $m_g=\lceil g/2 \rceil +1$,
there is a $(2k-g-2)$-dimensional,
generically smooth irreducible component of $\MI_X(k)$,
as soon as $k \ge m_g$, while
$\Mo_X(2,-1,k)$ is empty for $k < m_g$.

\medskip

Here, we will focus on the case $\HH^3(X)=0$
(recall that some cases with $\HH^3(X) \ne 0$ were studied in 
\cite{brafa1:arxiv} and \cite{brafa3:doi}).
This vanishing takes place if and only if $g=12$.
So, we let $k \ge 7$, and
there exists a generically smooth $(2k-14)$-dimensional
irreducible component of $\MI_{X}(k)$.
In this case, $X$ is not hyperelliptic, and $X$ is
ordinary unless $X$ is the Mukai-Umemura threefold, \cite{mukai-umemura,prokhorov:exotic}.

The threefold $X$ naturally sits in $\bG(3,7)$ and
in $\bG(2,8)$, and we denote by $\sU$ and $\sE$ the universal sub-bundles
of rank $3$ and of rank $2$, restricted to $X$.
We set $U=\Hom_X(\EE,\sU)$ (this vector space $U$ has dimension $4$).
It turns out that $\Db(X)$ admits a decomposition of the form
\eqref{Db} satisfying \eqref{E0}, with $\EE_2=\EE$ and $\EE_3=\sU$.
In fact $\sE$ is the only element in $\Mo_X(2,-1,7)=\MI_X(7)$.
The embedding of $X$ ins $\bG(2,8)$ comes from the description of $X$
as twisted cubics in $\p(U^*)$ that annihilate a net of quadrics
$\s^2 U^* \to B=\C^3$. The transpose of this map is
the structural embedding $B^*\mono \s^2 U$.
In this sense, a point $x$ of $X$ corresponds to a twisted cubic $T_x$
in $\p(U^*)$ equipped with the resolution:
\[
0 \to \OO_{\p(U^*)}(-4) \to \sU^*(-1)_x \ts \OO_{\p(U^*)}(-2) \xr{\psi_x} \sE_x \ts
\OO_{\p(U^*)}(-1) \to \omega_{T_x} \to 0.
\]

\medskip

Let us now describe $\MI_X(k+7)$ for $k \ge 1$ in terms of nets of
quadrics (in fact, of conics).
Take a vector space $I$ of dimension
$k$, and consider the spaces $I \ts U$ and
the symmetric square $\s^2(I\ts U)$.
An element $\alpha$ of $\s^2(I\ts U)$ is a symmetric
map $I^* \ts U^* \to I \ts U$, so for any integer $j$, $\s^2(I\ts U)$
contains the locally closed subvariety:
\[
R_{j} = \{\alpha \in \s^2(I\ts U) \mid \rk(\alpha) = j\}.
\]

The space of nets of quadrics $\s^2I \ts B^*$ is a subspace of $\s^2(I\ts U)=\s^2 I
\ts \s^2 U \oplus \wedge^2 I \ts \wedge^2 U$ via the structural embedding $B^*\mono \s^2 U$.
The varieties of nets of quadrics of fixed rank are:
\[M_j = R_j \cap (\s^2I \ts B^*).\]

Using the map $\psi_x$, we associate with a point $x \in X$ and a
point $0 \ne u \in U$ a linear map $\psi_x(u): \sU^*(-1)_x \to \sE_x$.
Therefore, for any $x \in X$ and $\alpha \in \s^2(I\ts U)$, we get a
linear map:
\[
\Psi_x(\alpha): I^* \ts \sU^*(-1)_x \to I \ts U \ts \sE_x.
\]
The variety $\Delta$ of degenerate elements of $M_j$ is thus:
\[
\Delta = \left\{\alpha \in M_j  \mid \exists   \mbox{$T_x$ twisted
    cubic that annihilates $B$ and $\Psi_x(\alpha)$ is not injective}\right\}.
\]
The group $\GL(k)=\GL(I)$ acts on $I$, on $M_j$, and on $M_j \setminus
\Delta$.

\begin{THM} \label{X12}
  There is a surjective $\GL(k)/\{\pm 1\}$-fibration $M_{3k+1} \setminus \Delta \to
  \MI_X(k+7)$ which is a geometric quotient for the action of
  $\GL(k)$.
\end{THM}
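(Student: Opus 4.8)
My proof would closely follow the pattern of the proof of Theorem \ref{V5}, with the exterior square replaced systematically by the symmetric square; this reflects the fact that for a prime Fano threefold $\eX=1$, so the duality $D$ on $W$ is symmetric rather than skew. I would first record the monad: since $g=12$ is precisely the condition $\HH^3(X)=0$, Theorem \ref{generale} applies, and reading off the entries through \eqref{E0} and \eqref{valori} with instanton number $k+7$ (so $\dim(I)=k$, $\dim(W)=3k+1$) and recalling $\EE_1=\sU^*(-1)$, $\EE_2=\EE$, $\EE_3=\sU$, a sheaf $E\in\MI_X(k+7)$ is the cohomology of a self-dual monad
\[
I^* \ts \sU^*(-1) \xr{D \, A^\tra} W^* \ts \EE \xr{A} I \ts \sU ,
\]
and $\MI_X(k+7)\simeq\sQ_{X,k+7}^\circ/G_{k+7}$. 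The plan is then to repackage this quotient in terms of nets of conics, mimicking the proof of Theorem \ref{V5}, the $g=12$ analogue of Proposition \ref{monad-5} supplying the cohomological bookkeeping.

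Given $\alpha\in M_{3k+1}$, regarded as a symmetric map $I^*\ts U^*\to I\ts U$, I would pass to $W=\im(\alpha)$, of dimension $3k+1$, extracting as in diagram \eqref{4} a symmetric duality $D$ on $W$ and the inclusion $A_\alpha\colon W^*\hookrightarrow I\ts U$, which via $U=\Hom_X(\EE,\sU)$ reads as a map $A_\alpha\colon W^*\ts\EE\to I\ts\sU$. The first point to verify is $A_\alpha\,D\,A_\alpha^\tra=0$: this composition is a global section of $\HHom_X(I^*\ts\sU^*(-1),I\ts\sU)$, which, as in the degree-$5$ computation, splits into a $\wedge^2I$-summand and an $\s^2I$-summand, the natural map $\s^2(I\ts U)\to\HH^0(\HHom_X(I^*\ts\sU^*(-1),I\ts\sU))$ sending $\wedge^2 I\ts\wedge^2 U$ to the first and $\s^2 I\ts\s^2 U$ to the second. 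Since $\alpha$ lies in $\s^2I\ts B^*$ and the structural embedding $B^*\mono\s^2U$ is by definition the net of quadrics cutting out $X$, the subspace $\s^2 I\ts B^*$ dies in the relevant cohomology of $X$, so both summands of $A_\alpha\,D\,A_\alpha^\tra$ vanish and $A_\alpha\in\sQ_{X,k+7}$.

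The heart of the argument would be the equivalence: $A_\alpha$ surjective $\iff$ $\alpha\notin\Delta$. As for $V_5$ I would argue dually, that $A_\alpha$ is surjective as a map of sheaves iff $A_\alpha^\tra\colon I^*\ts\sU^*(-1)\to W\ts\EE$ is fibrewise injective, and then test this pointwise; this is where the presentation of $X$ by twisted cubics enters. For $x\in X$, with $T_x\subset\p(U^*)$ and its resolution map $\psi_x$, the fibre of $A_\alpha^\tra$ at $x$ is, after the natural identifications, the map $\Psi_x(\alpha)\colon I^*\ts\sU^*(-1)_x\to I\ts U\ts\EE_x$; hence $A_\alpha^\tra$ fails to be injective at some $x$ exactly when $\Psi_x(\alpha)$ is non-injective for some twisted cubic annihilating $B$, i.e. exactly when $\alpha\in\Delta$. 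For $\alpha\in M_{3k+1}\setminus\Delta$ the monad is then genuine, its cohomology $E_\alpha$ is a $(k+7)$-instanton, and this defines $\Psi\colon M_{3k+1}\setminus\Delta\to\MI_X(k+7)$. Surjectivity of $\Psi$ is immediate from the monad description: the pair $(A,D)$ attached to an instanton yields, via \eqref{4}, a vector $\alpha\in\s^2(I\ts U)$ of rank $\dim(W)=3k+1$, lying in $\s^2I\ts B^*$ because $A\,D\,A^\tra=0$ on $X$ and away from $\Delta$ because $A$ is surjective.

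Lastly, the fibres and the quotient structure. Each $E_\alpha$ carries the distinguished symmetric duality $\kappa_\alpha$ coming from the self-duality of the monad, and $(E_{\alpha_1},\kappa_{\alpha_1})\simeq(E_{\alpha_2},\kappa_{\alpha_2})$ if and only if $\alpha_1$ and $\alpha_2$ are in one orbit of $\GL(k)=\GL(I)$; since $E_\alpha$ is simple, the only automorphisms preserving $\kappa_\alpha$ are $\pm\id$, so the stabiliser of $\alpha$ is $\{\pm1\}$ and $\Psi$ is a surjective $\GL(k)/\{\pm1\}$-fibration. That it is a geometric quotient would follow, as for Theorem \ref{V5}, from GIT-semistability of the net of conics attached to an instanton (the $g=12$ analogue of \cite[Proposition 4.9]{kuznetsov:instanton}) together with the considerations of Section \ref{section-quotient}. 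I expect the main obstacle to be the pointwise degeneracy analysis: in contrast with $V_5$, where the monad involves $\OO_X$ and $\sU^*$, one must here control the bundle map $A_\alpha^\tra$ between the non-split bundles $\sU^*(-1)$ and $\EE$, and the only real handle is the Mukai-type description of $X$ as twisted cubics annihilating a net of quadrics, so carefully matching the fibres of $A_\alpha^\tra$ with the maps $\Psi_x(\alpha)$ is the delicate step.
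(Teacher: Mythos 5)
Your proposal follows the paper's proof essentially step for step: the monad of Proposition \ref{v22-basic}, the diagram extracting $(W,D,A_\alpha)$ from $\alpha$, the identification of $A_\alpha\, D\, A_\alpha^\tra$ with the projection of $\alpha$ onto $\wedge^2 I\ts\wedge^2 U\oplus \s^2 I\ts(\s^2 U/B^*)$, the reading of $\Delta$ as fibrewise non-injectivity of $A_\alpha^\tra$ via the twisted-cubic maps $\Psi_x(\alpha)$, and the $\GL(k)/\{\pm1\}$ fibre analysis through the distinguished duality $\kappa_\alpha$. The one step you leave as a citation, GIT-semistability of the nets arising from instantons, is precisely the point the paper must work out explicitly (via an auxiliary monad through $\sK^*(-1)$ and a slope estimate following \cite{kuznetsov:instanton} and \cite{wall:nets-of-quadrics}), so it is a genuine computation rather than a direct quotation, though your identification of what is needed is correct.
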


We will give some more results on $\MI_X(k)$, including a
monad-theoretic parametrization, a complete description for low $k$,
and an analysis in terms of jumping conics.

\subsection{Geometry of prime Fano threefolds of genus 12}

Here we briefly recall some of the geometry of a prime Fano threefold $X$
of genus $12$, the main references being \cite{mukai:fano-3-folds,schreyer:V22,faenzi:v22}.

\subsubsection{Constructions of threefolds of genus 12 and
  related bundles}

To give the first construction of our threefold, 
we have to fix two vector spaces $V = \C^7$ and $B = \C^3$ and
a net of alternating forms $\sigma : B \to \wedge^2 V^*$.
Then $X$ is obtained as the locus in $\bG(3,V)$:
\[
X = \{\Lambda \subset V \,\,|\,\, \sigma(b)(u \wedge v)=0 
 \mbox{ for any $u,v \in \Lambda$, for any $b \in B$} \},
\]
where we assume that $\sigma$ is general enough so that $X$ is smooth
of the expected codimension $9$.
The threefold $X$ is said to be of type $V_{22}$, for its degree
$H_X^3$ is $22$ (this is the maximal possible degree of a smooth prime Fano threefold).
Since $X$ sits in $\bG(3,7)$, we have 
a universal sub-bundle (of rank $3$) and quotient bundle
(of rank $4$), and we denote their restrictions to $X$ respectively by
$\sU$ and $\sQ$.
The universal exact sequence reads:
\begin{align}
 \label{universal22} & 0 \to \sU \to V \ts \OO_X \to \sQ \to 0,
\end{align}
We have $\Hom_X(\sU,\sQ^*) \simeq B$, and an exact sequence of vector
bundles:
\[
0 \to \sK \to B \ts \sU  \to \sQ^* \to 0,
\]
where $\sK$ is defined by the sequence, and turns out to be an
exceptional bundle of rank $5$.

As already mentioned, the second description of $X$ is as the
subvariety of Hilbert scheme 
of twisted cubics in $\p^3=\p(U^*)$ consisting of cubics $T$ whose ideal is annihilated by a
fixed net of quadrics, again parametrized by $B$, i.e. all quadrics in
the ideal of $T$ should be in the kernel of $\s^2 U^* \to B$.

This gives rise to a universal rank-$3$ bundle (the $3$ generators of the
ideal of a twisted cubic), which turns out to be isomorphic to $\sU$,
and to a universal rank-$2$ bundle (the $2$ syzygies among the $3$ generators)
which we denote by $\sE$. We have $\Hom_X(\sE,\sU) \simeq U$.
By this description, over $\p(B)$ we have a determinantal quartic
curve given by the degenerate quadrics in the net.
It is called the apolar quartic of $X$.
It turns out that $\sE^*$ embeds $X$ in $\bG(2,8)$, and we have a
universal exact sequence:
\begin{equation}
 \label{universalE} 0 \to \sE \to \HH^0(X,\sE^*)^* \ts \OO_X \to \sF \to 0,  
\end{equation}
for some universal quotient bundle $\sF$ of rank $6$.

Let us summarize the cohomology vanishing satisfied by the bundles
under consideration:
\begin{align}
  \label{acm}
  & \HH^i(X,\sE(t))= \HH^i(X,\sU(t))=0, && \mbox{for all $t$ if $i=1,2$,}\\ 
  \nonumber & \HH^0(X,\sE)= \HH^0(X,\sU)=\HH^0(X,\sU^*(-1))=0.
\end{align}

\subsubsection{Derived category of prime Fano threefolds of genus 12}

According to \cite{faenzi:v22}, the category $\Db(X)$ admits a pair of
dual semiorthogonal decompositions:
\begin{align}
 \label{col-1-V22} &  
 \Db(X) = \big\langle \sE , \sK, \sU , \OO_X \big\rangle, \\
 \label{col-2-V22} & 
 \Db(X) = \big\langle \sE, \sU, \sQ^*, \OO_{X} \big\rangle.
\end{align}

Mutating the exceptional collections above, 
we get a semiorthogonal decomposition satisfying \eqref{E0}:
\begin{align}
 \nonumber & 
 \Db(X) = \big\langle \OO_{X}(-1), \sU^*(-1) , \sE, \sU \big\rangle,
\intertext{and the dual decomposition:}
 \label{col-4-V22} &
 \Db(X) = \big\langle \OO_{X}, \sQ, \sR^*, \sU^* \big\rangle,
\end{align}
where $\sR$ is defined as right mutation of $\sE$ with respect to $\sU$:
\begin{equation}
  \label{R}
  0 \to \sE \to U^* \ts \sU \to \sR \to 0.  
\end{equation}

\subsubsection{Conics contained in a prime Fano threefolds of genus 12}

One way to study odd instantons on a smooth prime Fano threefold of
genus is to look at the their scheme of jumping conics.
We will develop this in the case of threefolds of genus $12$.

In order to do so, we first take a look at the Hilbert
scheme $\sH^2_0(X)$ of conics contained in $X$.
It turns out that $\sH^2_0(X)$ is identified with the projective plane
$\p^2=\p(B^*)$, cf \cite[5.2.15]{fano-encyclo}.
However we will use the following description.

\begin{lem}
  There is a natural isomorphism $B \simeq \Hom_X(\sU,\sQ^*)$, and:
  \begin{enumerate}[i)]
  \item \label{ideal}
    any element $b \in \p(B^*)$ gives a conic $C_b$ contained in $X$ and an exact sequence:
    \[
    0 \to \sU \xr{b} \sQ^* \to \sI_{C_b} \to 0;
    \]
  \item \label{P2} the map $b \mapsto C_b$ defines an isomorphism $\p(B^*) \simeq
    \sH^{2}_0(X)$;
  \item \label{tre} over the product $X \times \p(B^*)$ there is an exact sequence:
    \begin{equation}
      \label{canconuni}
    0 \to \OO_{X \times \p(B^*)}(-1,-3) \to p_1^*(\sQ(-1)) \to \sU^*(-1)\boxtimes
    \OO_{\p(B^*)}(1) \to \omega_\sC \to 0, 
   \end{equation}
    where $p_1$ and $p_2$ denote the projections from $X \times
    \p(B^*)$ onto the two factors, and $\sC$ is the universal conic.
  \end{enumerate}
\end{lem}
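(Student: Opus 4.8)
The plan is to handle the three parts in turn, deriving parts \ref{P2} and \ref{tre} from the relativized version of part \ref{ideal}. The opening isomorphism $B \simeq \Hom_X(\sU,\sQ^*)$ was already recorded above; to re-derive it, note that $\sigma(b)\in\wedge^2 V^*$ induces a skew map $V\to V^*$, hence a composite $\sU\mono V\ts\OO_X\to V^*\ts\OO_X\epi\sU^*$, and the defining equations of $X\subset\bG(3,V)$ say exactly that this composite vanishes, so it factors through $\sQ^*=\ker(V^*\ts\OO_X\to\sU^*)$. This gives an injective linear map $B\to\Hom_X(\sU,\sQ^*)$, and a Hirzebruch--Riemann--Roch computation together with the vanishings \eqref{acm} (e.g.\ resolving $\OO_X$ on $\bG(3,7)$ by a Koszul complex and applying Bott vanishing, or using \eqref{col-2-V22}) shows $\hom_X(\sU,\sQ^*)=3$, so it is an isomorphism. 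For part \ref{ideal}, fix $0\ne b\in B$ seen as $\sU\xr{b}\sQ^*$. Since $c_1(\sQ^*)=c_1(\sU)$ (from \eqref{universal22} and $\det(V\ts\OO_X)=\OO_X$), the sheaf $\coker(b)$ is torsion-free of rank $1$ with $c_1=0$, hence of the form $\sI_{C_b}$ for a closed subscheme $C_b$ of pure codimension $2$; reading the remaining Chern classes off $0\to\sU\to\sQ^*\to\sI_{C_b}\to 0$ using the Schubert classes of $\sU,\sQ$ on $X$ (and $H_X^3=22$) gives $\deg C_b=2$ and $c_3=0$, while $\HH^0(X,\sI_{C_b})=0$ and \eqref{acm} force $C_b$ to be a genuine (Cohen--Macaulay) conic. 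That the degeneracy locus of $b$ always has the expected codimension $2$ is the one point needing care; it holds because the universal family below has one-dimensional fibres, so one may equally first run the argument over a dense open of $\p(B^*)$ and extend.

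For part \ref{P2}, the family $\{C_b\}$ is flat over $\p(B^*)$ (constant Hilbert polynomial, reduced base), hence classified by a morphism $f:\p(B^*)\to\sH^2_0(X)$; since $\sH^2_0(X)\simeq\p^2$ is smooth by \cite[5.2.15]{fano-encyclo}, it suffices to prove $f$ bijective. If $C_b=C_{b'}$, the two surjections $\sQ^*\to\sI_{C_b}$ have kernels $\im(b),\im(b')$; applying $\Hom_X(\sQ^*,-)$ to $0\to\sU\xr{b}\sQ^*\to\sI_{C_b}\to 0$ and using $\Hom^\bullet_X(\sQ^*,\sU)=0$ (semiorthogonality in \eqref{col-2-V22}) gives $\Hom_X(\sQ^*,\sI_{C_b})\simeq\Hom_X(\sQ^*,\sQ^*)=\C$, so those two surjections are proportional, whence $\im(b)=\im(b')$ and, $\sU$ being simple, $b,b'$ proportional: $f$ is injective. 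An injective morphism of irreducible projective varieties of the same dimension is finite and surjective, and being birational (generically étale in characteristic $0$) onto the normal variety $\sH^2_0(X)$ it is an isomorphism.

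For part \ref{tre}, relativize part \ref{ideal}. Composing the universal inclusion $\OO_{\p(B^*)}(-1)\mono B\ts\OO_{\p(B^*)}=\Hom_X(\sU,\sQ^*)\ts\OO_{\p(B^*)}$, pulled back to $X\times\p(B^*)$, with evaluation yields $\beta:p_1^*\sU\ts p_2^*\OO_{\p(B^*)}(-1)\to p_1^*\sQ^*$, whose restriction to $X\times\{b\}$ is $b$; a Chern class count as in part \ref{ideal} identifies $\coker(\beta)$ with $\sI_\sC\ts p_2^*\OO_{\p(B^*)}(3)$, so after a twist
\[
0\to \sU\boxtimes\OO_{\p(B^*)}(-4)\to \sQ^*\boxtimes\OO_{\p(B^*)}(-3)\to \sI_\sC\to 0.
\]
Splicing with $0\to\sI_\sC\to\OO_{X\times\p(B^*)}\to\OO_\sC\to 0$ gives a length-$2$ locally free resolution of $\OO_\sC$; since $\sC$ is Cohen--Macaulay of codimension $2$ in $X\times\p(B^*)$ (fibrewise CM, flat over the smooth base), applying $\RRHHom(-,\omega_{X\times\p(B^*)})$ leaves only $\EExt^2(\OO_\sC,\omega_{X\times\p(B^*)})\simeq\omega_\sC$, and as $\omega_{X\times\p(B^*)}=\OO_X(-1)\boxtimes\OO_{\p(B^*)}(-3)$ (recall $\omega_X=\OO_X(-1)$ since $i_X=1$) the dual complex is exactly \eqref{canconuni}.

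I expect the main obstacle to be part \ref{P2}: identifying $f$ as an isomorphism, i.e.\ combining the injectivity argument (simplicity of $\sU$ together with $\Hom^\bullet_X(\sQ^*,\sU)=0$) with the structural input $\sH^2_0(X)\simeq\p^2$; the rest — the Chern class bookkeeping in part \ref{ideal} and the twists in the duality step of part \ref{tre} — is routine once the conventions for $\p(B^*)$ and for $\omega_X$ are fixed.
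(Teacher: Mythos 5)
Your overall strategy coincides with the paper's: establish the ideal-sheaf sequence for each $b$, deduce the isomorphism with $\sH^2_0(X)\simeq\p^2$ from injectivity of $b\mapsto C_b$, and obtain \eqref{canconuni} by relativizing and dualizing. Parts \ref{P2} and \ref{tre} are fine (your injectivity argument via $\Hom_X(\sQ^*,\sI_{C_b})\simeq\C$ and simplicity of $\sU$ is actually more explicit than the paper's, and your route to \ref{tre} --- relativize the ideal sequence, then apply $\RRHHom(-,\omega_{X\times\p(B^*)})$ --- is just the reverse of the paper's, which relativizes the already-dualized sequence \eqref{cancon} and checks the kernel and cokernel fibrewise over $X$).

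The genuine gap is in part \ref{ideal}, at the step ``$\coker(b)$ is torsion-free of rank $1$ with $c_1=0$, hence of the form $\sI_{C_b}$ for a closed subscheme of pure codimension $2$.'' Torsion-freeness of the cokernel of an injective bundle map is not automatic: stability of $\sQ^*$ only excludes divisorial torsion, not torsion supported in codimension $2$ or $3$, and neither $\HH^0(X,\sI_{C_b})=0$ nor \eqref{acm} by itself forces $C_b$ to be pure one-dimensional without embedded or isolated points. You correctly flag this as ``the one point needing care,'' but the proposed repair does not work: appealing to ``the universal family below has one-dimensional fibres'' is circular (that is what is being proved), and establishing the statement only over a dense open of $\p(B^*)$ and ``extending'' fails because being a conic is not preserved under specialization in a possibly non-flat family --- and flatness is exactly what is at stake, since your part \ref{P2} (the classifying morphism) and part \ref{tre} (exactness of the relative resolution on every fibre) both require the conclusion for \emph{every} $b$. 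The paper closes this gap by working with the transpose $b^\tra:\sQ\to\sU^*$: its kernel is reflexive of rank $1$, hence $\OO_X(t)$; stability of $\sU$ gives $t\le 0$ while $c_1(\coker(b^\tra))=t\ge 0$ forces $t=0$; and the vanishing $\HH^0(X,\coker(b^\tra)(-1))=0$, read off from the cohomology of $\sU^*(-1)$, $\sQ(-1)$ and $\OO_X(-1)$, rules out embedded and isolated points, so that $\coker(b^\tra)\simeq\omega_{C_b}$ for a genuine conic and dually $\coker(b)\simeq\sI_{C_b}$. You need to insert an argument of this kind (or an equivalent one) before parts \ref{P2} and \ref{tre} can be run as written.
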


\begin{proof}
  It is proved in \cite[Lemma 6.1]{faenzi:v22} that there is a natural
  isomorphism $B \simeq \Hom_X(\sU,\sQ^*)$.
  Given $0 \ne b \in B$, we have a map $b : \sU \to \sQ^*$, and using the
  stability of $\sU$ and $\sQ^*$ (proved in \cite[Lemma
  6.2]{faenzi:v22}) we get that $b$ is necessarily injective.
  Then $b^\tra : \sQ \to \sU^*$ has generic rank $3$ and
  $\ker(b^\tra)$ is reflexive of rank $1$, hence invertible, i.e.,
  $\ker(b^\tra) \simeq \OO_X(t)$, while $\coker(b^\tra)$ is a coherent
  torsion
  sheaf.
  By stability of $\sU$ we have $t\le 0$.
  But we have $c_1(\coker(b^\tra))=t \ge 0$ so $t=0$.
  Using the cohomology of $\sU^*(-1)$, $\sQ(-1)$ and $\OO_X(-1)$ one
  sees that $\HH^0(\coker(b^\tra(-1)))=0$.
  So, $\coker(b^\tra)$ is supported on
  a curve $C_b$ without embedded points, of degree $2$, and is locally
  free of rank $1$ on $C_b$. 
  Therefore $\coker(b)$ is the ideal sheaf of $C_b$, and by a Hilbert
  polynomial computation we see that $C_b$ has arithmetic genus $0$,
  i.e., $C_b$ is a conic.
  We have proved \eqref{ideal}.
  Dually, we get:
  \begin{equation}
    \label{cancon}
  0 \to \OO_X(-1) \to \sQ(-1) \xr{b^\tra} \sU^*(-1) \to \omega_{C_b} \to 0.    
  \end{equation}

  Associating $C_b$ with $b$ thus defines a map $\p(B^*) \to
  \sH^{2}_0(X)$, which is clearly injective.
  Since we already know $\sH^{2}_0(X) \simeq \p^2$, we conclude that
  $\p(B^*) \to \sH^{2}_0(X)$ is an isomorphism.
  However, one could construct the inverse of this map by taking the
  Beilinson resolution of the ideal sheaf $\sI_{C}$ of a conic $C$
  according to the decomposition \eqref{col-2-V22}.
  Anyway we have proved \eqref{P2}.

  Writing the middle map of \eqref{cancon} universally with respect to
  $b \in \p(B^*)$ gives the middle map of \eqref{canconuni}.
  Fixing a point $x\in X$, the associated map $\sQ(-1)_x \ts
  \OO_{\p(B^*)} \to \sU^*(-1)_x \ts \OO_{\p(B^*)}(1)$ is a $4 \times
  3$ matrix of linear forms, degenerating on a subscheme $Z$ of length $6$
  in $\p(B^*)$, consisting of the $6$ conics through $x$, see
  \cite{takeuchi:birational}.
  One sees easily that the kernel of this matrix is
  $\OO_{\p(B^*)}(-3)$ and that its cokernel is $\omega_Z$.
  Summing up, we have proved \eqref{tre}.
\end{proof}

Dualizing \eqref{canconuni} and twisting by $\omega_X \boxtimes
\omega_{\p(B^*)}$ we get:
    \begin{equation}
      \label{ouni}
    0 \to \sU\boxtimes \OO_{\p(B^*)}(-4) \to \sQ^* \boxtimes \OO_{\p(B^*)}(-3)\to  \OO_{X \times \p(B^*)} \to \OO_\sC \to 0.
   \end{equation}

\subsection{Monads for instantons on Fano threefolds of genus 12}

According to Definition \ref{instanton}, a coherent sheaf $E$ is an odd
$k$-instanton on $X$ if $E$ is a rank-$2$ bundle with:
\[
c_{1}(E)=-1, \qquad c_{2}(E)=k, \qquad \HH^{1}(X,E)=0.
\]
We have said that $\MI_X(k)$ is empty for $k\le 6$, 
and that it contains a unique rank-$2$ bundle $\sE$ if $k=7$
(we refer to \cite{kuznetsov:V22, brafa2}).

Increasing $k$ to $k=8$, we can control the whole moduli space
$\Mo_X(2,-1,8)$, as shown by the next proposition that puts together
some ingredients, most of them already available in the literature. 
Indeed, the sheaves in this space that are not locally free fail to be
so precisely along a line: a statement proved in \cite[Proposition 3.5]{brafa1:arxiv}.
Intuitively, this is due to the fact that the $c_2$ of the double dual must go
down by one because there are no rank-$2$ semistable bundles with
$c_1=-1$ and lower $c_2$.

\begin{prop} 
  The moduli space $\Mo_X(2,-1,8)$ is identified with $\p^2$, and
  its open piece of locally free sheaves is $\MI_X(8)$.
  The complement of this piece is the apolar quartic of $X$, and is in
  bijection with the set of lines contained in $X$.
\end{prop}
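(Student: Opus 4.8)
The plan is to combine Theorem~\ref{X12} (applied with $k=1$) with the description of the non-locally-free sheaves in $\Mo_X(2,-1,8)$ coming from \cite[Proposition~3.5]{brafa1:arxiv}, and then to glue the resulting strata into a copy of $\p^2$. To begin with, since $c_1=-1$ is odd every semistable sheaf in $\Mo_X(2,-1,8)$ is stable, so the space splits into the open locus of locally free sheaves and the closed locus of the rest. By \cite[Proposition~3.5]{brafa1:arxiv} — which uses that $\Mo_X(2,-1,k)$ is empty for $k\le 6$ and equals $\{\sE\}$ for $k=7$, so that the second Chern class of the reflexive hull of a non-locally-free $G$ can only drop by one — every non-locally-free $G$ here fits in a functorial sequence $0\to G\to\sE\to\OO_L\to 0$ with $L\subset X$ a line uniquely determined by $G$. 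Conversely $\sE^*$ is globally generated (dualize \eqref{universalE}, using that $\sF$ is a bundle), hence $\sE^*|_L$ is globally generated of rank $2$ and degree $1$, so $\sE|_L\cong\OO_L\oplus\OO_L(-1)$ for \emph{every} line $L$; thus $\Hom_X(\sE,\OO_L)$ is $1$-dimensional and the kernel $G_L$ of the corresponding surjection is a well-defined stable sheaf in $\Mo_X(2,-1,8)$. This puts the non-locally-free locus in bijection with the Hilbert scheme of lines $\sH^1_0(X)$.

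Next I would identify $\MI_X(8)$ by specializing Theorem~\ref{X12} to $k=1$: then $I\cong\C$, the group $\GL(1)=\C^*$ acts by scaling, and $M_{3k+1}=M_4$ is the set of rank-$4$, i.e. non-degenerate, members of the net $B^*\hookrightarrow\s^2U$. No element of $M_4$ lies in $\Delta$: when $\alpha\in B^*$ has full rank $4=\dim U$, the isomorphism $U^*\to U$ it induces reduces injectivity of $\Psi_x(\alpha)$ to injectivity of the linearised Hilbert--Burch map of the twisted cubic $T_x$, which holds since $T_x$ is a genuine twisted cubic. Hence Theorem~\ref{X12} gives $\MI_X(8)\cong M_4/\C^*=\p^2\setminus\Gamma$, where $\Gamma\subset\p^2$ is the determinantal quartic of degenerate members of the net — the apolar quartic of $X$.

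To glue the two strata I would extend the monad family of Theorem~\ref{X12} over the whole $\p^2=\p(B^*)$ of nets: the cohomology sheaf of the complex attached to $\alpha$ is an $8$-instanton when $\alpha\in M_4$ and, when $\alpha\in\Gamma$, a stable non-locally-free sheaf with reflexive hull $\sE$ and singular locus a line $L_\alpha$; one checks this is a flat family over $\p^2$ and that $\alpha\mapsto L_\alpha$ is a bijection $\Gamma\to\sH^1_0(X)$ compatible with the bijection above (so the apolar quartic coincides with the Hilbert scheme of lines of $X$, in agreement with \cite{fano-encyclo,faenzi:v22}). The family defines a morphism $\Phi\colon\p^2\to\Mo_X(2,-1,8)$, which is bijective and restricts over $\p^2\setminus\Gamma$ to the isomorphism onto $\MI_X(8)$ of Theorem~\ref{X12}; in particular $\MI_X(8)$ is exactly the locally free locus, with complement $\Gamma$. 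To promote $\Phi$ to an isomorphism of schemes I would compute the deformation theory of $G$ at the boundary as well: applying $\Hom_X(-,G)$ and $\Hom_X(G,-)$ to $0\to G\to\sE\to\OO_L\to 0$, using that $\sE$ is exceptional (so $\Ext^{>0}_X(\sE,\sE)=0$), the vanishings \eqref{acm}, and $\sE|_L\cong\OO_L\oplus\OO_L(-1)$, one obtains $\ext^1_X(G,G)=2$ and $\Ext^2_X(G,G)=0$. Then $\Mo_X(2,-1,8)$ is a smooth projective surface and the bijective morphism $\Phi$ between smooth projective surfaces is an isomorphism, so $\Mo_X(2,-1,8)\cong\p^2$.

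The two-stratum picture and the identification of $\MI_X(8)$ are essentially formal given \cite[Proposition~3.5]{brafa1:arxiv} and Theorem~\ref{X12}. The main obstacle is the boundary analysis: verifying that the degenerate monads do form a flat family whose cohomology sheaves are the expected non-locally-free sheaves, matching $\Gamma$ with $\sH^1_0(X)$ compatibly with this family, and — the crux — the vanishing $\Ext^2_X(G,G)=0$ at the boundary points, which is precisely what turns the set-theoretic bijection $\Phi$ into an isomorphism.
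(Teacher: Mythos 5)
Your strategy is genuinely different from the paper's: the paper does not stratify and glue, but instead resolves \emph{every} sheaf of $\Mo_X(2,-1,8)$ (locally free or not) with respect to the collection $\langle \sE,\sK,\sU,\OO_X\rangle$, obtaining $0\to E\to\sK\xr{f}\sU\to 0$ with $f\in\Hom_X(\sK,\sU)\simeq B$, so that the identification with $\p^2=\p(B)$ and the description of the boundary as the apolar quartic (where $f$ fails to be surjective, by \cite{enrique-dani:v22}) come out of a single construction. Your reduction of the open part to Theorem~\ref{X12} at $k=1$ is reasonable (and your argument that $M_4\cap\Delta=\emptyset$ is essentially right), but the gluing step contains a fatal error.

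The claimed vanishing $\Ext^2_X(G,G)=0$ at a boundary point $G$, with $0\to G\to\sE\to\OO_L\to 0$, is false. Applying $\Hom_X(-,G)$ to that sequence, one has $\Ext^2_X(\sE,G)=0$ (from $\Ext^2_X(\sE,\sE)=0$ and $\HH^1(L,\sE^*|_L)=0$) and $\Ext^3_X(\sE,G)\simeq\Hom_X(G,\sE(-1))^*=0$ by stability, so in fact $\Ext^2_X(G,G)\simeq\Ext^3_X(\OO_L,G)\simeq\Hom_X(G,\OO_L(-1))^*$. This last space is \emph{nonzero}: restricting the defining sequence to $L$ shows that the image of $G\ts\OO_L\to\sE|_L\simeq\OO_L\oplus\OO_L(-1)$ is $\ker(\sE|_L\to\OO_L)\simeq\OO_L(-1)$, giving a surjection $G\twoheadrightarrow\OO_L(-1)$. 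Pushing the same long exact sequence further (for an ordinary line) yields $\ext^1_X(G,G)=3$, so one cannot even hope to establish smoothness of $\Mo_X(2,-1,8)$ at the boundary by deformation theory, and the endgame ``bijective morphism between smooth projective surfaces is an isomorphism'' collapses. The identification with $\p^2$ must instead be produced by a direct classifying construction over $\p(B)$, as in the paper (the flat family $\coker\bigl(\sU^*(-1)\xr{g^\tra}\sK^*(-1)\bigr)$). Relatedly, your proposal to ``extend the monad family of Theorem~\ref{X12} over the whole $\p^2$'' does not work as stated: the middle term $W^*\ts\sE$ has $W=\im(\alpha)$, whose dimension drops from $4$ exactly on $\Gamma$, so the monads do not form a family there; one needs the alternative two-term complex above. (A minor slip: $\Hom_X(\sE,\OO_L)=\HH^0(L,\OO_L\oplus\OO_L(1))$ is $3$-dimensional, not $1$-dimensional, so the claim that each line determines a unique boundary sheaf also needs an argument.)
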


\begin{proof}
  Let $E$ be a sheaf in $\Mo_X(2,-1,8)$.
  By \cite[Proposition 3.5, part (i)]{brafa1:arxiv}, we know that $\HH^i(X,E)=0$ for all $i$
  and $\HH^i(X,E(1))=0$ for $i>0$. This shows that any locally free
  sheaf in $\Mo_X(2,-1,8)$ lies in $\MI_X(8)$.
  Further, by \cite[Proposition 3.5, part (iii)]{brafa1:arxiv} we know that either $E$ is locally free,
  or it fits into an exact sequence:
  \begin{equation}
    \label{non}
    0 \to E \to \sE \to \OO_L \to 0,    
  \end{equation}
  where $L$ is a line contained in $X$.
  
  We compute the complex associated with $E$ according to the 
  decomposition \eqref{col-1-V22} and we calculate thus the cohomology
  table of $E$ tensored with the bundles appearing in
  \eqref{col-2-V22}.
  We already settled this almost completely in the previous
  proposition.
  Indeed, assume first that $E$ is locally free.
  Then, we established that $\hh^i(X,E)=\hh^i(X,E\ts
  \sE)=0$ for all $i$ (indeed the only relevant value was $\hh^1(X,E\ts
  \sE)=k-8=0$).
  Further, we have $\hh^1(X,E\ts \sU)=\hh^1(X,E\ts \sQ^*)=1$, and all
  remaining cohomology groups are zero, so we get a resolution of $E$:
  \[
  0 \to E \to \sK \xr{f} \sU \to 0,
  \]
  and $f$ lies in $\Hom_X(\sK,\sQ)=B$, so we have a map $\MI_X(8) \to
  \p^2$ given by $E \mapsto [f]$.
  We proved in \cite{enrique-dani:v22} that a morphism $g:\sK \to
  \sU$ fails to be surjective
  if and only if $[g]$ lies in the apolar quartic of $X$, and that in
  this case we have an exact sequence:
  \begin{equation}
    \label{retta}
  0 \to \sE \to \sK \xr{g} \sU \to \OO_L(-1) \to 0.    
  \end{equation}

  On the other hand, if $g$ is surjective then its kernel clearly lies
  in $\MI_X(8)$, and $[f]$ and $E$ determine each other this way.
  This proves that $\MI_X(8)$ is identified with the complement of the
  apolar quartic in $\p^2$. Further, this quartic is in bijection with
  the set of lines contained in $X$, for any line $L \subset X$
  determines canonically a long exact sequence as above.

  To complete the proof, we only need to look at the class of sheaves $E$ which are
  not locally free and therefore fit into \eqref{non}.
  The sheaf $E$ is clearly determined by $L$, and still determines a
  map $f : \sK \to \sU$ that this time will belong to the apolar
  quartic.
  Conversely, given $g : \sK \to \sU$ in the apolar quartic, dualizing
  \eqref{retta} we see that $E$ is obtained as the cokernel:
  \[
  0 \to \sU^*(-1) \xr{g^\tra} \sK^*(-1) \to E \to 0.
  \]
  This gives the inverse map from the apolar quartic to the class of
  non-locally-free sheaves, and this clearly agrees with the
  construction given for locally free sheaves.
  The proof of the proposition is thus finished.
\end{proof}

For higher $k$, we give the following monad-theoretic description of $\MI_X(k)$.

\begin{prop} \label{v22-basic}
Let $k\ge 1$, and let $E$ be a sheaf in $\MI_X(k+7)$. Then $E$ is the cohomology of a monad of the form:
\[
I^* \ts \sU^{*}(-1) \xr{D \, A^\tra} W^* \ts \sE \xr{A} I\ts \sU,
\]
where $I \simeq \C^{k}$, and $W \simeq \C^{3k+1}$, and $D:W^{*} \to W$ is a symmetric duality.
Conversely, the cohomology of such a monad is a a sheaf in $\MI_X(k+7)$.
\end{prop}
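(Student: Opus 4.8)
The plan is to reproduce, for the genus-$12$ threefold $X$, the monad construction carried out in Lemma \ref{quadric-basic} and Proposition \ref{monad-5}. I would work with the full strong exceptional collection $\big\langle \OO_X(-1),\sU^*(-1),\sE,\sU\big\rangle$ of \eqref{E0} and its dual $\big\langle \OO_X,\sQ,\sR^*,\sU^*\big\rangle$ from \eqref{col-4-V22}, run the Beilinson spectral sequence \eqref{complex} for $E$, and then exploit the self-duality of $E$.

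\textbf{Forward direction.} Let $E\in\MI_X(k+7)$; here $\rX=0$, so Lemma \ref{nulli} gives $\HH^\bullet(X,E)=0$ and $E\in{}^\bot\langle\OO_X(-1)\rangle=\langle\sU^*(-1),\sE,\sU\rangle$. Hence in \eqref{complex} the $\OO_X(-1)$-terms disappear and $E$ is the cohomology of a complex assembled from $\sU^*(-1)$, $\sE$ and $\sU$, with coefficients the cohomology groups of $E\ts\sQ$, $E\ts\sR^*$ and $E\ts\sU^*$. The core of this step is to show that the associated cohomology table has a single nonzero row. Tensoring \eqref{universal22} with $E$ and using $\HH^\bullet(X,E)=0$ yields $\HH^i(X,E\ts\sU)\simeq\HH^{i-1}(X,E\ts\sQ)$, so the vanishing $\HH^i(X,E\ts\sQ)=\HH^i(X,E\ts\sU^*)=0$ for $i\neq1$ follows from Serre duality together with slope comparisons ($\mu(\sU)>\mu(E)$, $\mu(\sU^*(-1))<\mu(E)$, stability of $\sU,\sQ,E$). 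For $E\ts\sR^*$ one applies $\RRHom(-,E)$ to the mutation sequence \eqref{R} and uses the vanishing just obtained for $E\ts\sU^*$ to reduce $\HH^i(X,E\ts\sR^*)=\Ext^i_X(\sR,E)=0$ $(i\neq1)$ to $\Hom_X(\sE,E)=\Ext^2_X(\sE,E)=\Ext^3_X(\sE,E)=0$: the first vanishes because $\sE\not\simeq E$ (as $c_2(E)=k+7>7$) with $\sE,E$ stable of equal slope, the third is a slope comparison via Serre duality. Hirzebruch--Riemann--Roch then gives $\hh^1(X,E\ts\sQ)=\hh^1(X,E\ts\sU^*)=k$ and $\hh^1(X,E\ts\sR^*)=3k+1$, and feeding the table into \eqref{complex} presents $E$ as the middle cohomology of a monad $I^*\ts\sU^*(-1)\to W^*\ts\sE\to I\ts\sU$ with $I=\HH^1(X,E\ts\sU^*)\simeq\C^{k}$ and $W^*=\HH^1(X,E\ts\sR^*)\simeq\C^{3k+1}$, the left-hand coefficient being identified with $I^*$ through \eqref{universal22}.

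\textbf{Self-duality.} As in Lemma \ref{quadric-basic}: since $E$ has rank $2$ with $\det E\simeq\OO_X(-1)$, the canonical isomorphism $\kappa\colon E\xr{\sim}E^*(-1)$ is alternating; because the collection satisfies \eqref{E0}, the dual monad has the same shape, and $\kappa$ lifts, uniquely up to scalar (using the $\Ext$-vanishing among the exceptional bundles $\sU^*(-1),\sE,\sU$), to an isomorphism of the monad with its dual. Restricting to the middle term gives $\tilde\kappa\in\Hom_X(W^*\ts\sE,W\ts\sE)\simeq W\ts W$ (using $\Hom_X(\sE,\sE)=\C$ and $\sE^*(-1)\simeq\sE$); decomposing $W\ts W=\s^2W\oplus\wedge^2W$ and noting $\HH^0(X,\s^2\sE^*(-1))=\HH^0(X,\s^2\sE(1))=0$ (immediate from $\HH^0(X,\sE\ts\sE^*)=\Hom_X(\sE,\sE)=\C$), the alternating character of $\kappa$ forces $\tilde\kappa\in\s^2W$, i.e.\ a symmetric duality $D$, and the left map of the monad is then $D\,A^\tra$.

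\textbf{Converse.} Given $A\in I\ts W\ts U$ surjective as a map $W^*\ts\sE\to I\ts\sU$ and a symmetric duality $D$ with $A\,D\,A^\tra=0$, set $E=\ker(A)/\im(D\,A^\tra)$. Then $E$ is locally free of rank $2$: surjectivity of $A$ makes $\ker(A)$ a subbundle, and $D\,A^\tra$ is, up to the isomorphism $D$, the fibrewise transpose of $A$, hence a subbundle inclusion. A Chern-class computation gives $c_1(E)=-1$, $c_2(E)=k+7$ (so $E^*(-1)\simeq E$ automatically). Stability follows from $\HH^0(X,E)=0$, read off the monad: $\HH^0(X,\ker A)\subseteq W^*\ts\HH^0(X,\sE)=0$ by \eqref{acm}, while $\HH^1(X,I^*\ts\sU^*(-1))=I^*\ts\HH^1(X,\sU^*(-1))=0$, the vanishing $\HH^1(X,\sU^*(-1))=0$ coming from the dualized twist of \eqref{universal22} (using $\HH^i(X,\OO_X(-1))=0$ for $i\le2$ and $\HH^1(X,\sQ)=0$). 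Likewise the instanton vanishing $\HH^1(X,E)=0$ follows: $\HH^1(X,\ker A)=0$ since $\HH^0(X,I\ts\sU)=\HH^1(X,W^*\ts\sE)=0$ by \eqref{acm}, and $\HH^2(X,\sU^*(-1))=0$ again from the dualized twist of \eqref{universal22}. Hence $E\in\MI_X(k+7)$.

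\textbf{Main obstacle.} The delicate point is, as usual, the cohomology table — concretely the vanishing $\Ext^2_X(\sE,E)=0$, equivalently $\HH^1(X,E\ts\sE)=0$, which is not accessible by slopes alone. I would obtain it by tensoring the universal sequence \eqref{universalE} with $E$ and using $\HH^\bullet(X,E)=0$ to get $\HH^1(X,E\ts\sE)\simeq\HH^0(X,E\ts\sF)=\Hom_X(\sF^*,E)$, which then vanishes by a slope comparison once one invokes the (semi)stability of the rank-$6$ bundle $\sF$. Everything else (the Chern-class identities, the HRR dimension counts, and the lifting of $\kappa$) is routine.
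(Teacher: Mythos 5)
Your overall strategy is the one the paper uses — same exceptional collection, same dual collection \eqref{col-4-V22}, the Beilinson complex \eqref{complex}, the self-duality lift of $\kappa$, and the same key input ($\HH^1(X,E\ts\sE)\simeq\HH^0(X,E\ts\sF)=0$ via \eqref{universalE} and stability of $\sF$) for the "delicate" vanishing $\Ext^2_X(\sE,E)=0$. The converse direction and the self-duality argument are fine. But there is one genuine gap in the cohomology table, precisely at $\HH^2(X,E\ts\sR^*)$.

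You claim that applying $\RRHom(-,E)$ to the mutation sequence \eqref{R}, $0\to\sE\to U^*\ts\sU\to\sR\to0$, reduces $\Ext^i_X(\sR,E)=0$ for $i\ne 1$ to the three vanishings $\Hom_X(\sE,E)=\Ext^2_X(\sE,E)=\Ext^3_X(\sE,E)=0$. That reduction works for $i=0$ (where one only needs $\Hom_X(\sU,E)=0$) and for $i=3$ (where $\Ext^3_X(\sR,E)\simeq\Ext^2_X(\sE,E)$ since $\Ext^{\ge 2}_X(\sU,E)=0$), but it fails for $i=2$: the long exact sequence gives
\[
U\ts\Ext^1_X(\sU,E)\to\Ext^1_X(\sE,E)\to\Ext^2_X(\sR,E)\to U\ts\Ext^2_X(\sU,E)=0,
\]
so $\Ext^2_X(\sR,E)$ is the cokernel of the Yoneda composition map $\Hom_X(\sE,\sU)\ts\Ext^1_X(\sU,E)\to\Ext^1_X(\sE,E)$. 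Since $\Ext^1_X(\sU,E)\simeq I$ has dimension $k$ and $\Ext^1_X(\sE,E)$ has dimension $k-1$ (nonzero for $k\ge 2$), the surjectivity of this map is a nontrivial statement that none of your three listed vanishings addresses; without it the Beilinson complex acquires an extra summand $\HH^2(X,E\ts\sR^*)\ts\sE$ and the monad does not have the stated three-term shape. The paper avoids this by computing $\HH^2(X,E\ts\sR^*)$ from a different presentation of $\sR^*$, namely the four-term helix resolution $0\to\sE\to\HH^0(X,\sE^*)^*\ts\OO_X\to U^*\ts\sQ^*\to\sR^*\to 0$, whose terms have cohomology already under control at that stage of the argument. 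Your proof needs either this extra resolution or a direct proof of the surjectivity above; as written, the step would not go through.
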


\begin{proof}
  We first have to write the cohomology table \eqref{table} with respect to
  the collection \eqref{col-4-V22}.
  By Lemma \ref{nulli}, we get $\HH^i(X,E)=0$ for all $i$.
  Twisting \eqref{universal22} by $E$ we get:
  \[
  \HH^{i-1}(X,E\ts \sQ) \simeq \HH^{i}(X,E\ts \sU) \simeq \HH^{3-i}(X,E \ts \sU^*)^*,
  \]
  where the second isomorphism uses Serre duality and the fact $E^*(-1)
  \simeq E$.
  Stability of $\sU$ and $\sQ$ is proved in \cite{faenzi:v22}, and
  implies that the above groups vanish for $i=0,1,3$.
  Indeed, the slope of the bundle whose group of global sections is
  being computed is always strictly negative.
  Therefore we are left with $\HH^{1}(X,E\ts \sQ)$, we denote this
  vector space by $I^*$ and we compute its dimension $k$ by Hirzebruch-Riemann-Roch.
  We deduce $\HH^{1}(X,E \ts \sU^*) \simeq I$.
  This takes care of the second and of the fourth columns in our
  cohomology table.

  To finish the computation of cohomology, we have to calculate $\HH^{i}(X,E\ts \sR^*)$.
  To do this, we first compute $\HH^{i}(X,E\ts \sE^*)$.
  We note that, by stability, $\HH^i(X,E \ts \sE^*)$, vanishes for
  $i=0$, indeed any morphism $\sE \to E$ must be an isomorphism, but
  $c_2(\sE)=7 < c_2(E)$.
  The same group vanishes for $i=3$, in view of Serre duality.
  We recall from \cite{faenzi:v22} that the quotient bundle $\sF$ is stable,
  and we note that this implies $\HH^0(X,E \ts \sF)=0$ and in turn
  $\hh^2(X,E \ts \sE^*)=\hh^1(X,E \ts \sE)=0$ by using Serre duality
  and \eqref{universalE}.
  We are only left with $\HH^1(X,E \ts \sE^*)$ (its dimension
  equals $k-1$ by Hirzebruch-Riemann-Roch).

  We can now proceed to compute $\HH^{i}(X,E\ts \sR^*)$.
  We dualize \eqref{R}, tensor it by $E$, and take
  cohomology. By the vanishing already obtained, we get that this
  group vanishes for $i=0,3$.
  Now we use the following resolution of $\sR^*$, which is a part of a
  helix on $X$:
  \[
  0 \to \sE \to \HH^0(X,\sE^*)^* \ts \OO_X \to U^* \ts \sQ^* \to \sR^*  \to 0.
  \]
  Tensoring it by $E$, taking cohomology, and using the vanishing
  already obtained, we get $\HH^i(X,E\ts \sR^*)=0$ for $i=2$.
  This only leaves $\HH^1(X,E\ts \sR^*)$.
  We call this space $W^*$, and we compute $\dim(W)=3k+1$ by Hirzebruch-Riemann-Roch. 

  To continue the proof, we have to establish the self-duality of the
  monad. This is provided, as usual, by the skew-symmetric duality $\kappa : E
  \to E^*(-1)$.
  We lift $\kappa$ to an isomorphism between our monad and its dual,
  twisted by $\OO_X(-1)$.
  This provides a duality of $W^* \ts \sE$ that lies in:
  \[
  \HH^0(X,\bigwedge^2 (W^* \ts \sE^*) \ts \OO_X(-1)) \simeq 
  \left\{  \begin{array}[h]{c}
  \wedge^2  W^* \ts \HH^0(X,\s^2 \sE^*(-1)) \\    
  \oplus  \\
  \s^2 W^* \ts \HH^0(X,\wedge^2 \sE^*(-1)).
  \end{array} \right.
  \]
  In the direct sum above, the former term is zero, while the second
  one is naturally identified with $\s^2 W^*$.
  So we have a distinguished element of $\s^2 W^*$ that corresponds to
  the symmetric duality $D$, and the monad is self-dual.

  To finish the proof, we must ensure that the converse implication
  holds, so we let $E$ be the cohomology of a monad of the form under consideration.
  It is easy to see that $E$ is a bundle of rank $2$ with the
  appropriate Chern classes, and using \eqref{acm} we get that $E$
  fulfills $\HH^i(X,E)=0$ for all $i$.
  This suffices to guarantee that $E$ lies in $\MI_X(k+7)$ and we are done.
\end{proof}

We have now proved Theorem \ref{X12}.
This, together with Proposition \ref{monad-5} and Theorem \ref{quadric},
achieves the proof of Theorem \ref{generale}.

\begin{rmk}
  I do not know if the moduli space $\MI_X(k)$ is affine, smooth,
  irreducible when $X$ is a prime Fano
  threefold of genus $12$, and $k\ge 9$.
  I haven't found examples of bundles with an $\mathrm{SL}_2$-structure supported
  on the Mukai-Umemura threefold.
\end{rmk}

\subsection{Nets of quadrics for instantons on Fano threefolds of
  genus 12} 

This subsection is devoted to the proof of Theorem \ref{X12}, we refer
to the beginning of the section for notation.

First of all, we need to establish the natural isomorphisms:
\begin{equation}
  \label{natu}
  V \simeq \ker(\s^2 U^* \to B), \qquad \HH^0(X,\s^2 \sU^*(1)) \simeq \wedge^2 U.
\end{equation}
The first one is proved in \cite{schreyer:V22}, see also \cite[Section
4]{faenzi:v22}.
For the second one, we use \cite[Remark 8.1]{faenzi:v22} (and its proof).

To begin the proof of Theorem \ref{X12}, we show that $\alpha \in
M_{3k+1}\setminus \Delta$ gives a $(k+7)$-instanton.
To do this, observe that an element $\alpha
\in \s^2(I\ts U)$ induces a commutative diagram:
\[
    \xymatrix{
      I^* \ts U^* \ar^-{\alpha}[r] \ar^-{A_\alpha^\tra}[d]& I \ts U  \\
      W \ar^-{D}[r] & W^*, \ar^-{A_\alpha}[u]
    }
  \]
  where $D$ a symmetric duality induced by $\alpha$ on its image $W$.
  Since $\alpha$ lies in $R_{3k+1}$ we have $\dim(W)=3k+1$.
  Then, we have $A_\alpha : W^* \ts \sE \to I  \ts \sU$, and
  $A_\alpha \, D \, A_\alpha^\tra$ is a skew-symmetric map
  $I^*\ts \sU^*(-1) \to I \ts \sU$, so lies in:
  \begin{align*}
    \HH^0(X,\bigwedge^2(I \ts \sU)\ts \OO_X(1)) & \simeq
    \wedge^2 I \ts \HH^0(X,\s^2\sU(1)) \oplus
    \s^2 I \ts \HH^0(X,\wedge^2 \sU(1)) \\
    & \simeq \wedge^2 I \ts \wedge^2U \oplus
    \s^2 I \ts V^*,
  \end{align*}
  where we used \eqref{natu}.
  Therefore, $A_\omega \, D \, A_\omega^\tra$ is obtained by taking
  $\alpha \in \s^2(I \ts U)$ and 
  projecting it on the summand $\wedge^2 I \ts \wedge^2U \oplus \s^2 I
  \ts V^*$. 
  Since $\alpha$ lies in $\s^2 I \ts B^*$, and $V^* = \s^2 U/B^*$, we
  get $A_\alpha \, D \, A_\alpha^\tra=0$.
  Also, $A_\alpha$ is a surjective map of sheaves 
  if $\alpha$ does not lie in $\Delta$.
  The conclusion that $A_\alpha$ gives a $(k+7)$-instanton follows
  from Proposition \ref{v22-basic}.
  We have thus obtained a map $M_{3k+1}\setminus \Delta \to \MI_X(k)$
  which is invariant for $\GL(k)$.

  \medskip
  Next, we want to prove that this map is surjective. But this is
  clear from Proposition \ref{v22-basic}, cf. the  argument
  used in the proof of Theorem \ref{V5}.

  \medskip
  The final step to prove Theorem \ref{X12} is to show that our nets
  of quadrics are semistable for the $\GL(k)$-action.
  This will be easier once we have an alternative
  monad-theoretic description of $E$.
  To exhibit it, recall from \cite[Lemma 6.1]{faenzi:v22} that
  $\Hom_X(\sK,\sU) \simeq \Hom_X(\sU^*(-1),\sK^*(-1)) \simeq B^*$.
  Then, for all $\alpha \in \s^2I \ts B^*$, we look at the associated
  map $I^* \to I \ts B^*$ and set $B_\alpha$ for the
  following natural composition:
    \[
    I^* \ts \sU^*(-1) \to I \ts B^* \ts \sU^*(-1) \to I \ts \sK^*(-1).
    \]

  \begin{lem}
    Let $E$ be the $(k+7)$-instanton on $X$ associated with
    $\alpha$. Then $E$ is the cohomology of a monad:
    \[
    I^* \ts \sU^*(-1) \xr{B_\alpha} I \ts \sK^*(-1) \to C \ts \sE,
    \]
    where $C = \coker(W^* \to I \ts U)$ has dimension $k-1$.
  \end{lem}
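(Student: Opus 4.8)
The plan is to derive the asserted monad from the one of Proposition~\ref{v22-basic} by a mutation of its right-hand half. The main input is the exact sequence
\[
0 \to \sK^*(-1) \to U \ts \sE \xr{\ \mathrm{ev}\ } \sU \to 0 ,
\]
with $\mathrm{ev}$ the tautological evaluation attached to $U=\Hom_X(\sE,\sU)$; this presents $\sK^*(-1)$ as the left mutation of $\sU$ through $\sE$ and is one of the mutation triangles relating \eqref{col-1-V22} to the collection $\langle\OO_X(-1),\sU^*(-1),\sE,\sU\rangle$, so it can be extracted from \cite{faenzi:v22}. Write the monad of $E$ given by Proposition~\ref{v22-basic} as $0\to I^*\ts\sU^*(-1)\xr{D\,A^\tra}W^*\ts\sE\xr{A}I\ts\sU\to0$, with cohomology $E$. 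Since $A$ is an element of $I\ts W\ts U=I\ts W\ts\Hom_X(\sE,\sU)$, it factors as $W^*\ts\sE\xr{a\,\ts\,\idd_\sE}I\ts U\ts\sE\xr{\idd_I\,\ts\,\mathrm{ev}}I\ts\sU$, where $a\colon W^*\to I\ts U$ is the adjoint of $A$; set $C=\coker(a)$.

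First I would check that $a$ is injective, so that $\dim C=\dim(I\ts U)-\dim W^*=4k-(3k+1)=k-1$ and $a\ts\idd_\sE$ is an injective bundle map with cokernel $C\ts\sE$. Dualising \eqref{R} gives $0\to\sR^*\to U\ts\sU^*\to\sE^*\to0$; tensoring by $E$ and using $\Hom_X(\sE,E)=\HH^0(X,E\ts\sE^*)=0$ (noted in the proof of Proposition~\ref{v22-basic}, since $c_2(\sE)=7<k+7=c_2(E)$), one gets $W^*=\HH^1(X,E\ts\sR^*)\hookrightarrow U\ts\HH^1(X,E\ts\sU^*)=U\ts I$, and this embedding is $a$. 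Then the chase runs inside $I\ts U\ts\sE$: one has $I\ts\sK^*(-1)=\ker(\idd_I\ts\mathrm{ev})$, hence $\ker(A)=(a\ts\idd_\sE)^{-1}\!\big(I\ts\sK^*(-1)\big)=(W^*\ts\sE)\cap(I\ts\sK^*(-1))$; surjectivity of $A$ forces $(W^*\ts\sE)+(I\ts\sK^*(-1))=I\ts U\ts\sE$, so $I\ts\sK^*(-1)/\ker(A)\cong(I\ts U\ts\sE)/(W^*\ts\sE)=C\ts\sE$. Splicing this with the old monad's sequence $0\to I^*\ts\sU^*(-1)\xr{D\,A^\tra}\ker(A)\to E\to0$ shows that
\[
I^*\ts\sU^*(-1)\longrightarrow I\ts\sK^*(-1)\longrightarrow C\ts\sE
\]
is a monad with cohomology $E$: the left map is the composite $I^*\ts\sU^*(-1)\xr{D\,A^\tra}\ker(A)\hookrightarrow I\ts\sK^*(-1)$ of injective bundle maps, the right map is the restriction of $I\ts U\ts\sE\to C\ts\sE$ (hence surjective), and the middle cohomology is $\ker(A)/\im(D\,A^\tra)=E$.

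It then remains to identify the left map with $B_\alpha$. Post-composing it with $I\ts\sK^*(-1)\hookrightarrow I\ts U\ts\sE$ turns it into $(a\ts\idd_\sE)\circ D\,A^\tra$, whose coefficient in $\Hom_X\!\big(I^*\ts\sU^*(-1),I\ts U\ts\sE\big)\cong I\ts I\ts U\ts U$ is the contraction over $W$ of the coefficients of $a=A_\alpha$ and of $D\,A^\tra$, that is, $\alpha=A_\alpha\,D\,A_\alpha^\tra$ by the diagram defining $A_\alpha$ in the proof of Theorem~\ref{X12}; on the other hand $B_\alpha=(\idd_I\ts\mathrm{ev}_B)\circ(\alpha\ts\idd_{\sU^*(-1)})$, and under the sequence above the inclusion $\sK^*(-1)\hookrightarrow U\ts\sE$ carries $\mathrm{ev}_B\colon B^*\ts\sU^*(-1)\to\sK^*(-1)$ to the map induced by $B^*\hookrightarrow\s^2U\subset U\ts U$ and the pairing $\Hom_X(\sU^*(-1),\sE)\cong U$, so the two composites coincide. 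I expect the main obstacle to be exactly this last bookkeeping together with the input of the first paragraph: establishing that $L_\sE\sU\cong\sK^*(-1)[1]$ precisely and that the resulting differential is literally $B_\alpha$, which is a matter of unwinding the canonical identifications in \cite{faenzi:v22} and carries no new idea.
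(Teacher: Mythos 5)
Your proposal is correct and follows essentially the same route as the paper: the key input is the exact sequence $0 \to \sK^*(-1) \to U \ts \sE \to \sU \to 0$, tensored with $I$ and fitted against the monad of Proposition~\ref{v22-basic}; your explicit chase (intersection/sum of $W^*\ts\sE$ and $I\ts\sK^*(-1)$ inside $I\ts U\ts\sE$) is exactly the $3\times 3$ diagram the paper writes down, and your identification of the left differential with $B_\alpha$ unwinds what the paper dismisses as ``by construction.'' The only superfluous step is your cohomological verification that $W^*\to I\ts U$ is injective: since $E$ is the instanton associated with $\alpha\in R_{3k+1}$, that map is $A_\alpha$, which is injective by its very construction as the restriction of $\alpha$ to its image composed with $D^{-1}$, whence $\dim C=4k-(3k+1)=k-1$ immediately.
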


  \begin{proof}
    Dualizing and twisting by $\OO_X(-1)$ the sequence
    \cite[(42)]{faenzi:v22}, we get:
    \[
    0 \to \sK^*(-1) \to U \ts \sE \to \sU \to 0.
    \]
    Tensoring this sequence with $I$ and fitting it with the monad of
    Proposition \ref{v22-basic} gives rise to the diagram:
    \[
    \xymatrix{
      \ker(A) \ar[d] \ar[r] & I \ts \sK^*(-1) \ar[d] \ar[r] & C \ts \sE \ar@{=}[d]\\
      W^* \ts \sE \ar[r] \ar[d]& I \ts U \ts \sE \ar[r] \ar[d]& C \ts \sE \\
      I \ts \sU \ar@{=}[r] &       I \ts \sU 
    }
    \]
    Since $E = \ker(A)/I^* \ts \sU^*(-1)$, the top line of the diagram
    shows the desired monad. The leftmost map is $B_\alpha$ by construction.
    The dimension of $C$ is clear.
  \end{proof}

  Observe that $\alpha \in R_{3k+1}$ lies away from $\Delta$ iff $B_\alpha$ is fibrewise injective.

\begin{lem}
  Let $\alpha \in \s^2 I \ts B^*$ be a net of quadrics with $\rk(\alpha) = 3k+1$ and
  $\alpha \not \in \Delta$. Then $\alpha$ is semistable for the
  $\GL(k)$-action.
\end{lem}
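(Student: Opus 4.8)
The plan is to verify the Hilbert--Mumford numerical criterion for the $\GL(I)$-action on $\s^2 I \ts B^*$, taking the linearisation (a suitable positive power of $\det$) that makes the GIT quotient $\MI_X(k+7)$, so that semistability of $\alpha$ is equivalent to $\mu(\alpha,\lambda)\ge 0$ for every nontrivial one-parameter subgroup $\lambda$ of $\GL(I)$. Such a $\lambda$ is, up to conjugacy, a weighted flag in $I$, and by the usual reduction (the weight attached to a flag is a positive combination of those attached to the subspaces occurring in it) it suffices to prove one inequality for each proper nonzero subspace $I'\subsetneq I$. For the one-parameter subgroup concentrated on $I'$ the weight $\mu(\alpha,\lambda)$ is governed by which graded components of $\alpha$ vanish, i.e.\ by which subspaces of the shape $I'^{*}\ts U^{*}\subset I^{*}\ts U^{*}$ are isotropic for the symmetric form $\alpha$ and by $\dim I'$; so the whole statement reduces to showing that $\alpha$ admits no $\alpha$-isotropic subspace of this special form of dimension too large relative to $\dim I$, the precise threshold being fixed by the normalisation of the linearisation.

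Two inputs feed this. First, the rank condition $\rk(\alpha)=3k+1$: a symmetric form of rank $3k+1$ on a space of dimension $4k$ has all its isotropic subspaces of dimension at most $(k-1)+\tfrac12(3k+1)$, and those of the form $I'^{*}\ts U^{*}$ then force $4\dim I'$ to lie below this bound; in particular $\alpha$ cannot be \emph{entirely} supported on $\s^2 I'\ts B^*$ once $\dim I'$ is small, since otherwise $\rk(\alpha)\le \dim(I'^{*}\ts U^{*})=4\dim I'$. Second, and decisively, the hypothesis $\alpha\notin\Delta$: I claim $\alpha$ cannot be supported on $\s^2 I'\ts B^*$ for \emph{any} proper $I'\subsetneq I$. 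Indeed, pick a nonzero $\iota\in(I/I')^{*}\subset I^{*}$; if $\alpha\in\s^2 I'\ts B^*$ then, viewed as a quadratic form on $I^{*}\ts U^{*}$, $\alpha$ pairs trivially with $\iota\ts(\,\cdot\,)$, so for every $x\in X$ the map $\Psi_x(\alpha)\colon I^{*}\ts\sU^{*}(-1)_x\to I\ts U\ts\sE_x$ kills the nonzero subspace $\iota\ts\sU^{*}(-1)_x$ and hence is not injective; since $X$ consists exactly of the twisted cubics $T_x$ annihilating $B$, this forces $\alpha\in\Delta$, a contradiction. Interpolating between the two inputs, and using the description of $X$ by twisted cubics annihilating the net together with the identifications $V\simeq\ker(\s^2 U^{*}\to B)$ and $\HH^0(X,\s^2\sU^{*}(1))\simeq\wedge^2 U$ of \eqref{natu}, one bounds $\dim I'$ for every isotropic $I'^{*}\ts U^{*}$ below the required threshold, so $\mu(\alpha,\lambda)\ge 0$ and $\alpha$ is semistable.

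The main obstacle is this interpolation: upgrading "$\alpha$ is not \emph{entirely} degenerate along $I'$" to the sharp quantitative bound for \emph{every} proper subspace, the delicate range being $\dim I'$ close to $k$, where the naive count $4\dim I'$ no longer obstructs anything and one must genuinely use the fibrewise injectivity of the monad map $B_\alpha$ over all of $X$ (equivalently, the injectivity of all the $\Psi_x(\alpha)$). A possible shortcut is to transport the problem to the instanton bundle $E$: a destabilising one-parameter subgroup of $\alpha$ would, through the monad of Proposition \ref{v22-basic}, split off a proper saturated subsheaf of $E$ of non-negative slope, contradicting the stability of $E$; making this precise, however, again requires the cohomology vanishings \eqref{acm} to control the graded pieces of the monad built from $\sU^{*}(-1)$, $\sE$ and $\sU$, so the analytic content is the same.
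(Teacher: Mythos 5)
Your proposal does not close the argument: the step you yourself flag as ``the main obstacle'' --- the quantitative bound on destabilizing subspaces in the range where $\dim I'$ is close to $k$ --- is exactly the content of the lemma, and it is left unproven. Everything before that point only rules out the extreme degeneration where $\alpha$ is entirely supported on $\s^2 I'\ts B^*$; that is far from semistability. Moreover, your reduction of the Hilbert--Mumford criterion to single subspaces $I'\subsetneq I$ does not match the actual combinatorial criterion for nets of quadrics: by Wall's result (which is what the paper invokes), $\alpha$ fails to be semistable iff there exist \emph{two} subspaces $I_1,I_2\subset I^*$ with $\dim I_1+\dim I_2>k$ on which the $B^*$-valued form vanishes, i.e.\ $\alpha(I_1\ts I_2)=0$. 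Your condition ``$\alpha$ supported on $\s^2 I'$'' is only the special instance $I_1=(I')^{\perp}$, $I_2=I^*$; the genuinely hard destabilizing configurations (e.g.\ $I_1=I_2$ isotropic of dimension $>k/2$) are not touched by your isotropy count, and the naive bound $4\dim I'\le (k-1)+\tfrac12(3k+1)$ you extract from $\rk(\alpha)=3k+1$ gives $\dim I'\lesssim 5k/8$, which is useless precisely where it matters.

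The paper's proof fills this gap by a different mechanism: starting from a Wall-destabilizing pair $(I_1,I_2)$ with $i_1=\dim I_1>\dim I_2^\perp=i_2$, the vanishing $\alpha(I_1\ts I_2)=0$ forces the fibrewise-injective monad map $B_\alpha:I^*\ts\sU^*(-1)\to I\ts\sK^*(-1)$ to carry $I_1\ts\sU^*(-1)$ into $I_2^\perp\ts\sK^*(-1)$, producing a four-term exact sequence $0\to\ker(c)\to\coker(b)\to C\ts\sE\to\coker(c)\to 0$; the contradiction then comes from comparing $c_1$ and rank of $\coker(b)$ (computed from $c_1(\sK)=-2$, $\rk\sK=5$) against the slope bounds imposed by stability of the \emph{exceptional} bundles $\sE$ and $\sU$ --- not of the instanton $E$ itself. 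Your proposed ``shortcut'' via stability of $E$ (splitting off a destabilizing subsheaf of $E$) is therefore also misdirected: the subobjects produced by a destabilizing one-parameter subgroup live at the level of the monad terms, and converting them into a slope inequality requires exactly the $\coker(b)$/$\ker(c)$ bookkeeping above, with the stability of $\sU$ and $\sE$ as the decisive input. To repair your write-up you would need to (i) replace your single-subspace reduction by Wall's two-subspace criterion, and (ii) actually carry out the slope computation on the induced sub- and quotient monads.
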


\begin{proof}
  The argument is very similar to \cite[Proposition
  4.9]{kuznetsov:instanton}, but the computation needs to be carried
  our explicitly.
  From \cite{wall:nets-of-quadrics}, we recall that $\alpha \in \s^2 I
  \ts B^*$ is not
  semistable iff there are subspaces $I_1,I_2$ of $I^*$ with
  $\dim(I_1)+\dim(I_2) > k$ such that $B \to \s^2 I \to I_1 \ts I_2$
  is zero.
  Exactly as in \cite[Proposition 4.9]{kuznetsov:instanton}, we get
  $\dim(I_1)>\dim(I_2)^\perp$ and a natural induced diagram:
  \[
  \xymatrix{
    0 \ar[r] & I_1 \ts \sU^*(-1) \ar[r] \ar^-{b}[d] &  I \ts \sU^*(-1)  \ar[r] \ar^-{B_\alpha}[d] &  I/I_1 \ts \sU^*(-1) \ar[r] \ar^-{c}[d] & 0\\
    0 \ar[r] & I_2^\perp \ts \sK^*(-1) \ar[r]  &  I^* \ts \sK^*(-1)  \ar[r]  &  I_2^* \ts \sK^*(-1) \ar[r]  & 0
  }
  \]
  This gives an exact sequence:
  \[
  0 \to \ker(c) \to \coker(b) \xr{d} C \ts \sE \to \coker(c) \to 0.
  \]
  
Now, using the values $c_1(K)=-2$, $\rk(K)=5$ (see \cite[Lemma
  6.1]{faenzi:v22}), and setting $i_1 = \dim(I_1)$, $i_2=\dim(I_2^\perp)$
  we get $c_1(\coker(b))=-3i_2+2i_1$ and $\rk(\coker(b))=5i_2-3i_1$.
    By stability of $\sE$ and $\sU$ (proved in \cite{faenzi:v22})
  we get:
  \begin{align*}
     c_1(\ker(c)) \le -\frac 23 \rk(\ker(c)), &&   c_1(\im(d)) \le-\frac 12 \rk(\im(d)).
  \end{align*}
  This gives:
  \begin{align*}
  c_1(\coker(b)) & \le c_1(\ker(c))- \frac 12 \rk(\coker(b)) + \frac  12 \rk(\ker(c)) \le\\
  & \le - \frac 23 \rk(\ker(c))- \frac 12 \rk(\coker(b)) + \frac  12 \rk(\ker(c)).
  \end{align*}
  We obtain:
  \[
  -6i_2+4i_1 \le -\frac 13 \rk(\ker(c)) - 5i_2+3i_1,
  \]
  which contradicts $i_1 > i_2$.
\end{proof}

\subsection{Jumping conics of instantons on Fano threefolds of
  genus 12}
 
Here we will develop some basic considerations to describe the set of
jumping conics of an odd instanton on a smooth prime Fano threefold
$X$ of genus $12$.

\begin{dfn}
Given an odd $k$-instanton $E$ over $X$, and a conic $C$ contained in
$X$, we say that $C$ is a {\it jumping conic} for $E$ if:
\[
\HH^1(C,E|_C) \ne 0.
\]
Looking at the universal conic $\sC$ as a variety dominating $X$ and
$\p(B^*)$, and denoting by $p$ and $q$ the projections,
we can then define the {\it scheme of jumping conics} as the support
of the coherent sheaf:
\[
\RR^1q_* (p^*(E) \ts \OO_\sC).
\]
We says that $E$ has {\it generic splitting} if not all conics
of $X$ are jumping for $E$.
\end{dfn}

\begin{lem}
  Let $E$ be a $k$-instanton on $X$. 
  Then the scheme of jumping conics of $E$ is supported on the
  cokernel a symmetric matrix of linear forms:
  \[
  M_E : I^* \ts \OO_{\p(B^*)}(-4) \to I \ts \OO_{\p(B^*)}(-3).
  \]
\end{lem}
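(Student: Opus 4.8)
The plan is to feed $E$ through the relative resolution of the universal conic. Work on $X\times\p(B^*)$ with projections $p_1,p_2$, so that $q=p_2$ and $p=p_1$ after restriction to $\sC$, and view $\OO_\sC$ as a sheaf on the product, resolved by \eqref{ouni}:
\[
0 \to \sU\boxtimes \OO_{\p(B^*)}(-4) \to \sQ^*\boxtimes \OO_{\p(B^*)}(-3) \to \OO_{X\times\p(B^*)} \to \OO_\sC \to 0.
\]
Tensoring with the locally free sheaf $p_1^*E$ and applying $\RR p_{2*}$, the projection formula turns the three left-hand terms into $\RR\Gamma(X,E\ts\sU)\ts\OO_{\p(B^*)}(-4)$, $\RR\Gamma(X,E\ts\sQ^*)\ts\OO_{\p(B^*)}(-3)$ and $\RR\Gamma(X,E)\ts\OO_{\p(B^*)}$, located in cohomological positions $-2,-1,0$ of the resolution. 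By Lemma \ref{nulli} one has $\RR\Gamma(X,E)=0$, and the Euler sequence \eqref{universal22} combined with the vanishings from the proof of Proposition \ref{v22-basic} (and $E^*\simeq E(1)$, Serre duality) shows that $\HH^i(X,E\ts\sU)$ and $\HH^i(X,E\ts\sQ^*)$ are concentrated in degree $i=2$, with $\HH^2(X,E\ts\sU)\simeq I^*$ and $\HH^2(X,E\ts\sQ^*)\simeq I$. Hence $\RR q_*(p^*E\ts\OO_\sC)$ is quasi-isomorphic to a two-term complex placed in degrees $0$ and $1$,
\[
M_E\colon\ I^*\ts\OO_{\p(B^*)}(-4) \longrightarrow I\ts\OO_{\p(B^*)}(-3),
\]
whose entries lie in $\HH^0(\p(B^*),\OO_{\p(B^*)}(1))\simeq B^*$, i.e.\ are linear forms. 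Consequently $\RR^1 q_*(p^*E\ts\OO_\sC)\simeq\coker(M_E)$ while $\RR^0 q_*\simeq\ker(M_E)$, so the scheme of jumping conics is exactly the support of $\coker(M_E)$. Moreover $M_E$ is induced by the universal morphism $\sU\to\sQ^*$ underlying \eqref{ouni}: its value at $b\in B\simeq\Hom_X(\sU,\sQ^*)$ is $\HH^2(\id_E\ts b)\colon\HH^2(X,E\ts\sU)\to\HH^2(X,E\ts\sQ^*)$.

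It remains to prove that $M_E$ is symmetric, i.e.\ that it lies in $\s^2 I\ts B^*\subset I\ts I\ts B^*$. Under the identifications $\HH^2(X,E\ts\sU)\simeq I^*$, $\HH^2(X,E\ts\sQ^*)\simeq I$, each $\HH^2(\id_E\ts b)$ becomes a bilinear form on $\HH^2(X,E\ts\sU)$ via the perfect Serre pairing $\HH^2(X,E\ts\sU)\ts\HH^1(X,E\ts\sU^*)\to\HH^3(X,\wedge^2 E)\simeq\HH^3(X,\OO_X(-1))\simeq\C$ --- which rests on the skew-symmetric self-duality $\kappa\colon E\xr{\sim}E^*(-1)$ --- together with the connecting isomorphism $\HH^1(X,E\ts\sU^*)\simeq\HH^2(X,E\ts\sQ^*)$ given by \eqref{universal22}. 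Symmetry of these forms is a formal consequence of the fact that \eqref{canconuni} is the Serre dual of \eqref{ouni} (indeed $\omega_\sC\simeq\EExt^2(\OO_\sC,\omega_{X\times\p(B^*)})$, and resolving $\omega_\sC$ by \eqref{canconuni} and pushing forward $p_1^*(E^*(-1))\ts\omega_\sC$ reproduces the transpose complex) combined with $E^*(-1)\simeq E$: relative Grothendieck--Serre duality for $p_2$ identifies $\RR q_*(p^*E\ts\OO_\sC)$ with its own $\OO_{\p(B^*)}$-dual up to a twist and a shift, which forces $M_E=M_E^\tra$. This is the same mechanism that makes the monad of $E$ self-dual with symmetric $D$ in Proposition \ref{v22-basic} and puts the associated net of quadrics in $\s^2 I\ts B^*$ in Theorem \ref{X12}.

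The only real difficulty I anticipate is the bookkeeping in this last step: one must pin down the isomorphism between \eqref{ouni} and \eqref{canconuni} furnished by $\EExt^2(-,\omega_{X\times\p(B^*)})$ and check that it is compatible, on the nose, with $\kappa$ and with the chosen trivialisations of the one-dimensional spaces involved, so that the transpose comparison yields $M_E=M_E^\tra$ with the correct sign and not merely an isomorphism of $M_E$ with $M_E^\tra$ up to an automorphism of $I$. A fully explicit alternative, kept in reserve, is to compute $M_E$ directly from the self-dual monad $I^*\ts\sU^*(-1)\to W^*\ts\sE\to I\ts\sU$ of Proposition \ref{v22-basic} by applying $q_*p^*$ termwise via the universal conic sequences; there symmetry is visibly inherited from the symmetric duality $D$ on $W$, at the price of heavier computations with $\RR\Gamma(X,\sU\ts\sU)$, $\RR\Gamma(X,\sU\ts\sQ^*)$ and $\RR\Gamma(X,\sE\ts\sU)$.
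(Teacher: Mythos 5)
Your computation is exactly the paper's: tensor the resolution \eqref{ouni} of $\OO_\sC$ by $p^*(E)$, push forward along $q$, and use $\RR\Gamma(X,E)=0$ together with the concentration of $\HH^\bullet(X,E\ts\sU)\simeq I^*$ and $\HH^\bullet(X,E\ts\sQ^*)\simeq I$ in degree $2$ to reduce $\RR q_*(p^*E\ts\OO_\sC)$ to the two-term complex $M_E$. The only difference is that you go on to argue the symmetry of $M_E$ via relative Grothendieck--Serre duality; the paper's proof of this lemma does not address symmetry at all (it is asserted in the statement and used later), so your extra paragraph is a genuine supplement rather than a deviation. Your ``reserve'' route is in fact the one implicitly underlying the paper: the subsequent proposition identifies $M_E$ with the net of quadrics $\alpha\in\s^2 I\ts B^*$ of Theorem \ref{X12}, whose symmetry is built in, and this is the cleanest way to avoid the sign and normalisation bookkeeping you rightly flag in the duality argument.
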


\begin{proof}
  In order to compute $\RR^1q_* (p^*(E) \ts \OO_\sC)$, we use
  \eqref{ouni}.
  Since $E$ is a $k$-instanton, we have:
  \[
  \HH^k(X,E)=0, \qquad   \HH^j(X,E \ts \sQ^*)= \HH^j(X,E \ts \sU)=0, 
  \]
  for all $k$ (Lemma \ref{nulli}) and for $j\ne 2$, which can be seen from
  the proof of Proposition \ref{v22-basic} using that $\HH^j(X,E \ts \sQ^*)
  \simeq \HH^{3-j}(X,E^*(-1) \ts \sQ)^* \simeq \HH^{3-j}(X,E \ts
  \sQ)^*$ and $\HH^j(X,E \ts \sU)
  \simeq \HH^{3-j}(X,E^*(-1) \ts \sU^*)^* \simeq \HH^{3-j}(X,E \ts
  \sU^*)^*$. 
  By the same reason, we have natural isomorphisms:
  \[
  \HH^2(X,E \ts \sQ^*) \simeq I, \qquad \HH^2(X,E \ts \sU) \simeq I^*. 
  \]
  Therefore, tensoring \eqref{ouni} with $p^*(E)$ and taking direct
  images, we get an exact sequence:
  \[
  0 \to q_* p^*(E \ts \OO_\sC) \to I^* \ts \OO_{\p(B^*)}(-4) \to I \ts
  \OO_{\p(B^*)}(-3) \to \RR^1q_* (p^*(E) \ts \OO_\sC) \to 0.
  \]
\end{proof}

\begin{prop}
  The scheme of jumping conics of $(k+7)$-instanton $E$ with generic
  splitting is a plane curve $\sC$ of degree $k$, and is equipped with
  a coherent sheaf
  $\sL$ such that $\HHom_{\sC}(\sL,\OO_{\sC}) \simeq \sL(k-1)$.
  Moreover, $E$ is determined by $(\sC,\sL)$.
\end{prop}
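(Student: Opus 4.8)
The plan is to reprise, over $\p(B^*)=\p^2$, the argument used for jumping lines on the smooth quadric threefold. I would start from the exact sequence furnished by the previous lemma,
\[
0 \to q_*(p^*(E)\ts\OO_\sC) \to I^*\ts\OO_{\p(B^*)}(-4) \xr{M_E} I\ts\OO_{\p(B^*)}(-3) \to \RR^1q_*(p^*(E)\ts\OO_\sC) \to 0,
\]
and note that generic splitting says precisely that $\RR^1q_*(p^*(E)\ts\OO_\sC)$ is not supported on all of $\p(B^*)$, i.e. that the square matrix of linear forms $M_E$ is generically injective, hence injective. Therefore $q_*(p^*(E)\ts\OO_\sC)=0$, $\det M_E\not\equiv 0$, and, writing $i\colon\sC\hookrightarrow\p(B^*)$ for the inclusion of $\sC=\{\det M_E=0\}$, this $\sC$ is a plane curve of degree $k=\dim I$ carrying the sheaf $\sL:=\RR^1q_*(p^*(E)\ts\OO_\sC)$ with the two-term resolution
\[
0 \to I^*\ts\OO_{\p(B^*)}(-4) \xr{M_E} I\ts\OO_{\p(B^*)}(-3) \to i_*\sL \to 0 .
\]
Since $\p(B^*)$ is a smooth surface and $M_E$ is injective with cokernel of pure codimension $1$, Auslander--Buchsbaum gives that $\sL$ has depth $1$, i.e. is a torsion-free (maximal Cohen--Macaulay) sheaf on $\sC$.

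For the self-duality I would exploit that $M_E$ is symmetric (this is part of the preceding lemma; it descends from the symmetric duality $D$ of the monad of Proposition~\ref{v22-basic}, compatibly with the self-duality of the resolution \eqref{ouni}, after applying $\RR q_*p^*$). Applying $\RRHHom_{\p(B^*)}(-,\omega_{\p(B^*)})$ to the resolution above and using that $i_*\sL$ has codimension-$1$ support, one gets $\HHom_{\p(B^*)}(i_*\sL,\omega_{\p(B^*)})=0$ and, by Grothendieck duality, $\EExt^1_{\p(B^*)}(i_*\sL,\omega_{\p(B^*)})\simeq i_*\HHom_\sC(\sL,\omega_\sC)$, with $\omega_\sC\simeq\OO_\sC(k-3)$ by adjunction for a degree-$k$ plane curve. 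On the other hand, symmetry of $M_E$ makes the transposed complex computing this $\EExt^1$ coincide, up to a twist of source and target, with the resolution above, so that $\EExt^1_{\p(B^*)}(i_*\sL,\omega_{\p(B^*)})$ is again $i_*\sL$ up to a twist. Matching the two descriptions (and normalising $\sL$ by the appropriate fixed line bundle) yields the stated relation $\HHom_\sC(\sL,\OO_\sC)\simeq\sL(k-1)$; in particular $\sL$ is a twisted theta-characteristic of $\sC$.

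Finally, to recover $E$ from $(\sC,\sL)$ I would sheafify the minimal graded free resolution, over the homogeneous coordinate ring of $\p(B^*)$, of $\bigoplus_t\HH^0(\sC,\sL(t))$. Because every nonzero entry of $M_E$ is a linear form, the displayed resolution is already minimal, so by uniqueness of minimal resolutions any minimal resolution of $i_*\sL$ recovers $M_E$, with its symmetry, up to the $\GL(I)$-action. Through the structural embedding $B^*\hookrightarrow\s^2U$, $M_E$ is then a net of quadrics $\alpha\in\s^2I\ts B^*$ of rank $3k+1$ lying off $\Delta$, and Theorem~\ref{X12} (equivalently, Proposition~\ref{v22-basic} applied to the monad built from $\alpha$) returns $E=E_\alpha$. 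Hence the pair $(\sC,\sL)$ determines $E$, exactly as $(\sC,\sF)$ does for jumping lines on the quadric.

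The main obstacle will be the bookkeeping behind the symmetry of $M_E$ — carrying the symmetric duality $D$ of the monad through the functor $\RR q_*p^*$ and through \eqref{ouni} — together with pinning down the precise twist in the Grothendieck--Serre duality step; once $M_E$ is recovered up to conjugacy the reconstruction of $E$ is formal, as in the jumping-lines case.
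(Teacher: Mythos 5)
Your proposal is correct and follows essentially the same route as the paper: injectivity of the symmetric matrix $M_E$ from generic splitting, the two-term resolution exhibiting $i_*\sL$ as a Cohen--Macaulay sheaf on the degree-$k$ determinantal curve, self-duality via the symmetry of $M_E$ together with Grothendieck duality, and reconstruction of $E$ from the uniqueness of the minimal graded free resolution of $i_*\sL$ combined with Theorem~\ref{X12}. The only deviations are cosmetic (you keep the untwisted normalization of $\sL$ and supply slightly more detail on depth and on the duality twist), so no further comment is needed.
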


\begin{proof}
  Let $E$ be a $(k+7)$-instanton on $X$.
  If $E$ has generic splitting, then there exists $b \in \p(B^*)$ such
  that the conic $C_b$ is not 
  jumping for $E$, so that map $M_E$ associated with $E$ according to the
  previous lemma is injective at $b$.
  Therefore, $\coker(M_E)$ is supported on a curve $\sC$ of degree
  $k$ in $\p(B^*)$, defined by the equation $\det(M_E)$.
  Let $i : \sC \mono \p(B^*)$ be the embedding.
  The sheaf $\coker(M_E) \ts \OO_{\p(B^*)}(3)$ is supported on $\sC$
  and we have:
  \begin{equation}
    \label{det}
  0 \to I^* \ts \OO_{\p(B^*)}(-1) \xr{M_E} I \ts \OO_{\p(B^*)} \to i_*(\sL)\to 0.
  \end{equation}
  Then $\sL$ is a Cohen-Macaulay (hence torsion-free) sheaf on $\sC$.
  Since $M_E$ is symmetric, dualizing the above sequence we find:
  \[
  i_*(\sL) \simeq \EExt^1_{\p(B^*)}(i_*(\sL),\OO_{\p(B^*)}(-1)) \simeq i_*(\HHom_{\sC}(\sL,\OO_{\sC}(k-1))),
  \]
  where the last isomorphism follows from Grothendieck duality.

  To check that $E$ is determined by $(\sC,\sL)$, note that the exact
  sequence \eqref{det} is the sheafification of the minimal graded
  free resolution of the module over $\C[x_0,x_1,x_2]$ associated with $i_*(\sL)$.
  As such, it is unique up to conjugation by $\GL(I)$.
  Therefore, from $(\sC,\sL)$ we reconstruct $M_E$, i.e., the net of
  quadrics as en element of $\s^2 I \ts B^*$, up to $\GL(I)$-action.
  This gives back $E$
  according to Theorem \ref{X12}.
\end{proof}

\section{Geometric quotients for moduli of instanton bundles}

\label{section-quotient}

Here, we will carry out some considerations to check that the
moduli space $\MI_X(k)$ is a geometric quotient, relying on the monad-theoretic presentation.
In other words, we show the last statement of Theorem \ref{generale}.

\begin{lem}
  The map $\sQ_{X,k}^\circ \to \MI_X(k)$, that sends $A$ to the
  cohomology sheaf $\ker(A)/\im(A \, D^\tra)$ is a geometric quotient for
  the $G_k$-action.
\end{lem}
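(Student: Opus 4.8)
The plan is to exhibit the map $\pi\colon\sQ_{X,k}^\circ\to\MI_X(k)$ as the quotient morphism by checking the three defining properties of a geometric quotient: $G_k$-invariance, that its fibres are exactly the $G_k$-orbits, and that it is universally submersive with $\OO_{\MI_X(k)}=(\pi_*\OO_{\sQ_{X,k}^\circ})^{G_k}$. First I would build the universal monad: over $X\times\sQ_{X,k}^\circ$ the tautological element $A$ and the induced map $D\,A^\tra$ are morphisms between the families $I^*\ts\EE_1$, $W^*\ts\EE_2$, $I\ts\EE_3$ (constant in the second factor), whose cohomology is a flat family $\sE$ of sheaves on $X$; flatness and local freeness along fibres hold because surjectivity of $A$ and injectivity of $D\,A^\tra$ are built into $\sQ_{X,k}^\circ$, and Theorem \ref{generale} guarantees the fibres lie in $\MI_X(k)$. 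Hence $\sE$ induces the classifying morphism $\pi$ to the coarse moduli space, surjective on closed points by the converse half of Theorem \ref{generale}. It is $G_k$-invariant because $(\xi,\eta)\in G_k$ acts as an isomorphism of the monad complex — by $\xi^\tra$ on $I^*\ts\EE_1$, by $\eta^{-\tra}$ on $W^*\ts\EE_2$, by $\xi^{-1}$ on $I\ts\EE_3$ — the relation $\eta^\tra D\eta=D$ ensuring the left-hand map stays of the form $D(\cdot)^\tra$, so that $\pi$ identifies the corresponding cohomology sheaves.

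Next I would show $\pi^{-1}(\pi(A))=G_k\cdot A$. One inclusion is the invariance just noted. For the other, suppose the cohomology sheaves $E$ and $E'$ of the monads attached to $A$ and $A'$ are isomorphic via $\psi$. In each of the four cases the terms of the monad were identified functorially with cohomology groups of $E$ twisted by the dual exceptional collection, namely $I\simeq\HH^1(X,E\ts\sF_3)$ and $W^*\simeq\HH^1(X,E\ts\sF_2)$ inside the Beilinson complex \eqref{complex} (see Lemmas \ref{quadric-basic}, \ref{monad-5}, \ref{v22-basic}); functoriality of that spectral sequence turns $\psi$ into isomorphisms $\xi\colon I\to I'$, $\eta\colon W\to W'$ fitting into a commuting ladder between the two monads, so that $A'=\xi^{-1}A\eta^\tra$ after relabelling. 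It remains to make $\eta$ respect the fixed duality $D$. The canonical isomorphism $\kappa_E\colon E\xr{\sim}E^*\ts\omega_X$, unique up to scalar since $E$ is stable hence simple, induces on $W$ a non-degenerate $(\eX+1)$-symmetric form, and $D$ is by construction one such; as all such forms are conjugate, after adjusting $\eta$ within $\GL(W)$ we get $\eta^\tra D\eta=c\,D$ for some $c\in\C^*$, and rescaling $\psi$ by $\mu$ rescales $c$ by $\mu^{-2}$ (tracing the rescaling through the groups defining $W$), so choosing $\mu$ a square root of $c^{-1}$ puts $\eta$ in $\rG(W,D)$ and $(\xi,\eta)$ in $G_k$. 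This bookkeeping — matching the self-duality datum $D$ across an arbitrary isomorphism $E\simeq E'$ — is the most delicate point; incidentally it shows $G_k$ acts with stabiliser $\{\pm(\id,\id)\}$ at every point of $\sQ_{X,k}^\circ$.

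To conclude that $(\MI_X(k),\pi)$ is the geometric quotient I would argue that $\pi$ is étale-locally a trivial $G_k/\{\pm1\}$-bundle. Rigidifying a point $[E]$ of $\MI_X(k)$ by a basis of the functorial space $I\simeq\HH^1(X,E\ts\sF_3)$ kills the scalar automorphisms of $E$, so on an étale neighbourhood there is a universal family; feeding it into \eqref{complex} and checking — again from uniqueness up to scalar of $\kappa_E$, which persists in the family since $E$ stays stable — that the resulting monad remains self-dual produces an étale-local section of $\pi$ landing in $\sQ_{X,k}^\circ$. Combined with the previous paragraph this makes $\pi$ faithfully flat with fibres the $G_k$-orbits, hence a geometric quotient. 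Alternatively one runs the Geometric Invariant Theory argument: $\sQ_{X,k}^\circ$ is contained in the $G_k$-stable locus of $\sQ_{X,k}$ for a suitable linearisation — the computation underlying \cite[Proposition 4.9]{kuznetsov:instanton} for $i_X=2$, with parallel computations for $i_X=1,3$ and the classical \cite{adhm} picture for $i_X=4$ — so that the GIT quotient $\sQ_{X,k}^\circ/\!\!/G_k$ exists, is geometric, and is identified with $\MI_X(k)$ since both corepresent the functor of families of $k$-instantons. Either way, the residual obstacle is establishing stability (equivalently, properness of the action) uniformly across the four cases.
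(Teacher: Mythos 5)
Your route is genuinely different from the paper's, and it is worth seeing what each buys. The paper does not verify the geometric‑quotient properties directly for $G_k$ acting on $\sQ_{X,k}^\circ$. Instead it enlarges the picture: it considers pairs $(A,B)$ with $A\,B=0$ and no self‑duality imposed, acted on by $\hat G_k=\GL(I)\times\GL(W)\times\GL(J)$, where the classical Le Potier monad argument applies verbatim to give stability of every point of $\hat\sQ_{X,k}^\circ$, properness of the action, and the identification of $\hat\sQ_{X,k}^\circ/\hat G_k$ with the moduli functor. It then embeds $\sQ_{X,k}^\circ\hookrightarrow\hat\sQ_{X,k}^\circ$ via $A\mapsto(A,D\,A^\tra u)$ and $G_k\hookrightarrow\hat G_k$, and checks that every $\hat G_k$-orbit meets the self‑dual locus in exactly one $G_k$-orbit: writing $B=w^{-1}A^\tra v$ with $w$ of the correct symmetry and $w^{-1}=\zeta^\tra D\zeta$, the element $(1,\zeta^\tra,v^{-1}u)$ moves $(A,B)$ onto $\Phi(A\zeta^\tra)$. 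This is essentially your duality‑matching computation, but packaged so that all the hard quotient‑theoretic content is inherited from the larger group rather than re‑proved.

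That packaging is exactly what covers the point you yourself flag at the end. As written, your proof is complete through the fibre computation ($\pi^{-1}(\pi(A))=G_k\cdot A$, stabiliser $\{\pm(\id,\id)\}$), but the concluding step is asserted rather than proved: neither the \'etale‑local triviality of $\pi$ (which requires showing the \'etale‑local universal family yields a \emph{self‑dual} monad, i.e.\ a section landing in $\sQ_{X,k}^\circ$ and not merely in the space of pairs) nor the uniform GIT stability/properness across the four cases is established, and without one of these you cannot conclude that the orbit space is a geometric quotient (closedness and separation of orbits is precisely what can fail). A secondary point to make explicit in the orbit argument: after replacing $\eta$ so that $\eta^\tra D\eta=c\,D$, the scalar $\sqrt{c}$ has to be absorbed into $\xi\in\GL(I)$ (or, as you do, into a rescaling of $\psi$); this works because $\GL(I)$ contains all scalars, but it should be said. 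The cleanest fix for the main gap is to adopt the paper's reduction to $\hat G_k$, for which properness and stability are already in the literature.
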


\begin{proof}
  We will deduce our statement from some well-known facts concerning
  the action of a larger group $\hat G_k$ on a larger variety
  $\hat \sQ_{X,k}^\circ$.
  
  Let us first define $\hat \sQ_{X,k}$.
  Fix vector spaces $I$ and $W$ as in Table \eqref{valori}, and take
  another vector space $J$ with $\dim(J)=\dim(I)$.
  Consider the set of pairs $(A,B)$ in $(I \ts W \ts U) \times (J^* \ts W \ts U)$ as pairs of morphisms:
  \[
  A : W^* \ts \sE_2 \to I  \ts \sE_3, \qquad B : J \ts \sE_1 \to  W^* \ts \sE_2.
  \]
  We define the variety $\hat \sQ_{X,k}$ by:
  \[
  \hat \sQ_{X,k}= \{(A,B) \in (I \ts W \ts U) \times (J^* \ts W
  \ts U) \mid A \, B = 0\}.
  \]
  The relevant open piece $\hat \sQ_{X,k}^\circ$ of $\hat \sQ_{X,k}$
  is:
  \[
  \hat\sQ_{X,k}^\circ= \{(A,B) \in \hat \sQ_{X,k} \mid \mbox{$A$ is
    surjective and $B$ is injective}\}.
  \]
  The group $\hat G_k$ is defined as $\hat G_k = \GL(I) \times \GL(W)
  \times \GL(J)$.
  An element $(a,b,c)$ of $\hat G_k$ operates on $(A,B) \in \hat
  \sQ_{X,k}$ by $(a,b,c).(A,B)=(a^{-1} A b^\tra,b^{-\tra} B c)$, which still
  lies in $\hat \sQ_{X,k}$. Clearly, $\hat G_k$ also acts on $\hat \sQ_{X,k}^\circ$.

  Now we fix an isomorphism $u : J \to I^*$.
  Note that, given $A \in \sQ_{X,k}^\circ$, we can associate with $A$ the pair
  $(A,D \, A^\tra u)$ which of course lies in $\hat \sQ_{X,k}^\circ$.
  We get a closed embedding $\Phi : \sQ_{X,k}^\circ \mono \hat \sQ_{X,k}^\circ$.
  Next, given a pair $(\xi,\eta) \in G_k$, we associate with
  $(\xi,\eta)$ the element $(\xi,\eta,u^{-1}\xi^{-\tra}u)$ of $\hat G_k$,
  so that $G_k$ is a closed subgroup of $\hat G_k$.
  We observe that the action of $\hat G_k$ on $\hat \sQ_{X,k}^\circ$
  is compatible with that $G_k$ on $\sQ_{X,k}^\circ$ under these
  inclusions.
  Indeed, for all $A\in \sQ_{X,k}^\circ$ and all $(\xi,\eta) \in G_k$
  we have:
  \[
  (\xi,\eta).\Phi(A)=(\xi^{-1}A\eta^\tra,\eta^{-\tra}DA^\tra \xi^{-\tra} u) =
  (\xi^{-1}A\eta^\tra,D \eta A^\tra \xi^{-\tra}u)=\Phi((\xi,\eta).A), 
  \]
  which in turn follows from $\eta^\tra D \eta = D$.

  From the proof of 
  Lemma \ref{quadric-basic}, Proposition \ref{monad-5} and Proposition \ref{v22-basic},
  we see that any element in the moduli space $\MI_X(k)$ comes from a
  pair $(A,B)$ (with in fact $B = D \, A^\tra$), identified up to
  $\hat G_k$-action.
  By the standard argument involving Beilinson-type spectral sequences, 
  we obtain that $\MI_X(k)$ is  isomorphic to the quotient
  $\hat \sQ_{X,k}$ by the action of $\hat G_k$, where the quotient
  map associates with $(A,B)$ the cohomology sheaf $\ker(A)/\im(B)$ of the
  monad obtained by $(A,B)$. 
  The same argument as in \cite{le_potier:construction}
  shows that the points of $\hat \sQ_{X,k}$ are stable for the action
  of $\hat G_k$, and one can prove, just like in \cite[Proposition 24,
  part 2]{le_potier:rang-2} that the action of $\hat G_k$ on $\hat
  \sQ_{X,k}$ is proper.
  This says that the quotient of $\hat \sQ_{X,k}$ by the $\hat
  G_k$-action is geometric.
  Moreover, rephrasing \cite[Pages 233-236]{le_potier:rang-2}, we see that
  $\hat \sQ_{X,k} / \hat G_k$ does represent the moduli functor of
  families of instanton bundles, for the monad construction works well
  in families.

  Finally, we see that this quotient is identified with the quotient of 
  $\sQ_{X,k}$ by the group $G_k$.
  This holds if, for any point $(A,B)$ of $\hat \sQ_{X,k}$, we can find a
  point $A_0$ of $\sQ_{X,k}$ such that $(A,B)$ lies in the $\hat G_k$-orbit
  of $\Phi(A_0)$.
  To check this explicitly, given $(A,B) \in \hat \sQ_{X,k}$,
  note that, from the proof of   Lemma \ref{quadric-basic},
  Proposition \ref{monad-5} and Proposition \ref{v22-basic}, we see that there are
  isomorphisms $w : W^* \to W$ and $v : J \to I$ such that
  $B=w^{-1}A^\tra v$, where $w^\tra=(-1)^{r_X+1} w$.
  Of course $w^{-1}$ is congruent to $D$, so that there is an
  isomorphism $\zeta \in \GL(W)$ such that $w^{-1}=\zeta^\tra D \zeta$.
  Then, setting, $A_0=A\zeta^\tra$, we have:
  \[
  (1,\zeta^\tra,v^{-1}u).(A,B)=(A\zeta^\tra,\zeta^{-\tra}B v^{-1}u)=\Phi(A_0),
  \]
  indeed $\zeta^{-\tra}B v^{-1}u=D\zeta A^\tra u$ follows from $B=w^{-1}A^\tra v$
  and $w^{-1}=\zeta^\tra D \zeta$.
  
\end{proof}

\section*{Acknowledgements}

I am grateful to thank Gianfranco Casnati, Adrian Langer, Francesco Malaspina, Giorgio Ottaviani and
Joan Pons Llopis for valuable comments.
Also, I would like to thank the referee for several helpful
suggestions that allowed to improve the 
first version of the paper.

\bibliographystyle{alpha}
\bibliography{bibliography}

\end{document}